\documentclass{article}

\newcommand{\Addresses}{{
		\bigskip
		\footnotesize
		
		\textsc{Department of Mathematics, Technion - Israel Institute of Technology, Haifa, Israel}\par\nopagebreak
		\textit{E-mail address:} \texttt{ofir.gor@technion.ac.il}
}}
\author{Ofir Gorodetsky} 
\date{}
\title{Statistics of entries of matrices from the general linear group over a finite field}

\usepackage[margin=1in]{geometry}
\usepackage[all]{xy}
\usepackage{amssymb}
\usepackage{amsmath}
\usepackage{amsthm}
\usepackage{hyperref}
\usepackage{url}

\theoremstyle{plain}
\newtheorem{thm}{Theorem}[section]
\newtheorem{lem}[thm]{Lemma}  
\newtheorem{prop}[thm]{Proposition}
\newtheorem{cor}[thm]{Corollary}
\newtheorem{prob}{Problem}

\theoremstyle{remark}
\newtheorem{remark}{Remark}

\newcommand{\FF}{\mathbb{F}}
\newcommand{\CC}{\mathbb{C}}

\newcommand{\ZZ}{\mathbb{Z}}
\newcommand{\PP}{\mathbb{P}}
\newcommand{\RR}{\mathbb{R}}
\newcommand{\OO}{\mathbb{O}}

\newcommand{\GLn}{\mathrm{GL}_n(\mathbb{F}_q)}
\newcommand{\GLnp}{\mathrm{GL}_n(\mathbb{F}_p)}
\newcommand{\Mn}{\mathrm{M}_n(\mathbb{F}_q)}
\newcommand{\rank}{\mathrm{rank}}
\newcommand{\Tr}{\mathrm{Tr}}
\newcommand{\Sp}{\mathrm{span}}
\newcommand{\TV}{\mathrm{TV}}

\numberwithin{equation}{section}

\begin{document}

\maketitle
\begin{abstract}
Let $\GLn$ be the group of $n\times n$ invertible matrices over a finite field $\FF_q$. How many matrices in $\GLn$ are there with a subset of their entries prescribed? Motivated by the random matrix theory of classical compact groups, we address this question and various variants, with a focus on asymptotic results that are valid as $n$ tends to infinity. 

Our methods are Fourier-analytic, with the main input being an exponential sum over $\GLn$, evaluated in 1955 by Hodges. Our results turn out to be related to investigations carried out in other fields, including coding theory, universality for $p$-adic matrices and algebraic combinatorics.
\end{abstract}
\section{Introduction}
Let $q$ be a prime power and let $\FF_q$ be the finite field with $q$ elements. Let $\GLn$ be the group of $n\times n$ invertible matrices over $\FF_q$. Let $L_1,\ldots,L_k \colon \Mn \to \FF_q$ be $k$ linearly independent linear functionals, where $\Mn$ is the vector space of $n \times n$ matrices over $\FF_q$. Let $I_1,\ldots,I_k$ be nonempty subsets of $\FF_q$.  In this work we seek to estimate
\begin{equation}\label{eq:mainquantity}
S_{\GLn}((L_i)_{i=1}^{k}, (I_i)_{i=1}^{k}) := \sum_{g \in \GLn} \prod_{i=1}^{k} \mathbf{1}_{L_i(g) \in I_i}
\end{equation}
in the limit as $n$ goes to $\infty$. We shall typically omit the subscript $\GLn$ in $S_{\GLn}$. All implied constants in this paper are absolute. We first observe that an upper bound of the expected order is trivial:
\begin{equation}\label{eq:trivial}
S((L_i)_{i=1}^{k}, (I_i)_{i=1}^{k}) \le \sum_{g \in \Mn} \prod_{i=1}^{k} \mathbf{1}_{L_i(g) \in I_i} = q^{-k}|\Mn| \prod_{i=1}^{k}|I_i|\asymp q^{-k}|\GLn|\prod_{i=1}^{k}|I_i|
\end{equation}
since $|\GLn| = \prod_{i=0}^{n-1} (q^n-q^i)=q^{n^2}\prod_{i=0}^{n-1} (1-q^{i-n}) \asymp q^{n^2}=|\Mn|$.
\subsection{General results}
Our first few results do not make assumptions on the functionals $(L_i)_{i=1}^{k}$ aside from linear independence. The next two theorems concern the case where $(I_i)_{i=1}^{k}$ are singletons.
\begin{thm}\label{thm:basic}
Let $1 \le k \le n^2$. Let 
 $L_1,\ldots,L_k\colon\Mn\to \FF_q$ be linearly independent linear functionals and $x_1,\ldots,x_k \in \FF_q$. We have
\begin{equation}\label{eq:nminus1}
S((L_i)_{i=1}^{k}, (\{x_i\})_{i=1}^{k}) = q^{-k} |\GLn| (1+O(q^{k-n})).
\end{equation}
\end{thm}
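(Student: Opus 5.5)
We will use Fourier analysis on the additive group $\Mn$. Fix a nontrivial additive character $\psi$ of $\FF_q$. Every linear functional on $\Mn$ has the form $g\mapsto \Tr(Ag)$ for a unique $A\in\Mn$, so write $L_i(g)=\Tr(A_i g)$ with $A_1,\ldots,A_k$ linearly independent. Expanding each indicator as $\mathbf{1}_{L_i(g)=x_i}=q^{-1}\sum_{t_i\in\FF_q}\psi(t_i(L_i(g)-x_i))$ gives
\begin{equation*}
S((L_i)_{i=1}^{k}, (\{x_i\})_{i=1}^{k}) = q^{-k}\sum_{t\in\FF_q^k}\psi\Bigl(-\sum_i t_ix_i\Bigr)\sum_{g\in\GLn}\psi(\Tr(A_t g)), \qquad A_t:=\sum_i t_iA_i .
\end{equation*}
The term $t=0$ contributes $q^{-k}|\GLn|$, which is the main term. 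For $t\neq 0$, linear independence gives $A_t\neq 0$. Moreover, the inner sum depends only on $r=\rank A_t$, since $\GLn$ is invariant under left and right multiplication by invertible matrices. So the task reduces to evaluating the exponential sum $K(r):=\sum_{g\in\GLn}\psi(\Tr(Ag))$ for $\rank A=r\ge1$. This is Hodges' sum mentioned in the abstract.

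To compute $K(r)$ directly, take $A=\mathrm{diag}(1,\ldots,1,0,\ldots,0)$ with $r$ ones, so that $\Tr(Ag)=\sum_{j\le r}g_{jj}$. Alternatively, by inclusion--exclusion on the rank of $g$, note that the same sum over all of $\Mn$ vanishes. I expect the known answer to be
\[
K(r)=(-1)^r q^{\binom{r}{2}}\,\frac{|\GLn|}{\prod_{i=n-r+1}^{n}(q^i-1)},
\]
so that $|K(r)|\ll |\GLn|\,q^{\binom r2-\sum_{i=n-r+1}^n i}\asymp |\GLn| q^{-r(n-r)-\binom{r+1}{2}}$. One way to get this is to sum over the first $r$ rows. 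Fix the remaining $n-r$ rows, which span a subspace $W$ of dimension $n-r$. The first $r$ rows must complete $W$ to a basis, and the character sum over such completions factors row by row: row $j$ ranges over $\FF_q^n$ minus a subspace $V_j$ of dimension $n-r+j-1$. The sum over the full space vanishes, so each factor equals minus the sum over $V_j$, which is $-|V_j|$ or $0$ according to whether $V_j\subseteq\{x_j=0\}$. Carrying this out gives the formula above. Even a crude bound $|K(r)|\le |\GLn| q^{-c r(n-r+1)}$ would be enough for our purposes.

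It then remains to bound the error
\[
q^{-k}\sum_{t\ne0}|K(\rank A_t)|\le q^{-k}\sum_{r\ge1} N_r\max_{\rank A=r}|K|,
\]
where $N_r$ is the number of $t\neq0$ with $\rank A_t=r$. This step is the main obstacle: the span of the $A_i$ is an arbitrary $k$-dimensional subspace, so we cannot count its low-rank elements precisely. The crude bound $N_r\le q^k$ always holds. Combined with $|K(1)|\asymp|\GLn|q^{-n}$, it would give the target $q^{k-n}$ for rank $1$, but too much for larger $r$ unless $K(r)$ decays fast enough. Here it does: $q^{-r(n-r)-\binom{r+1}{2}}\le q^{-n}$ for $1\le r\le n$, because $r(n-r)+\binom{r+1}2 = rn-\binom r2\ge n$. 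Hence the error is at most
\[
q^{-k}\cdot q^k\cdot |\GLn|\sum_{r\ge1} \#\{t:\rank A_t=r\}q^{-k}\cdot q^{\,k-rn+\binom r2}\ll q^{-k}|\GLn|\,q^{k-n}.
\]
More carefully, $\sum_{t\ne0}|K(\rank A_t)|\le (q^k-1)\max_{r\ge1}|K(r)|\ll q^k|\GLn|q^{-n}$. Using the exact formula, $\max_r |K(r)|/|\GLn|=\max_r q^{\binom r2}/\prod_{i=n-r+1}^n(q^i-1)$, which is $\ll q^{-n}$ with an absolute constant. Multiplying by $q^{-k}$ yields $S=q^{-k}|\GLn|(1+O(q^{k-n}))$, as claimed.
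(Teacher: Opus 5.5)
This is the paper's argument: expand the indicators in additive characters, use that $\sum_{g\in\GLn}\psi(\Tr(Ag))$ depends only on $\rank A$, keep $t=0$ as the main term, and bound the $q^k-1$ remaining frequencies by $\max_{r\ge1}|K(r)|\ll|\GLn|q^{-n}$. Your ``more carefully'' line does exactly this; the display just before it is off by a factor $q^k$ and should be discarded.

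One slip needs fixing. Hodges' evaluation is $K(r)=(-1)^r q^{\binom n2}\prod_{i=1}^{n-r}(q^i-1)=(-1)^r|\GLn|/\prod_{i=n-r+1}^{n}(q^i-1)$, without your extra factor $q^{\binom r2}$; for $n=r=2$ the sum is $q$, not $q^2$. This corrected value is what your row-by-row computation actually produces, namely $\prod_{j=1}^{r}(-q^{n-r+j-1})\cdot|\mathrm{GL}_{n-r}(\FF_q)|$ with $W$ forced to be $\{x_1=\cdots=x_r=0\}$. The conclusion is unaffected for two reasons. Your formula only overestimates $|K(r)|$. Also, the exponent you end up using, $-r(n-r)-\binom{r+1}{2}=-rn+\binom r2$, is in fact the true one; it is not $\asymp$ to $\binom r2-\sum_{i=n-r+1}^n i$, contrary to what you wrote.
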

Theorem~\ref{thm:basic} gives an asymptotic count for the number of invertible matrices with $k$ fixed linear functionals, as long as $n-k \to \infty$. We may interpret \eqref{eq:nminus1} probabilistically. Let $g=g_n$ be a matrix chosen uniformly at random from $\GLn$. Let $(U_1,\ldots,U_k)$ be $k$ i.i.d.~random variables uniformly distributed on $\FF_q$. Then \eqref{eq:nminus1} is the same as
\begin{equation}\label{eq:prob}
	\PP( L_i(g)=x_i\, \forall 1 \le i \le k) -\PP( U_i=x_i\, \forall 1 \le i \le k) \ll q^{-n}.
\end{equation}
Our next result is a \textit{total variation} bound. Recall that given two  random variables $X$ and $Y$ taking values in a finite set $\Omega$, the total variation distance between $X$ and $Y$ is
\begin{equation}\label{eq:dtv}
d_{\TV}(X,Y) := \sup_{A \subseteq \Omega} | \PP(X \in A) -\PP(Y\in A)|=\frac{1}{2}\sum_{x \in \Omega} | \PP(X=x) - \PP(Y=x)|
\end{equation}
where the second equality is standard \cite[Eq.~(3.7)]{ABR}. Summing \eqref{eq:prob} over every choice of $(x_i)_{i=1}^{k}\in \FF_q^k$ we immediately deduce
\[ d_{\TV}( (L_i(g))_{i=1}^{k},(U_i)_{i=1}^{k})\ll q^{k-n}. \]
However, one can obtain a better bound, as shown in the following theorem.
\begin{thm}\label{thm:dtv}
Let $g=g_n$ be a matrix chosen uniformly at random from $\GLn$. Let $1\le k \le 2n$.  Let $L_1,\ldots,L_k\colon \Mn\to \FF_q$ be linearly independent linear functionals and $(U_i)_{i=1}^{k}$ be i.i.d.~random variables uniformly distributed on $\FF_q$. We have
\[ d_{\TV}( (L_i(g))_{i=1}^{k},(U_i)_{i=1}^{k})\ll q^{\frac{k}{2}-n}. \]
\end{thm}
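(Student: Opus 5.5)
We bound the total variation distance through Cauchy--Schwarz and Parseval on $\FF_q^k$. Fix a nontrivial additive character $\psi$ of $\FF_q$. Write $\mu(x)=\PP(L_i(g)=x_i\ \forall i)$ and $\nu(x)=q^{-k}$. Then
\[ 2d_{\TV}=\sum_{x\in\FF_q^k}|\mu(x)-\nu(x)| \le q^{k/2}\Bigl(\sum_{x}|\mu(x)-\nu(x)|^2\Bigr)^{1/2} = \Bigl(\sum_{a\in\FF_q^k\setminus\{0\}} |\widehat{\mu}(a)|^2\Bigr)^{1/2}, \]
where $\widehat{\mu}(a)=\mathbb{E}\,\psi\bigl(\sum_i a_iL_i(g)\bigr)$. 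The term $a=0$ cancels because $\widehat\nu(a)=\mathbf 1_{a=0}$.

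Each functional can be written as $\sum_i a_iL_i(g)=\Tr(A g)$ for a unique matrix $A=A(a)\in\Mn$. By linear independence, $a\mapsto A(a)$ is an injective linear map, so as $a$ ranges over nonzero vectors, $A$ ranges over the nonzero elements of a $k$-dimensional subspace $V\subseteq\Mn$. Hodges' evaluation of the exponential sum $\sum_{g\in\GLn}\psi(\Tr(Ag))$ shows that it depends only on $r=\rank A$. Normalized by $|\GLn|$, it has absolute value $\ll q^{-rn}$ up to lower order; concretely, for $r=1$ it equals $-1/(q^n-1)$ up to the sign and normalisation. So the first input I would extract from Hodges is the bound
\[ \bigl|\mathbb{E}\,\psi(\Tr(Ag))\bigr| \ll q^{-r(n-r)}\cdot q^{-\binom{r}{2}}\quad\text{or at least}\quad \ll q^{-rn/2} \]
for $\rank A=r\ge1$. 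It suffices to have something like $\ll q^{-rn+O(r^2)}$ with the dependence on $r$ decaying fast enough.

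Next I would count elements of $V$ by rank. The number of matrices of rank $r$ in all of $\Mn$ is $\asymp q^{2nr-r^2}$. In a $k$-dimensional subspace with $k\le 2n$, however, I want a bound of the shape $\#\{A\in V:\rank A=r\}\le \min(q^k,\,q^{2nr-r^2})$ up to constants. For $r=1$ this gives at most $q^k$ terms, each of size $q^{-2n}$. Their contribution to $\sum|\widehat\mu|^2$ is therefore $\ll q^{k-2n}$, whose square root is exactly the target $q^{k/2-n}$. For $r\ge 2$ the contribution is at most $q^k\cdot q^{-2rn+O(r^2)}$. For small $r$ this is far smaller than $q^{k-2n}$. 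For large $r$ (comparable to $n$) I would instead use the trivial count $q^{2nr-r^2}$ together with the exact Hodges value, which decays like $q^{-rn}\cdot q^{r^2/2}$ or so. The sum over $r$ then becomes a convergent geometric series dominated by $r=1$.

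The main obstacle is this last step: pinning down the precise size of Hodges' sum for rank $r$ and checking that the sum over $r$ of (count)$\times$(square of the sum) is $\ll q^{k-2n}$ uniformly in $k\le 2n$. I would first try the crude bound $|\widehat\mu(A)|\le q^{-r(n-r)}\cdot C$, which I expect to follow from Hodges' formula (it is a $q$-Krawtchouk-type expression). Combined with the trivial count $\le q^k$, it gives $\sum_{r\ge1} q^{k-2r(n-r)}$. For $r\le n/2$ this is dominated by $r=1$. For $r>n/2$ I would switch to the count $q^{2nr-r^2}$ and use the sharper form of Hodges' sum, which I expect to be of order $q^{-rn+\binom r2}$ at most, so that the terms stay small. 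If this split proves too lossy, a fallback is to exploit that the sum $\sum_{A\in \Mn}|\widehat{\mu}(A)|^2$ over all of $\Mn$ equals $q^{n^2}/|\GLn|\cdot$const by Parseval on $\Mn$. That identity controls the high-rank contribution wholesale, leaving only small ranks to be treated individually.
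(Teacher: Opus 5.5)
Your reduction via Cauchy--Schwarz and Parseval to $\sum_{a\neq 0}|\widehat{\mu}(a)|^2$ is the same as the paper's Lemma~\ref{lem:dtv}. However, the last step, which you yourself flag as the main obstacle, is left open, and the route you sketch for it does not work.

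\begin{itemize}
\item The crude bound $|\widehat{\mu}|\ll q^{-r(n-r)}$ is trivial at $r=n$.
\item For $r>n/2$ you switch to the count $\asymp q^{2nr-r^2}$ of rank-$r$ matrices in all of $\Mn$ and use $|\widehat{\mu}|^2\asymp q^{-2rn+r^2-r}$. Each rank then contributes $\asymp q^{-r}$, so the ranges $r>n/2$ together give order $q^{-\lfloor n/2\rfloor-1}$. This is far larger than the target $q^{k-2n}$ whenever $k$ is below roughly $3n/2$. So your claim that this series is dominated by $r=1$ is false.
\item The Parseval-on-$\Mn$ fallback is no better: it only gives $\sum_{0\neq A\in\Mn}|\widehat{\mu}(A)|^2=q^{n^2}/|\GLn|-1\asymp 1/q$.
\item For $k\le 2n$ one has $q^{2nr-r^2}\ge q^{2n-1}\ge q^{k-1}$ for all $1\le r\le n$. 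So counting ranks in $\Mn$ never improves on the trivial count $q^k$ here.
\end{itemize}

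The missing observation is that Hodges' formula gives the exact modulus
\[ \bigl|\mathbb{E}\,\psi(\Tr(Ag))\bigr| = \frac{q^{\binom{n}{2}}\prod_{i=1}^{n-r}(q^i-1)}{|\GLn|}=\prod_{i=n-r+1}^{n}(q^i-1)^{-1}, \qquad r=\rank A. \]
This is nonincreasing in $r$, so it is at most $(q^n-1)^{-1}$ for \emph{every} nonzero $A$, not just those of rank $1$. There are only $q^k-1$ nonzero $a$, so
\[ \sum_{a\neq 0}|\widehat{\mu}(a)|^2\le \frac{q^k-1}{(q^n-1)^2}\ll q^{k-2n}, \]
and $d_{\TV}\ll q^{k/2-n}$ follows at once. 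This is exactly the paper's argument: the trivial bound $f(r)\le q^k$ combined with the monotonicity of $r\mapsto -nr+\binom{r}{2}$. No splitting by rank and no information about the rank distribution of $V$ is needed. The hypothesis $k\le 2n$ only serves to make the bound nontrivial.
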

More involved results on  $d_{\TV}( (L_i(g))_{i=1}^{k},(U_i)_{i=1}^{k})$ are stated and proved in \S\ref{sec:tv}.

The following is a variant of Theorem~\ref{thm:basic}, in the case where $q$ is a prime and $(I_i)_{i=1}^{k}$ are intervals.
\begin{thm}\label{thm:pv}
Let $1\le k \le n^2$. Suppose $q$ is a prime. Given integers $(x_i)_{i=1}^{k}$ together with positive integers $(h_i)_{i=1}^{k}$ less than $q$, let $I_i=\{ x_i + t \mod q: 0 \le t < h_i\}$. Let $L_1,\ldots,L_k\colon\Mn\to \FF_q$ be linearly independent linear functionals. We have
\[ S((L_i)_{i=1}^{k}, (I_i)_{i=1}^{k}) = \prod_{i=1}^{k} \frac{h_i}{q}|\GLn| \bigg(1+O\bigg( \frac{\prod_{i=1}^{k}\min\{\sqrt{h_i},\log (h_i)+3\}}{\prod_{i=1}^{k}h_i}q^{k-n}\bigg)\bigg). \]
\end{thm}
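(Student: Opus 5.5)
We expand each indicator $\mathbf{1}_{L_i(g)\in I_i}$ in additive characters of $\FF_q=\ZZ/q\ZZ$ and reduce everything to the exponential sum over $\GLn$ (Hodges' evaluation) that already underlies Theorem~\ref{thm:basic}. Write $e_q(x)=e^{2\pi i x/q}$. Then
\[ \mathbf{1}_{L_i(g)\in I_i}=\sum_{a_i\in\FF_q}\widehat{\mathbf{1}_{I_i}}(a_i)\,e_q(a_iL_i(g)),\qquad \widehat{\mathbf{1}_{I_i}}(a)=\frac1q\sum_{t=0}^{h_i-1}e_q(-a(x_i+t)). \]
Multiplying over $i$ and summing over $g$ gives
\[ S=\sum_{a\in\FF_q^k}\prod_{i=1}^k\widehat{\mathbf{1}_{I_i}}(a_i)\sum_{g\in\GLn}e_q\Big(\sum_i a_iL_i(g)\Big). \]
The term $a=0$ is the main term $\prod_i (h_i/q)|\GLn|$.

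For $a\neq0$, the functional $\sum_i a_iL_i$ is nonzero by linear independence, so it equals $g\mapsto \Tr(Ag)$ for a nonzero matrix $A=A(a)$. The inner sum is then the Hodges sum $K(A)=\sum_{g\in\GLn}e_q(\Tr(Ag))$, which depends only on $r=\rank A$. I will use the bound implicit in the proof of Theorem~\ref{thm:basic}:
\[ |K(A)|\ll q^{-n}|\GLn| \]
uniformly over nonzero $A$. Indeed, that proof must produce exactly
\[ \sum_{a\ne0}|K(A(a))|\cdot q^{-k}\ll q^{-n}|\GLn|, \]
and a uniform bound of this form follows from Hodges' formula, since $|K(A)|$ is roughly $q^{n^2-rn}$ up to bounded factors, which is worst at $r=1$. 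If the earlier argument only gives the summed version, I will instead reprove the pointwise bound directly from Hodges' formula.

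Next I bound $\sum_{a\neq0}\prod_i|\widehat{\mathbf{1}_{I_i}}(a_i)|$ by
\[ \prod_i\Big(\sum_{a_i\in\FF_q}|\widehat{\mathbf{1}_{I_i}}(a_i)|\Big) \]
(dropping the restriction $a\ne0$, which only enlarges the sum). For each $i$ I need
\[ \sum_{a\in\FF_q}|\widehat{\mathbf{1}_{I}}(a)|\ll\min\{\sqrt h,\log h+3\}, \qquad h=|I|. \]
The geometric-sum bound $|\widehat{\mathbf 1_I}(a)|\le \min\{h/q,\,1/(2q\|a/q\|)\}$, summed over $a$, gives $O(\log h+1)$. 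Cauchy--Schwarz with Parseval, $\sum_a|\widehat{\mathbf 1_I}(a)|^2=h/q$, gives $\sqrt{q\cdot h/q}=\sqrt h$. The implied constants multiply over $k$ factors, so I will state the per-factor inequality with constant $1$ and adjust the additive constant to make this work. The $\sqrt h$ bound from Cauchy--Schwarz already has constant $1$. For the log bound I need $\sum_a|\widehat{\mathbf 1_I}(a)|\le \log h+3$ exactly, which the standard estimate $\frac{h}{q}+\frac1q\sum_{0<|a|<q/2}\min\{h,\frac{q}{2|a|}\}\le 1+\log(q/h)+\dots$ should deliver.

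Combining the pieces,
\[ |S-\text{main}|\le \max_{A\ne0}|K(A)|\prod_i\min\{\sqrt{h_i},\log h_i+3\}\ll q^{-n}|\GLn|\prod_i\min\{\sqrt{h_i},\log h_i+3\}. \]
Dividing by the main term gives the claimed relative error with the factor $q^{k-n}/\prod_ih_i$.

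The main obstacle is keeping the per-coordinate constants exactly $1$, so that the error does not acquire a factor $C^k$ (the paper's constants are absolute and $k$ can be as large as $n^2$). The log-type $L^1$ bound for the Fourier coefficients of an interval is where the constant must be checked carefully. Two further points need attention. Naively the estimate $\sum_a|\widehat{\mathbf 1_I}(a)|\lesssim \log q$ grows with $q$, so I will check which form of the bound is true. The pointwise Hodges bound must also hold for all nonzero $A$, not just on average.
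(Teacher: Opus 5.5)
Your proposal is correct and is essentially the paper's proof. The paper likewise expands in additive characters, bounds the Hodges sum for every nonzero $A$ via $q^{\binom{n}{2}}\prod_{i=1}^{n-r}(q^i-1)\ll |\GLn|\,q^{-nr+\binom{r}{2}}\le |\GLn|\,q^{-n}$ for $r\ge 1$ (through Lemma~\ref{lem:gen}), drops the restriction $a\neq 0$ to factor the sum, and bounds each factor $q^{-1}\sum_{a}|\widehat{I_i}(a)|$ by $\sqrt{h_i}$ (Cauchy--Schwarz plus Parseval) and by $\log(h_i)+3$, both with constant exactly $1$ as you rightly insist. The one slip is in your logarithmic estimate: after splitting at $|a|\approx q/(2h)$ the tail is $\sum_{q/(2h)<a\le (q-1)/2}1/a\le \log h+O(1)$, not $\log(q/h)$, and tracking constants (using $\lfloor q/(2h)\rfloor\ge q/(4h)$ when $h\le q/2$, and $\sum_{a\le (q-1)/2}1/a\le \log h+1$ when $h>q/2$) gives at most $1+h/q+\log h+\log 2\le \log h+3$, so there is no $\log q$ loss to worry about.
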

The error term in Theorem~\ref{thm:pv} goes to $0$ if e.g.~$h_i\ge \omega^{\frac{1}{k}}q^{1-\frac{n}{k}}(\log(q)+3)$ for all $i$, where $\omega=\omega(n,k,q)\to \infty$. As opposed to Theorem~\ref{thm:basic} where $k$ had to be less than $n$ for an asymptotic, here we can take $k\sim Cn$ for a given $C>1$, as long as the lengths $h_i$ are slightly larger than $q^{1-C^{-1}}$. We may even take $k$ to grow faster than linear in $n$ (as large as $k=n^2$), at the cost of restricting the meaningful range of the other parameters.

Theorem~\ref{thm:pv} is the first result of its kind where $n$ is allowed to grow. Previously,  Ahmadi and Shparlinski \cite[Theorem~9]{Ahmadi}, Hu and Li \cite[Theorem~1.5]{Equi} and Perret-Gentil \cite[Theorem~1.7]{PerretGentil} considered $S((L_i)_{i=1}^{k},(I_i)_{i=1}^{k})$ for $q$ and $I_i$ as in 
Theorem~\ref{thm:pv} in the regime where $q$ tends to $\infty$ while $n$ stays \textit{fixed}, and they had an additional constraint: the $k$ functionals  $(L_i(g))_{i=1}^{k}$ were $(g_{i,j})_{1 \le i,j\le n}$. See \S\ref{sec:largep} for more on the literature, and Theorem~\ref{thm:pvfunc} for an improvement on Theorem~\ref{thm:pv} when $(L_i(g))_{i=1}^{k}=(g_{i,j})_{1 \le i,j\le n}$.

There are cases where Theorem~\ref{thm:basic} is sharp, e.g.~when $L_i(g)= g_{1,i}$ for $i=1,\ldots,k$ ($k\le n$), see \S\ref{sec:sub} below. In many cases, however, one can do better as we now explain.

Each functional $L_i$ can be written as $L_i(g) = \Tr(A_ig)$ for a unique $A_i \in \Mn$. Conversely, any $A_i \in \Mn$ gives rise to a unique functional $g\mapsto \Tr(A_ig)$. The linear independence of $(L_i)_{i=1}^{k}$ is equivalent to the linear independence of $(A_i)_{i=1}^{k}$. We use the notation $A_i$ throughout. As it turns out, the quantity $S((L_i)_{i=1}^{k},(I_i)_{i=1}^{k})$ is largely governed by the rank distribution of the linear combinations of $(A_i)_{i=1}^{k}$, that is, by the values 
\begin{equation}\label{eq:deff}
f(r):=|\{ (c_1,\ldots,c_k) \in \FF_q^k:  \rank(c_1A_1+\ldots+c_kA_k)=r\}|
\end{equation}
as $r$ varies over $1,2,\ldots,n$. Theorems \ref{thm:basic}, \ref{thm:dtv} and \ref{thm:pv} are special cases of the much more refined Lemmas \ref{lem:gensingle}, \ref{lem:dtv} and \ref{lem:gen}, respectively, which are stated in terms of $f$. Theorems \ref{thm:basic}--\ref{thm:pv} are deduced from these lemmas via the trivial bound $f(r) \le q^k$. For certain families of functionals, the function $f$ defined in \eqref{eq:deff} has exact formulas which lead to significantly better results on $S((L_i(g))_{i=1}^{k},(I_i)_{i=1}^{k})$. Next we state stronger results in three natural families of functionals:
\begin{itemize}
	\item Diagonal functionals: $(L_i(g))_{i=1}^{n} = (g_{i,i})_{i=1}^{n}$. 
	\item Submatrix functionals: $(L_i(g))_{i=1}^{m_1m_2} = (g_{i,j})_{1 \le i \le m_1,\, 1\le j \le m_2}$.
	\item A refinement of Theorem~\ref{thm:pv} when $(L_i(g))_{i=1}^{n^2}=(g_{i,j})_{1\le i,j\le n}$.
\end{itemize}
\subsection{Distribution of diagonal}
Recall the falling factorial notation $(x)_n=x(x-1)(x-2)\cdots (x-(n-1))$ with the convention $(x)_0=1$. 
\begin{thm}\label{thm:diag}
Given $x_1,\ldots,x_n \in \FF_q$, let $m = | \{ 1 \le i \le n: x_i = 0\}|$. We have
\begin{equation}\label{eq:Sgiiform}
S((g_{i,i})_{i=1}^{n}, (\{x_i\})_{i=1}^{n}) =q^{\binom{n}{2}-n}\sum_{r=0}^{n}  \binom{n}{r}\prod_{i=1}^{n-r}(q^i-1)  \sum_{j=0}^{\min\{m,r\}}(-q)^j \binom{r}{j} \frac{(m)_j}{(n)_j} .
\end{equation}
\end{thm}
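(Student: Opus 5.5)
The plan is to expand the indicator functions with additive characters and reduce everything to Hodges' exponential sum over $\GLn$. Fix a nontrivial additive character $\psi$ of $\FF_q$. Writing $\mathbf{1}_{g_{i,i}=x_i} = q^{-1}\sum_{c_i\in\FF_q}\psi(c_i(g_{i,i}-x_i))$ and multiplying over $i$ gives
\[
S((g_{i,i})_{i=1}^{n}, (\{x_i\})_{i=1}^{n}) = q^{-n}\sum_{c\in\FF_q^n}\psi\Big(-\sum_{i=1}^n c_ix_i\Big)\,K(\mathrm{diag}(c)),\qquad K(A):=\sum_{g\in\GLn}\psi(\Tr(Ag)).
\]
I will take as input Hodges' evaluation: $K(A)$ depends only on $r=\rank A$, and
\[
K(A)=(-1)^r q^{\binom{n}{2}}\prod_{i=1}^{n-r}(q^i-1).
\]
Two sanity checks support this form. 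At $r=0$ it gives $|\GLn|$. At $r=n$ it gives $(-1)^nq^{\binom n2}$, which is the classical value of $\sum_{g}\psi(\Tr g)$. Pinning down this formula in exactly this normalization from Hodges' paper is the one step I expect to need care; everything after it is elementary combinatorics.

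The rank of $\mathrm{diag}(c)$ is the size of the support $T=\{i:c_i\neq 0\}$. So I group the $c$'s by their support $T$ with $|T|=r$, and the character sum factorizes over the coordinates in $T$. Let $Z=\{i: x_i=0\}$, so $|Z|=m$. For $i\in T$, the sum $\sum_{c_i\neq 0}\psi(-c_ix_i)$ equals $q-1$ if $i\in Z$ and $-1$ otherwise. Hence the contribution of a given $T$ is
\[
(q-1)^{|T\cap Z|}(-1)^{|T\setminus Z|}=(-1)^r(1-q)^{|T\cap Z|}=(-1)^r\sum_{S\subseteq T\cap Z}(-q)^{|S|}.
\]

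Next I interchange the sums over $T$ and $S$. For fixed $S\subseteq Z$ with $|S|=j$, the number of $T\supseteq S$ with $|T|=r$ is $\binom{n-j}{r-j}$. There are $\binom{m}{j}$ such $S$, so the sum over all $T$ of size $r$ equals
\[
(-1)^r\sum_{j}(-q)^j\binom{m}{j}\binom{n-j}{r-j}.
\]
A direct check on factorials gives the identity $\binom{m}{j}\binom{n-j}{r-j}=\binom{n}{r}\binom{r}{j}\frac{(m)_j}{(n)_j}$. The range of $j$ is $0\le j\le\min\{m,r\}$.

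Finally, I multiply by $q^{-n}K$. The sign $(-1)^r$ coming from the character sums cancels the $(-1)^r$ in Hodges' formula. This leaves $q^{\binom n2-n}\binom{n}{r}\prod_{i=1}^{n-r}(q^i-1)\sum_j(-q)^j\binom rj\frac{(m)_j}{(n)_j}$, and summing over $r$ gives \eqref{eq:Sgiiform}.
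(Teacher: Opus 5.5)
Your proof is correct and follows essentially the same route as the paper: the Fourier expansion, Hodges' formula \eqref{eq:Hodges} (which the paper also takes as input), grouping by $r=\rank(\mathrm{diag}(c))$, and a combinatorial evaluation of the resulting character sum. The only cosmetic difference is that you evaluate $\sum_{c_i\neq 0}\psi(-c_ix_i)$ directly and expand $(1-q)^{|T\cap Z|}$ binomially, whereas the paper writes $\mathbf{1}_{a_u\neq 0}=1-\mathbf{1}_{a_u=0}$ and uses full orthogonality; after setting $S=U\setminus T$ the two computations coincide.
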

The left-hand side of \eqref{eq:Sgiiform} counts the number of matrices in $\GLn$ with prescribed diagonal elements. In the special case $(x_1,\ldots,x_n)=\vec{0}$, Theorem~\ref{thm:diag} gives a new proof of a result of Lewis, Liu, Morales, Panova, Sam and Zhang \cite{Lewis}, discussed in \S\ref{sec:Fze} below, which answered a question of R.~Stanley. Explicitly, when $(x_1,\ldots,x_n)=\vec{0}$,  Theorem~\ref{thm:diag} becomes 
\[S((g_{i,i})_{i=1}^{n}, (\{0\})_{i=1}^{n}) =q^{\binom{n}{2}-n}(q-1)^n \sum_{r=0}^{n}(-1)^r \binom{n}{r} \prod_{i=1}^{n-r} \frac{q^i-1}{q-1}.\]
In the case that $x_i \neq 0$ holds for all $1 \le i \le n$, Theorem~\ref{thm:diag} becomes 
\[S((g_{i,i})_{i=1}^{n}, (\{x_i\})_{i=1}^{n}) =q^{\binom{n}{2}-n}\sum_{r=0}^{n} \binom{n}{r} \prod_{i=1}^{n-r} (q^i-1),\]
which is a new result.
\begin{remark}
The inner sum in the right-hand side of \eqref{eq:Sgiiform} is ${}_{2}F_{1}(-m,-r;-n;q)$ where ${}_{2}F_{1}$ is the ordinary hypergeometric function.
\end{remark}
From Theorem~\ref{thm:diag} we deduce an equidistribution result.
\begin{cor}\label{cor:diag}
Given $x_1,\ldots,x_n \in \FF_q$, let $m=|\{1\le i \le n: x_i =0\}|$. We have
\begin{equation}\label{eq:diagpw} S((g_{i,i})_{i=1}^{n}, (\{x_i\})_{i=1}^{n})= q^{-n}|\GLn| (1+O(q^{-n}(qm+n))).
\end{equation}
Moreover, if $g=g_n$ is a matrix chosen uniformly at random from $\GLn$ and $(U_i)_{i=1}^{n}$ are i.i.d.~random variables uniformly distributed on $\FF_q$, we have
\begin{equation}\label{eq:TVdiag} d_{\TV}( (g_{i,i})_{i=1}^{n},(U_i)_{i=1}^{n})\ll \frac{\min\{\sqrt{nq},n\}}{q^n}.
\end{equation}
\end{cor}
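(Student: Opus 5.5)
Since $|\GLn| = q^{n^2}\prod_{i=1}^{n}(1-q^{-i})$, I would first rewrite the main term in the same shape as the formula of Theorem~\ref{thm:diag}:
\[
q^{-n}|\GLn| = q^{\binom{n}{2}-n}\prod_{i=1}^{n}(q^i-1).
\]
This is precisely the $r=0$ term of \eqref{eq:Sgiiform}, where the inner sum equals $1$. So \eqref{eq:diagpw} reduces to showing that the contribution of all terms with $r\ge 1$, divided by $\prod_{i=1}^{n}(q^i-1)$, is $O(q^{-n}(qm+n))$.

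For the $r$-th term, the ratio $\prod_{i=1}^{n-r}(q^i-1)/\prod_{i=1}^{n}(q^i-1)$ is $\asymp q^{-(rn-\binom{r}{2})}$, uniformly in $r$. The inner sum is bounded by
\[
\sum_{j}q^j\binom{r}{j}\Big(\frac{m}{n}\Big)^j\le\Big(1+\frac{qm}{n}\Big)^r .
\]
The $r$-th term is therefore
\[
\ll \binom{n}{r}\,q^{-rn+\binom{r}{2}}\Big(1+\frac{qm}{n}\Big)^r\le \frac{1}{r!}\Big(q^{-n}(n+qm)\Big)^r q^{\binom r2}.
\]
For $r=1$ this is exactly $q^{-n}(n+qm)$. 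For larger $r$ the factor $q^{\binom r2}$ hurts. I would use $\binom{r}{2}\le r(n-1)/2$ together with $n+qm\le 2qn$ to bound the ratio of consecutive terms by $\ll (2qn)q^{-n}q^{(r-1)}/r$. I would rather redo the bound more carefully. Writing the exponent as $-r(n-\frac{r-1}{2})\le -rn/2$, the terms for $r\ge 2$ are
\[
\ll \binom nr \,q^{-rn/2}(1+qm/n)^r\le \frac{(2q n\, q^{-n/2})^r}{r!}.
\]
The $r=2$ term needs a direct check: $n^2q^{-2n+1}(1+qm/n)^2\ll q^{-n}(n+qm)\cdot q^{1-n}(n+qm)$, which is fine when $n+qm\ll q^{n-1}$. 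When $n+qm\gg q^{n}$ the claimed error is $\gg 1$ and, the sum being nonnegative, the bound is trivial except for needing an upper bound, which comes from \eqref{eq:trivial}. So I would split into the regime $q^{-n}(qn)$ small, where the tail forms a rapidly decaying geometric-type series dominated by the $r=1$ term, and the complementary regime, where \eqref{eq:trivial} suffices. I expect the main obstacle to be this bookkeeping of the $q^{\binom r2}$ growth for $r$ near $n$, where $q^{-rn+\binom r2}\approx q^{-n^2/2}$ is still tiny relative to $q^{-n}$ for $n\ge 2$.

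For \eqref{eq:TVdiag}, I would write $P(x)=q^{-n}+q^{-n}E(x)$, where $E(x)$ depends only on $m$, and use the first-order term explicitly: $E(x)=q^{-n}(n-qm)\cdot c+O(q^{-2n}(n+qm)^2)$, with the $r=1$ term computed exactly as $\binom{n}{1}q^{-n+\ldots}(1-qm/n)$. Thus
\[
2d_{\TV}=q^{-n}\sum_{m}\binom nm (q-1)^{n-m}|E|\approx q^{-n}\,\mathbb{E}|n-qM|,
\]
where $M\sim\mathrm{Bin}(n,1/q)$ counts zero diagonal entries, since $\sum_x$ with weight $q^{-n}$ is the uniform expectation. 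This gives $\mathbb{E}|n-qM|=q\,\mathbb{E}|M-n/q|\le q\sqrt{n/q}=\sqrt{nq}$ by Cauchy--Schwarz, and trivially $\le \mathbb{E}(n+qM)=2n$. The remainder contributes $q^{-2n}\mathbb{E}(n+qM)^2\ll q^{-2n}(n^2+qn)$, which is smaller. The resulting bound is $q^{-n}\min\{\sqrt{nq},n\}$.
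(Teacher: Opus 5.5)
Your proposal is correct in substance. For \eqref{eq:diagpw} it is the paper's argument: bound the $r$-th term of \eqref{eq:Sgiiform} and show that the sum over $r\ge1$ is dominated by $r=1$. Your bound on the inner sum is cleaner than the paper's: $(m)_j/(n)_j\le (m/n)^j$ gives the majorant $c_r:=\binom nr q^{-rn+\binom r2}(1+qm/n)^r$, replacing the paper's analysis of $g_{i+1}/g_i$.

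The $-rn/2$ detour (already too lossy at $r=2$) and the regime split are unnecessary. Since $c_{r+1}/c_r=\frac{n-r}{r+1}q^{r-n}(1+qm/n)\le \frac{2(n-r)}{(r+1)q^{n-r-1}}$, the ratio is at most $1$ for every $r$ and at most $1/2$ once $n\ge 12$. Hence $\sum_{r\ge1}c_r\ll c_1=q^{-n}(n+qm)$ uniformly in $n,q$. As drawn, your split would also put $n=1$, $m=0$ in the ``trivial'' regime. There \eqref{eq:trivial} gives only $O(1)$, not the required $O(1/q)$. Since the tail is empty in that case, just drop the split.

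For \eqref{eq:TVdiag} your route differs from the paper's. The paper gets $n/q^n$ by summing \eqref{eq:diagpw} over $x$; your bound $\mathbb{E}|n-qM|\le 2n$ is the same computation. It gets $\sqrt{nq}/q^n$ from the general $L^2$ estimate of Lemma~\ref{lem:dtv} with $f(r)=(q-1)^r\binom nr$. You instead keep the sign of the $r=1$ term $\frac{n-qm}{q^n-1}$ (cf.\ Remark~\ref{rem:bin}), average over uniform $x$, and apply Cauchy--Schwarz to $M\sim\mathrm{Bin}(n,1/q)$. The leading quantities agree, since $\mathrm{Var}(qM)=n(q-1)=f(1)$ is just Parseval on the weight-one frequencies.

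Your version needs a uniform bound on the $r\ge2$ terms, and there you have a slip. The remainder is $O(q^{1-2n}(n+qm)^2)$, not $O(q^{-2n}(n+qm)^2)$: for $m=0$ the $r=2$ term is exactly $\binom n2/((q^n-1)(q^{n-1}-1))$. The correct bound follows from $\sum_{r\ge2}c_r\ll c_2$. After averaging it is $\ll q^{1-2n}(n^2+nq)$, which is still $\ll q^{-n}\min\{\sqrt{nq},n\}$ for $n\ge2$. For $n=1$ there is no remainder, so your conclusion stands.

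As for what each buys: the paper's route is an instance of a lemma valid for arbitrary functionals and uses only the rank distribution $f$. Yours exploits the sign information in the explicit formula. Combined with the standard lower bound $\mathbb{E}|n-qM|\gg\min\{\sqrt{nq},n\}$, it shows that \eqref{eq:TVdiag} is in fact sharp for large $n$, which the paper does not record.
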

See Remark~\ref{rem:bin} for an improvement on \eqref{eq:diagpw} in the regime $m\sim n/q$.

\subsection{Distribution of submatrices}\label{sec:sub}
When the functionals give the entries of a submatrix, we have the following result.
\begin{thm}\label{thm:submatrices}
Let $1\le m_1,m_2 \le n$. Given elements $(x_{i,j})_{1\le i \le m_1, \, 1 \le j \le m_2}$ from $\FF_q$, we have
\begin{equation}\label{eq:entryptprop} S(  (g_{i,j})_{\substack{1\le i \le m_1 \\ 1 \le j \le m_2}}, (\{x_{i,j}\})_{\substack{1 \le i \le m_1 \\ 1 \le j \le m_2}})=q^{-m_1 m_2 }|\GLn| ( 1+O( q^{\min\{m_1+m_2-1-n,0\}})).
\end{equation}
\end{thm}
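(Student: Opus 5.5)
The plan is to use the Fourier expansion that underlies Lemma~\ref{lem:gensingle}. We expand the indicator of $\{g_{i,j}=x_{i,j}\}$ into additive characters, which gives
\[ S = q^{-m_1m_2}\sum_{C\in \FF_q^{m_1\times m_2}} \psi\Big(-\sum_{i,j} c_{i,j}x_{i,j}\Big)\sum_{g\in\GLn}\psi(\Tr(A_C g)). \]
Here $\psi$ is a fixed nontrivial additive character and $A_C\in\Mn$ is the matrix whose nonzero block, of size $m_2\times m_1$ in the top-left corner, is $C^T$; it satisfies $\Tr(A_Cg)=\sum c_{i,j}g_{i,j}$. Hodges' evaluation of the inner sum (the input to Lemma~\ref{lem:gensingle}) depends only on $r=\rank A_C=\rank C$. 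It gives $|\GLn|$ at $r=0$, and for $r\ge1$ it is bounded in absolute value by roughly $|\GLn| q^{-rn+\binom{r}{2}}$ times a bounded factor. Write $K(r)$ for this sum at rank $r$. We then have
\[ \big|S-q^{-m_1m_2}|\GLn|\big|\le q^{-m_1m_2}\sum_{r\ge1} f(r)|K(r)|,\]
which is exactly the quantity controlled in Lemma~\ref{lem:gensingle}. It remains to show that $\sum_{r\ge1} f(r)|K(r)|/|\GLn|\ll q^{\min\{m_1+m_2-1-n,0\}}$.

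Next we need $f$. Here $f(r)$ is the number of $m_1\times m_2$ matrices of rank $r$, and the standard count gives
\[ f(r)=\prod_{i=0}^{r-1}\frac{(q^{m_1}-q^i)(q^{m_2}-q^i)}{q^r-q^i}\asymp q^{r(m_1+m_2)-r^2}. \]
Combining this with $|K(r)|/|\GLn|\ll q^{-rn+r^2/2}$ (or the exact formula), the $r$-th term is about $q^{r(m_1+m_2-n)-r^2/2+O(r)}$. The exponent is more precisely $r(m_1+m_2-n-1)-\binom{r}{2}$ up to constants. If that is right, the $r=1$ term is $\asymp q^{m_1+m_2-1-n}$ and the later terms decay geometrically with ratio $q^{m_1+m_2-n-1-r}$. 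This settles the case $m_1+m_2-1<n$.

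The case $m_1+m_2-1\ge n$ is where I expect the main obstacle. The claimed bound there is only $O(1)$, i.e.\ $S\ll q^{-m_1m_2}|\GLn|$, which is the trivial upper bound \eqref{eq:trivial}. The real content is therefore the lower-order statement, and the $O(1)$ error can be read as allowing $S=0$ only up to a constant. Indeed $S\ge0$ and $S\le$ the trivial bound, so $|S-q^{-m_1m_2}|\GLn||\ll q^{-m_1m_2}|\GLn|$ holds automatically. The case is therefore immediate from \eqref{eq:trivial}.

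The genuine work is thus getting the exact rank-$r$ value of the Hodges sum sharply enough in the first case, namely with the $-\binom{r}{2}$ (Gaussian) decay in the exponent, so that the series over $r$ is dominated by $r=1$ uniformly in $m_1,m_2$. I would first check this bound against the case $m_1=1$, $m_2=k$, where it should recover the sharp $q^{k-n}$ mentioned after Theorem~\ref{thm:pv}.
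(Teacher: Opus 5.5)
Your proposal is correct and is essentially the paper's proof: you dispose of $m_1+m_2>n$ via the trivial bound \eqref{eq:trivial}, and otherwise apply Lemma~\ref{lem:gensingle} with Landsberg's count $f(r)\ll q^{(m_1+m_2-r)r}$, so the $r$-th term is $q^{r(m_1+m_2-n-1)-\binom{r}{2}}$, a strictly decreasing integer exponent whose sum is dominated by $r=1$. Your hedge ``if that is right'' is unnecessary, since Hodges' formula gives exactly $|K(r)|/|\GLn|=\prod_{i=n-r+1}^{n}(q^i-1)^{-1}\ll q^{-nr+\binom{r}{2}}$; note that the cruder bound $q^{-rn+r^2/2}$ you first wrote would lose a factor $q^{1/2}$ at $r=1$ and would not suffice.
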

To study the total variation distance we first prove an elementary formula for the left-hand side of \eqref{eq:entryptprop}; this formula may also be used to give an alternative proof of Theorem~\ref{thm:submatrices}, see Remark~\ref{rem:alt}.
\begin{lem}\label{lem:form}
Under the notation of Theorem~\ref{thm:submatrices}, assume without loss of generality that $m_1\le m_2$. Let $r$ be the rank of $(x_{i,j})_{1\le i\le m_1,\, 1\le j \le m_2} \in \mathrm{M}_{m_1 \times m_2}(\FF_q)$. Then
	\begin{equation}\label{eq:entryptprop2} S(  (g_{i,j})_{\substack{1\le i \le m_1 \\ 1 \le j \le m_2}}, (\{x_{i,j}\})_{\substack{1 \le i \le m_1 \\ 1 \le j \le m_2}})=q^{(n-m_1)r}\prod_{i=0}^{m_2-r-1}(q^{n-m_1}-q^i)\prod_{i=m_2}^{n-1}(q^n-q^i)
	\end{equation}
	if $r\ge  m_1+m_2-n$, and otherwise
		\begin{equation}\label{eq:entryptprop3} S(  (g_{i,j})_{\substack{1\le i \le m_1 \\ 1 \le j \le m_2}}, (\{x_{i,j}\})_{\substack{1 \le i \le m_1 \\ 1 \le j \le m_2}})=0.
	\end{equation}
\end{lem}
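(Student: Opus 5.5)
Write $X=(x_{i,j})\in \mathrm{M}_{m_1\times m_2}(\FF_q)$ of rank $r$, with $m_1\le m_2$. The plan is to count directly, building $g$ in two steps. First, the first $m_2$ columns of $g$ form an $n\times m_2$ matrix $C$ whose top $m_1\times m_2$ block is $X$ and whose bottom $(n-m_1)\times m_2$ block $Y$ is free. Second, $g$ is invertible exactly when $C$ has full column rank $m_2$ and the remaining $n-m_2$ columns complete $C$ to a basis of $\FF_q^n$. Given such a $C$, the number of completions is $\prod_{i=m_2}^{n-1}(q^n-q^i)$, which is the last factor in \eqref{eq:entryptprop2}. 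So it remains to count the $Y\in \mathrm{M}_{(n-m_1)\times m_2}(\FF_q)$ for which $\binom{X}{Y}$ has rank $m_2$.

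The rank condition is equivalent to $\ker X\cap\ker Y=\{0\}$ as subspaces of $\FF_q^{m_2}$. Here $\ker X$ has dimension $d:=m_2-r$. I would choose a basis of $\FF_q^{m_2}$ whose first $d$ vectors span $\ker X$, and change coordinates by right-multiplying $Y$ by the corresponding invertible matrix; this is a bijection on the set of $Y$'s. The condition then becomes that the restriction of $Y$ to $\ker X$, an $(n-m_1)\times d$ matrix $Y_1$, is injective, i.e.\ has rank $d$. The other $r$ columns of $Y$ are unconstrained, which gives a factor $q^{(n-m_1)r}$.

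The number of injective $(n-m_1)\times d$ matrices is $\prod_{i=0}^{d-1}(q^{n-m_1}-q^i)$, obtained by choosing the columns successively outside the span of the previous ones. This product is exactly the middle factor of \eqref{eq:entryptprop2}. When $d>n-m_1$, i.e.\ $r<m_1+m_2-n$, no injective map exists, since the product contains the factor $q^{n-m_1}-q^{n-m_1}=0$; this gives \eqref{eq:entryptprop3}. Multiplying the three factors yields \eqref{eq:entryptprop2}.

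There is no serious obstacle here. The only point needing care is justifying the reduction to $\ker X\cap \ker Y=\{0\}$ and checking that the change of basis preserves the count.
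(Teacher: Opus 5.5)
Your proof is correct and follows essentially the same route as the paper. The paper normalizes $X=X_1D_rX_2$ and then counts the same three factors in the same order: the $r$ free columns, the $m_2-r$ columns that must be linearly independent in $\FF_q^{n-m_1}$, and the completion of the first $m_2$ columns to a basis. Your kernel formulation needs only the column change of basis (the analogue of the paper's $X_2^{-1}$), which makes the row normalization by $X_1$ unnecessary, but this is a cosmetic difference.
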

We use Lemma~\ref{lem:form} to prove the following.
\begin{thm}\label{thm:submatricestv}
	Let $1\le m_1,m_2 \le n$. If $g=g_n$ is a matrix chosen uniformly at random from $\GLn$ and $(U_{i,j})_{1\le i \le m_1,\, 1 \le j \le m_2}$ are i.i.d.~random variables uniformly distributed on $\FF_q$, we have
	\begin{equation}\label{eq:TVsub}
		d_{\TV}( (g_{i,j})_{\substack{1\le i\le m_1 \\ 1 \le j \le m_2}},(U_{i,j})_{\substack{1 \le i \le m_1 \\ 1 \le j \le m_2}})\asymp q^{\min\{m_1,m_2\}-1-n}. 
	\end{equation}
\end{thm}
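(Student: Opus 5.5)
Assume $m_1\le m_2$ and write $N=n$, $a=n-m_1$. By Lemma~\ref{lem:form}, $\PP(g_{\text{sub}}=x)$ depends only on $r=\rank(x)$. So the total variation distance equals
\[
\frac12\sum_{r} N_r\,\bigl|P_r-q^{-m_1m_2}\bigr|,
\]
where $N_r$ is the number of $m_1\times m_2$ matrices of rank $r$ and $P_r$ is the probability from \eqref{eq:entryptprop2}, divided by $|\GLn|$. The plan is to write $P_r=q^{-m_1m_2}(1+\epsilon_r)$ explicitly and estimate $\sum_r \PP(\rank U=r)\,|\epsilon_r|$. Here $U$ is a uniform random $m_1\times m_2$ matrix, so $\PP(\rank U=r)=N_rq^{-m_1m_2}$.

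\textbf{Computing $\epsilon_r$.} Dividing \eqref{eq:entryptprop2} by $|\GLn|=\prod_{i=0}^{n-1}(q^n-q^i)$ cancels the factors with $i\ge m_2$. We get
\[
P_r=\frac{q^{ar}\prod_{i=0}^{m_2-r-1}(q^{a}-q^i)}{\prod_{i=0}^{m_2-1}(q^n-q^i)}
=q^{-m_1m_2}\,\frac{\prod_{i=0}^{m_2-r-1}(1-q^{i-a})}{\prod_{i=0}^{m_2-1}(1-q^{i-n})}.
\]
The powers of $q$ combine to $q^{ar+a(m_2-r)-nm_2}=q^{-m_1m_2}$. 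Hence $1+\epsilon_r=\prod_{i=0}^{m_2-r-1}(1-q^{i-a})\big/\prod_{i=0}^{m_2-1}(1-q^{i-n})$.

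For $r=m_1$ (full rank) the numerator product has top index $m_2-m_1-1-a=m_2-n-1$. So $|\epsilon_{m_1}|\ll q^{m_2-n}\cdot q^{-1}$, up to the denominator correction, which is also $\ll q^{m_2-1-n}$.

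In general, the numerator deviates from $1$ by about $q^{m_2-r-1-a}=q^{m_1+m_2-r-1-n}$, capped at order $1$ (when $r<m_1+m_2-n$ the probability is $0$ and $|\epsilon_r|=1$). Thus $|\epsilon_r|\ll \min\{1,q^{m_1+m_2-r-1-n}\}+q^{m_2-1-n}$.

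\textbf{Upper bound.} Use the standard estimate $\PP(\rank U=m_1-s)\ll q^{-s(m_2-m_1+s)}$. This gives
\[
d_{\TV}\ll q^{m_2-1-n}+\sum_{s\ge1}q^{-s(m_2-m_1+s)}\min\{1,q^{m_2+s-1-n}\}.
\]
The $s$-th term is at most $q^{m_2-1-n}\cdot q^{s-s(m_2-m_1+s)}$, and since $s-s(m_2-m_1+s)\le 0$ the sum is geometric, dominated by $s=1$. Hence $d_{\TV}\ll q^{m_2-1-n}$.

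This does not match the stated exponent $\min\{m_1,m_2\}-1-n$. So the exponent must be extracted more carefully: the $\epsilon_r$ largely cancel on average. Indeed, $\sum_r N_r\epsilon_r=0$ exactly, since both distributions have total mass $1$. The correct order therefore comes from the fluctuation around this mean, not from the size of each $\epsilon_r$.

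To capture it, I would expand $1+\epsilon_r$ to first order. This gives
\[
\epsilon_r\approx \frac{q^{m_2-n}(1-q^{-r})-\bigl(q^{m_2-a-r}-1\bigr)\text{-type terms}}{q-1}.
\]
More precisely, $\epsilon_r\approx \frac{q^{m_2-n}-q^{m_2-r-a}}{q-1}$. The constant part $q^{m_2-n}/(q-1)$ is offset by the weighted mean. What remains is the $r$-dependent part
\[
\frac{q^{m_1+m_2-n-r}}{q-1}.
\]
Under the rank distribution, $m_1-r=s$ is typically $0$, and the deviation at $s\ge1$ occurs with probability $q^{-(m_2-m_1+1)}$. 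So the variation is of size $q^{m_2-n+1}\cdot q^{-(m_2-m_1+1)}/q=q^{m_1-1-n}$, giving the upper bound.

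\textbf{Lower bound.} Take $A=\{x:\rank x=m_1\}$, or its complement. The difference $\PP(g\in A)-\PP(U\in A)$ equals $\PP(\rank U<m_1)\cdot(\text{mean shift})$. Computing it exactly with $s=1$ gives $\gg q^{m_1-1-n}$. The main obstacle is this careful bookkeeping of the first-order terms, including the degenerate regime where $m_1+m_2>n$, where some $\epsilon_r=-1$ and the bound saturates at order $1$.
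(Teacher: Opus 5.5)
There is a genuine gap, located exactly where you noticed the mismatch. Your estimate $|\epsilon_{m_1}|\ll q^{m_2-1-n}$ is true but far from sharp. You bounded the numerator and denominator of $1+\epsilon_{m_1}$ separately, although they share factors. After reindexing, the numerator at $r=m_1$ is $\prod_{j=n-m_2+1}^{n-m_1}(1-q^{-j})$ and the denominator is $\prod_{j=n-m_2+1}^{n}(1-q^{-j})$, so
\[
1+\epsilon_{m_1}=\prod_{j=n-m_1+1}^{n}(1-q^{-j})^{-1},\qquad\text{hence}\qquad q^{-(n-m_1+1)}\le\epsilon_{m_1}\ll q^{-(n-m_1+1)}.
\]
In general $1+\epsilon_r=\prod_{j=n-m_1-m_2+r+1}^{n-m_2}(1-q^{-j})\big/\prod_{j=n-m_1+1}^{n}(1-q^{-j})$, so the additive $q^{m_2-1-n}$ in your bound for $|\epsilon_r|$ should be $q^{m_1-1-n}$.

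The mechanism you propose instead does not exist. You suggest the $\epsilon_r$ ``cancel on average'' and the true order comes from fluctuations about the mean. But $d_{\TV}=\frac12\sum_r\PP(\rank U=r)|\epsilon_r|$ is an $L^1$ quantity, and the identity $\sum_r N_r\epsilon_r=0$ does not shrink it. Since $\PP(\rank U=m_1)=\prod_{i=0}^{m_1-1}(1-q^{i-m_2})\gg1$, an $\epsilon_{m_1}$ genuinely of size $q^{m_2-1-n}$ would force $d_{\TV}\gg q^{m_2-1-n}$. The cancellation is pointwise: your own first-order formula $\epsilon_r\approx(q^{m_2-n}-q^{m_1+m_2-r-n})/(q-1)$ already vanishes at $r=m_1$.

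As written, that paragraph is a heuristic with unquantified remainders. Those remainders cannot be controlled by expanding numerator and denominator separately. The second-order terms there are of size $q^{2(m_2-1-n)}$, which exceeds the target $q^{m_1-1-n}$ (e.g.\ for $m_1=1$, $m_2=n\ge 3$). They disappear only through the exact cancellation. The lower bound is likewise asserted (``computing it exactly gives'') rather than proved.

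Once the cancellation is in hand, your framework closes quickly. Your $s\ge1$ sum is actually $\ll q^{m_1-1-n}$: its $s=1$ term is $q^{m_2-1-n}\cdot q^{-(m_2-m_1)}$, and consecutive ratios are $q^{-(m_2-m_1+2s)}\le q^{-2}$. Together with $|\epsilon_{m_1}|\ll q^{m_1-1-n}$ this gives the upper bound. The lower bound follows from $d_{\TV}\ge\frac12\PP(\rank U=m_1)\,\epsilon_{m_1}\gg q^{m_1-1-n}$.

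The paper's argument is shorter still. It observes that $1+\epsilon_r$ is nondecreasing in $r$, with $\epsilon_{m_1}>0>\epsilon_{m_1-1}$, so $\epsilon_r>0$ only at full rank. Hence $d_{\TV}=\PP(\rank U=m_1)\,\epsilon_{m_1}$ exactly, with no sum over lower ranks and no separate treatment of the regime $m_1+m_2>n$.
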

\subsection{The all-entries case}\label{sec:allentries}
The following theorem improves on Theorem~\ref{thm:pv} when $(L_i(g))_{i=1}^{n^2}=(g_{i,j})_{1\le i,j\le n}$ and $h_i=h$.
\begin{thm}\label{thm:pvfunc}
Suppose $q$ is a prime. Given integers $(x_{i,j})_{1\le i,j\le n}$ and a positive integer $1\le h <q$, let $I_{i,j}=\{ x_{i,j} + t \mod q: 0 \le t < h\}$. For $n\ge 2$, we have
	\[ S((g_{i,j})_{1\le i,j\le n}, (I_{i,j})_{1\le i,j\le n}) = \left(\frac{h}{q}\right)^{n^2}|\GLn| \bigg(1+O\bigg( \bigg(3\frac{q^{\frac{n-1}{2n-1}}(\log(h)+3)}{h}\bigg)^{2n-1}\bigg)\bigg). \]
\end{thm}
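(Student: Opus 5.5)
Expanding each interval indicator in additive characters modulo $q$, I would write
\[
\mathbf{1}_{I_{i,j}}(y)=\frac{1}{q}\sum_{a\in\FF_q}\widehat{\mathbf{1}}_{I_{i,j}}(a)\,e_q(ay),\qquad \widehat{\mathbf{1}}_{I_{i,j}}(0)=h,\qquad |\widehat{\mathbf{1}}_{I_{i,j}}(a)|\le\min\Bigl\{h,\ \frac{1}{2\|a/q\|}\Bigr\}.
\]
Multiplying these over all $(i,j)$ and summing over $g$ gives
\[
S=q^{-n^2}\sum_{A\in\Mn}\Bigl(\prod_{i,j}\widehat{\mathbf{1}}_{I_{i,j}}(A_{j,i})\Bigr)\sum_{g\in\GLn}e_q(\Tr(Ag)).
\]
The term $A=0$ is the main term $(h/q)^{n^2}|\GLn|$. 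For $A\neq 0$ I would use Hodges' evaluation of the inner sum. It depends only on $r=\rank A$ and has size $\ll |\GLn|\,q^{-rn}$, up to harmless factors (this is the same input behind Lemma~\ref{lem:gen}). The relative error is therefore bounded by
\[
h^{-n^2}\sum_{r=1}^{n}q^{-rn}\sum_{\rank A=r}\ \prod_{i,j}|\widehat{\mathbf{1}}_{I_{i,j}}(A_{j,i})|,
\]
where every entry with $A_{j,i}=0$ contributes the factor $h$.

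The key new step, which is what improves on Theorem~\ref{thm:pv}, is to use the structure of rank-$r$ matrices rather than the trivial bound on $f(r)$. A rank-$r$ matrix has at least $r$ nonzero entries, and more precisely it contains an $r\times r$ invertible submatrix. Grouping by support, I would bound
\[
\sum_{\rank A=r}\prod_{i,j}\frac{|\widehat{\mathbf{1}}(A_{j,i})|}{h}.
\]
For each nonzero entry, summing the factor $|\widehat{\mathbf{1}}(a)|/h$ over $a\neq 0$ gives at most $q(\log h+3)/h$; write $\delta:=q(\log h+3)/h$. The dangerous regime is when $A$ has few nonzero entries but large rank. The extreme case is rank $1$: such a matrix has the form $uv^T$, and its support is a combinatorial rectangle of size $s\times t$ with $s+t\ge 2$.

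For rank $r$, I would parametrise $A$ through its support pattern. Each rank-$r$ matrix is determined by $r$ rows and $r$ columns forming an invertible block together with the entries in those rows and columns, about $2rn-r^2$ free entries. This suggests the bound
\[
\sum_{\rank A=r}\prod\frac{|\widehat{\mathbf{1}}|}{h}\ \lesssim\ \binom{n}{r}^{2}\sum_{m}\bigl(\text{\#patterns with } m \text{ nonzero entries}\bigr)\,\delta^{m},\qquad m\ \ge\ r,
\]
but I would actually carry it out only for rank $1$ and then control higher ranks crudely. For rank $1$ with a support of size $s\times t$, the entries $u_iv_j$ are determined by $s+t-1$ free parameters. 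I would sum $\prod|\widehat{\mathbf{1}}(u_iv_j)|$ by applying H\"older or Cauchy--Schwarz over one row and bounding by the minimum of the $st$ factors. The resulting estimate is $q^{-n}\cdot q^{s+t-1}(\log h+3)^{st}h^{-st}$ times the number of supports, $\binom{n}{s}\binom{n}{t}$.

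The main obstacle is optimising this over $s$ and $t$ and checking that it produces exactly the exponent $2n-1$, with base $3q^{\frac{n-1}{2n-1}}(\log h+3)/h$. The extremal configuration should be $s=1,t=n$ (or the transpose), a single row with all entries nonzero. That case gives $q^{-n}\cdot q^{n}\cdot(\log h+3)^{n}/h^{n}$ from the characters, but the gain from Hodges' sum, which is exactly $\sum_{g}e(\Tr(Ag))$ for a rank-one $A$, must be exploited more precisely. I would try to interpolate between the trivial bound and the full count so that the worst term becomes $\bigl(q^{n-1}(\log h+3)^{2n-1}/h^{2n-1}\bigr)$, which explains the stated form. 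The factor $3^{2n-1}$ absorbs the binomial counts of supports.

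For $r\ge 2$, I would show that the contribution decays geometrically in $r$ compared with the rank-$1$ term, using the extra factor $q^{-(r-1)n}$ against at most $q^{(r-1)(2n)}$ additional parameters each weighted by $\delta$. Making this comparison clean is the second delicate point.
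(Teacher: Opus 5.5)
\textbf{There is a genuine gap.} Your setup (character expansion plus Hodges) matches the paper. But the step that carries the theorem is a bound on $\Sigma_r:=\sum_{\rank A=r}\prod_{i,j}|\widehat{\mathbf{1}}_{I_{i,j}}(A_{j,i})|/h$ with the right exponent, and you never prove it. The optimisation over supports, the ``interpolation'' and the geometric decay in $r$ are stated only as intentions, and the exponent $2n-1$ is read off from the statement rather than derived.

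The one concrete estimate you do give is false. For a rank-one $A=uv^T$ with $s\times t$ support you claim a saving $((\log h+3)/h)^{st}$, one per nonzero entry. But the $st$ entries $u_iv_j$ depend on only $s+t-1$ parameters and are far from independent. The all-ones matrix $J$ alone has weight $\bigl(\sin(\pi h/q)/(h\sin(\pi/q))\bigr)^{n^2}\gg 1$ when $nh/q$ is small. Your bound with $s=t=n$ is $q^{2n-1}((\log h+3)/h)^{n^2}$, which tends to $0$ for, e.g., $h=q^{0.9}$ and fixed $n\ge 2$. This miscount is why you conclude that the single-row configuration is extremal and that Hodges' sum must be ``exploited more precisely''. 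In fact Hodges' evaluation is exact, and the full-support rank-one matrices are the worst case.

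Separately, Hodges' sum has size $\asymp|\GLn|q^{-nr+\binom{r}{2}}$, not $|\GLn|q^{-nr}$. The factor $q^{\binom{r}{2}}$ is not harmless (it is $q^{\binom{n}{2}}$ at $r=n$), so your proposed rank-$r$ versus rank-$1$ comparison starts from the wrong decay.

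\textbf{The missing step is short, and you wrote it down before setting it aside.} Fix which $r\times r$ minor of $A$ is invertible; there are $\binom{n}{r}^2$ choices. After permuting rows and columns,
\[ A=\begin{pmatrix} U & V\\ W & WU^{-1}V\end{pmatrix}, \]
so $A$ is determined by the $r(2n-r)$ entries of $U,V,W$.
\begin{itemize}
\item Bound the weight of each of the $(n-r)^2$ determined entries by $1$, since $|\widehat{\mathbf{1}}_I(a)|\le h$.
\item Sum the $r(2n-r)$ free entries independently over all of $\FF_q$, zero included and invertibility of $U$ ignored.
\end{itemize}
This gives $\Sigma_r\le\binom{n}{r}^2M^{r(2n-r)}$ with $M=\sum_{a\in\FF_q}|\widehat{\mathbf{1}}_I(a)|/h\le q(\log h+3)/h$. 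Equivalently, this is your displayed pattern bound with patterns taken over the free entries only: with $\delta_0=\sum_{a\neq 0}|\widehat{\mathbf{1}}_I(a)|/h$ one has $\sum_m\binom{r(2n-r)}{m}\delta_0^m=(1+\delta_0)^{r(2n-r)}=M^{r(2n-r)}$.

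With the correct factor $q^{-nr+\binom{r}{2}}$, the rank-$r$ term is $\ll\binom{n}{r}^2\beta_r^{r(2n-r)}$, where $\beta_r=q^{\frac12-\frac{1}{2(2n-r)}}(\log h+3)/h$. At $r=1$ this is $n^2\bigl(q^{\frac{n-1}{2n-1}}(\log h+3)/h\bigr)^{2n-1}$. So the exponent $2n-1$ is simply the number of free entries of a rank-one matrix with a fixed pivot.

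To finish, note that $\beta_r$ decreases and $r(2n-r)$ increases in $r$.
\begin{itemize}
\item If $\beta_1\le 1$, every term is at most $\binom{n}{r}^2\beta_1^{2n-1}$, and $\sum_{r\ge 1}\binom{n}{r}^2<\binom{2n}{n}\le 3^{2n-1}$ supplies the constant $3$.
\item If $\beta_1>1$, the claimed error term exceeds $1$ and the trivial bound \eqref{eq:trivial} suffices.
\end{itemize}
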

The error term goes to $0$ as long as $h/(q^{\frac{n-1}{2n-1}}(\log (q)+3)) \to \infty$. 

In forthcoming work with Zelingher \cite{Zelingher2026}, we adapt the approach behind Theorems~\ref{thm:pv} and \ref{thm:pvfunc} to count matrices $g\in C$ that satisfy restrictions of the form $L_i(g) \in I_i$, where $C\subseteq \Mn$ is some conjugation-invariant subset.
\subsection{Past works}
\subsubsection{\texorpdfstring{$k=1$}{k=1}}
The literature on $k=1$ is somewhat disorganized, because some authors studied $S(L,I)$ itself, while others studied an exponential sum whose evaluation is equivalent to evaluating $S(L,I)$. Moreover, most authors were not aware of an old paper of Hodges \cite{Hodges} that seems to be the first to evaluate these exponential sums.

Buckhiester \cite{Buckhiester}, Bender \cite{Bender} and Prasad \cite[Lemma~2]{Prasad} computed $S((L_i)_{i=1}^{k},(I_i)_{i=1}^{k})$ for $k=1$, $L_1(g)=\Tr(g)$ and $|I_1|=1$, i.e., they counted matrices with prescribed trace (more generally, they considered matrices of any fixed rank with prescribed trace). This is also treated  in Stanley \cite[Proposition~1.10.15]{Stanley}. The case of $k=1$, $|I_1|=1$ and \textit{general} $L_1$ is considered by Stanley as well \cite[Exercise~1.196 and its solution]{Stanley}. Let $\psi\colon \FF_q\to \CC^{\times}$ be a nontrivial additive character. Kim \cite{Kim}, Fulman \cite{Fulman} and Chae and Kim \cite{CK} evaluated $\sum_{g \in \GLn} \psi( \Tr(g))$. Li and Hu \cite[Theorem~2.1]{LiHu} proved that, for every $A \in \Mn$,
\begin{equation}\label{eq:Hodges} \sum_{g \in \GLn} \psi( \Tr(Ag)) = (-1)^{\rank(A)} q^{\binom{n}{2}} \prod_{i=1}^{n-\rank(A)}(q^i-1).
\end{equation}
 They also considered sums over $\mathrm{SL}_n(\FF_q)$. As we shall see in Remark~\ref{rem:equiv}, formula \eqref{eq:Hodges} is \textit{equivalent} to determining $S(L,I)$. The aforementioned results on $S(L,I)$ and $\sum_{g \in \GLn} \psi( \Tr(Ag))$ are preceded by a 1955 paper of Hodges \cite{Hodges}. Theorem~7 of \cite{Hodges} evaluates the exponential sum $\sum_{g \in \mathrm{M}_{n\times m}:\, \rank(g)=r} \psi(\Tr(Ag))$ for any $0\le r \le \min\{n,m\}$ and any $A\in \mathrm{M}_{m\times n}(\FF_q)$, and in particular recovers \eqref{eq:Hodges}. His proof is completely elementary. 
 
 The sum he considered was also implicitly evaluated in the coding theory community by Delsarte \cite{Delsarte}, Ravagnani \cite{Ravagnani,Ravagnani2018} and Beelen and  Ghorpade \cite{Beelen}, see the discussion in Balasubramanian, Kaipa and Khurana \cite{Balasubramanian} who provide yet another evaluation.

\subsubsection{Large-\texorpdfstring{$p$}{p} limit: algebraic geometry}\label{sec:largep}
Suppose $q=p$ is a prime. Hu and Li \cite[Theorem~1.5]{Equi} proved a result which in particular shows that 
\begin{equation}\label{eq:huli}
S( (g_{i,j})_{1\le i,j\le n}, (I_{i,j})_{1\le i,j\le n}) \sim \prod_{1 \le i,j \le n} |I_{i,j}| \frac{|\GLnp|}{p^{n^2}} \end{equation}
in the limit as $p\to \infty$ for intervals $I_{i,j}$ of size \textit{proportional $p$}. Among other problems, Perret-Gentil \cite[Theorem~1.7]{PerretGentil} studied a quantitative version of \eqref{eq:huli} (including for other classical groups). His result, in the case of $\GLnp$, says that 
\begin{equation}\label{eq:pg}
S( (g_{i,j})_{1\le i,j\le n}, (I_{i,j})_{1\le i,j\le n}) = \prod_{1 \le i,j \le n} |I_{i,j}| \frac{|\GLnp|}{p^{n^2}} + O_n(p^{n^2-\frac{1}{2}} (\log p)^{2})
\end{equation}
for intervals $I_{i,j}$, which improves \eqref{eq:huli} but is strictly weaker than our Theorem~\ref{thm:pv}. Perret-Gentil relied on Deligne's work on the Weil conjectures. 

Next we assume $n \ge 2$ and identify $\FF_p$ with $\{-(p-1)/2,\ldots,(p-1)/2\}\subseteq \ZZ$. Given $1 \le H\le (p-1)/2$, Ahmadi and Shparlinski \cite[Theorem~9]{Ahmadi} considered the number of $n\times n$ matrices with rank at most $k$ (when reduced modulo $p$) whose entries lie in $\{-H,\ldots,H\}$. While this is a seemingly different problem, if one makes the choice $k=n-1$ and considers the complement of the set they studied, one obtains an estimate for invertible matrices, namely 
\begin{equation}\label{eq:shpar}
S( (g_{i,j})_{1\le i,j\le n}, (\{-H,\ldots,H\})_{1\le i,j\le n}) = (2H+1)^{n^2} \big(1-\frac{1}{p}\big) + O_n\big(p^{\frac{n^2-1}{2}}(\log p)^{n^2}+H^{n^2-2}p^{\frac{1}{2}}(\log p)^2\big).
\end{equation}
This is meaningful for $H/(p^{\frac{1}{2}-\frac{1}{2n^2}}\log p)\to \infty$ and fixed $n$. The main input into \cite[Theorem~9]{Ahmadi} is a general result of Fouvry \cite{Fouvry}, relying on work of Katz and Laumon. When specialized to $I_{i,j}=\{-H,\ldots,H\}$, Theorem~\ref{thm:pvfunc} is superior to \eqref{eq:shpar}, in addition to allowing $n$ to vary. 
The dependence on $n$ in \eqref{eq:shpar} cannot be removed in general, because the `correct' main term in the large-$n$ limit should be $(2H+1)^{n^2} |\GLnp|/p^{n^2}$ in place of $(2H+1)^{n^2}(1-1/p)$.
\subsubsection{Large-\texorpdfstring{$n$}{n} limit: universality}
Let $\mu$ be a probability measure on $\FF_q$, not concentrated on any proper affine subspace (when regarding $\FF_q$ as an $\FF_p$-vector space, when $p$ is the characteristic). Let $M_n\in \Mn$ be a random matrix with i.i.d.~entries distributed according to $\mu$. Charlap, Rees and Robbins \cite{Charlap} proved that
\begin{equation}\label{eq:CRR}
\lim_{n\to \infty} \PP(M_n \in \GLn) = \prod_{i=1}^{\infty}\left(1-\frac{1}{q^i}\right).
\end{equation}
Maples \cite{Maples} (cf.~Nguyen--Paquette \cite{Nguyen}) proved a quantitative version: for an absolute constant $c>0$,
\[  \PP(M_n \in \GLn) = \prod_{i=1}^{\infty}\left(1-\frac{1}{q^i}\right) + O( e^{-c\alpha n})\]
holds, where $\alpha:=1-\max_{A}\mu(A)$ and the maximum is taken over proper affine subspaces $A\subseteq \FF_q$. When $q$ is prime and the entries of $M_n$ are independent but not identically distributed, a special case of a result of Wood  \cite[Thm.~1.2]{Wood} shows that \eqref{eq:CRR} persist; her result is qualitative (no rate of convergence is given).

Taking $q$ to be a prime, these results can be contrasted with our Theorem~\ref{thm:pvfunc}. We consider $\PP(M_n\in \GLn)$ where the entries of $M_n$ are independent, but not necessarily identically distributed: each is selected uniformly at random from a different interval of length $h$; this falls under Wood's framework. Our rate of convergence is $q^{-nc}$ as soon as $h$ is slightly larger than $\sqrt{q}$ (namely $h\ge 3 (\log (q)+3)q^{\frac{1+c}{2}}$).
The only work we are aware of that provides a comparable error is the recent paper of Koenig and Nguyen \cite{Koenig}, which requires $h$ to be proportional to $p$.

\subsubsection{Fixed zero entries and algebraic combinatorics}\label{sec:Fze}
Various authors \cite{Haglund,Stembridge,Klein,Lewis2,Cotardo,ChenSelover} considered $S((L_i)_{i=1}^{k}, (I_i)_{i=1}^{k})$ with $I_1=\ldots=I_k = \{0\}$ and $L_i(g) = g_{x_i,y_i}$ for special choices of $(x_i,y_i) \in \{1,\ldots,n\}\times \{1,\ldots,n\}$. In words, they counted elements of $\GLn$ with some of their entries prescribed to be $0$. For simplicity of notation, given $S \subseteq \{1,\ldots,n\}\times \{1,2,\ldots,n\}$ we write $T_q(n,S)$ for the number of matrices $g\in \GLn$ with $g_{i,j}=0$ for every $(i,j)\in S$.

Haglund \cite[Theorem~1]{Haglund} computed $T_q(n,S)$ when $S$ is the \textit{complement} of a Ferrers board $F$ (so he counted invertible matrices supported on $F$). Recall that a Ferrers board is a subset of $\{1,\ldots,n\} \times \{1,\ldots,n\}$ such that $(i,j)\in F \implies (k,p)\in F$ for all $1\le k \le i$ and $j\le p \le n$. A consequence of Haglund's result is that $T_q(n,S)$ is a polynomial in $q$ in this case. This polynomial may be $0$; he showed that it is not identically $0$ if and only if one can place $n$ nonattacking rooks on the squares of $F$ (this means no two rooks are in the same column or row).

For $n=7$ Stembridge \cite{Stembridge} provided an example of $S$ for which $T_q(7,S)$ is not a polynomial in $q$ (this was motivated by a conjecture of Kontsevich).

Lewis, Liu, Morales, Panova, Sam and Zhang \cite{Lewis} considered $S=\{ (i,i): 1 \le i \le n\}$, i.e., they counted invertible matrices with zeros on the diagonal, answering a question of R.~Stanley (cf.~Exercise 1.197 in \cite{Stanley}). They obtained a closed form formula in this case. This quantity may be viewed as a $q$-analogue of the number of derangements (permutations without fixed points), because derangements may be characterized as permutations whose associated permutation matrix has only zeros on its diagonal. More generally, in \cite{Lewis} the authors also computed the number of matrices of fixed rank $r$ ($r\in \{1,2,\ldots,n\}$) with only zeros on the diagonal.

For further results on formulas for $T_q(n,S)$ for special sets $S$ see \cite{Klein,Lewis2,Cotardo}. Most of these works, as well as \cite{Lewis}, also consider invertible \textit{symmetric} matrices with fixed zero entries.

A general formula for $T_q(n,S)$ was proved by Ravagnani  \cite[Rem.~65]{Ravagnani} (cf.~Lewis and Morales \cite[Cor.~2.2]{Lewis2}).
\begin{lem}[Ravagnani]\label{lem:genzero}
	Let $S \subseteq \{1,2,\ldots,n\}\times\{1,2,\ldots,n\}$. Given $s\in S$ we write $E_s \in \Mn$ for the elementary matrix with $1$ in the $s$-th entry and $0$ otherwise. We have
	\[   T_q(n,S) = q^{\binom{n}{2}-|S|} \sum_{r=0}^{n} (-1)^r \prod_{i=1}^{n-r}(q^i-1) f(r)\]
	for 
	\[ f(r) := |\{ (a_s)_{s \in S}  \in \FF_q^{S}: \rank\big(\sum_{s \in S} a_s E_s\big)=r\}|.\]
\end{lem}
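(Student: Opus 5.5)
We count by detecting the conditions $g_s=0$ with additive characters and then applying Hodges' evaluation \eqref{eq:Hodges}. Fix a nontrivial additive character $\psi$ of $\FF_q$. For each $s=(i,j)\in S$ we have $g_{i,j}=\Tr(E_{j,i}g)$, where $E_{j,i}$ is the elementary matrix with a $1$ in position $(j,i)$. Orthogonality of characters gives
\[ \mathbf{1}_{g_s=0}=q^{-1}\sum_{a_s\in\FF_q}\psi(a_s g_s). \]
Multiplying these identities over $s\in S$ and summing over $g\in\GLn$, we get
\[ T_q(n,S)=q^{-|S|}\sum_{(a_s)_{s\in S}\in\FF_q^S}\ \sum_{g\in\GLn}\psi\big(\Tr(A_a g)\big),\qquad A_a:=\sum_{s=(i,j)\in S}a_s E_{j,i}. \]
This is the linear-functional framework of the paper with $L_s(g)=g_s$, whose representing matrices are the transposes $E_s^{T}$.

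Next, the inner sum is evaluated by \eqref{eq:Hodges}, which we take as known (Hodges' Theorem~7, or Li--Hu). It gives
\[ \sum_{g\in\GLn}\psi(\Tr(A_ag))=(-1)^{\rank(A_a)}q^{\binom n2}\prod_{i=1}^{n-\rank(A_a)}(q^i-1). \]
The right-hand side depends on $a$ only through $\rank(A_a)$. So we group the outer sum according to $r=\rank(A_a)$. The number of $a$ with $\rank(A_a)=r$ equals the number of $a$ with $\rank(\sum_s a_sE_s)=r$, because $A_a=(\sum_s a_sE_s)^{T}$ and transposition preserves rank. This count is exactly $f(r)$. Substituting yields the stated formula, with $r$ running from $0$ to $n$; the term $r=0$ comes from $a=0$, and there $f(0)=1$.

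The only real input is \eqref{eq:Hodges}. If a self-contained argument is wanted, I would prove it as follows.
\begin{itemize}
\item The sum is invariant under $A\mapsto PAQ$ with $P,Q\in\GLn$, by substituting $g\mapsto Q g P$ and using $\Tr(PAQg)=\Tr(AQgP)$. So it suffices to take $A=\mathrm{diag}(I_r,0)$.
\item For this $A$, the sum becomes $\sum_g\psi(g_{1,1}+\dots+g_{r,r})$.
\item Induct on $r$. Let $N(r)$ denote the sum for $A=\mathrm{diag}(I_r,0)$. Fix the first $r-1$ diagonal contributions and examine the dependence on $g_{r,r}$. One shows that the number of invertible $g$ with the other entries prescribed and $g_{r,r}=x$ is $q^{-1}$ of the total plus a correction that depends on whether $x$ equals a specific value.
\item This gives the recursion $N(r)=-(q^{n-r+1}-1)^{-1}N(r-1)$, which produces the alternating signs and the dropping of factors $(q^i-1)$.
\end{itemize}
This last step is the main obstacle if one does not simply cite Hodges. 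The character-sum bookkeeping in the first two paragraphs is routine.
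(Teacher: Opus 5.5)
Your proposal is correct and is the paper's proof: detecting $g_s=0$ with additive characters is exactly \eqref{eq:Fourierinput} with $I_s=\{0\}$ and representing matrices $E_s^{T}$. After that, Hodges' formula \eqref{eq:Hodges} and regrouping by rank (transposition preserves rank) give the stated identity.

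Your optional sketch of \eqref{eq:Hodges} is not needed, since the paper likewise takes it as an input, and as written its inductive step is only gestured at. If you want a self-contained proof, sum over the columns of $g$ one at a time. This gives $\sum_{g\in\GLn}\psi(g_{1,1}+\cdots+g_{r,r})=(-1)^r q^{\binom{r}{2}}\prod_{i=r}^{n-1}(q^n-q^i)$ directly, which is \eqref{eq:Hodges} for $A=\mathrm{diag}(I_r,0)$.
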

Both Ravagnani and Lewis--Morales obtain the formula from the MacWilliams identities, as derived by Delsarte in the context of rank-metric codes \cite{Delsarte}. In \S\ref{sec:iden} we give a direct proof using Hodges' work.

\subsection{Comparison with random matrix theory}\label{sec:classical}
Let us compare our results on $\GLn$ with those on $\OO(n)$, the (real) orthogonal group. We recommend Chapter 2 of E.~Meckes' book \cite{Meckes} as a source for the random matrix theory results. Throughout, $U_n$ is a matrix chosen randomly from $\OO(n)$ according to the probability Haar measure, and $g_n$ is chosen uniformly at random from $\GLn$.

In \cite[Lemma~2.1]{Meckes} it is shown that the entries of $U_n$ are identically distributed. This is the case for $\GLn$ as well, as follows from the fact that $g_n$ and $P g_nQ$ have the same distribution when $P,Q\in \GLn$ are given permutation matrices.

It is a classical fact that the first row of $U_n$ is distributed like a random uniform vector from the unit sphere $S^{n-1} \subseteq \RR^{n}$ \cite[Chapter~1.2]{Meckes}. In comparison, a simple counting argument shows that the first row of $g_n$ is distributed like a random uniform vector from $\FF_q^n \setminus \{ \vec{0}\}$.

The distribution of submatrices of $U_n$ has received considerable attention \cite{DF,DEL,Eaton,Jiang,JiangMa,Stewart}. 
The optimal result was proved independently by Jiang and Ma \cite{JiangMa} and Stewart \cite{Stewart}. The result says the following: for each $n\ge 1$, let $U_n \in \OO(n)$ be a random orthogonal matrix. Suppose that $p_n$ and $q_n$ are positive integers with $p_n q_n = o(n)$. Let $U_n(p_n,q_n)$ denote the top-left $p_n\times q_n$ block of $U_n$. Let $Z(p_n,q_n)$ denote a $p_n \times q_n$ random  matrix of i.i.d.~standard Gaussians. Then the total variation distance
\[ d_{\TV}(\sqrt{n} U_n(p_n,q_n), Z(p_n,q_n))\]
goes to $0$ as $n \to \infty$. Jiang and Ma, in the aforementioned work, also demonstrated that $p_nq_n=o(n)$ cannot be improved. This should be compared to Theorem~\ref{thm:submatricestv} for $\GLn$, which studies a similar question but has the significantly more relaxed condition $n-\min\{p_n,q_n\} \to \infty$.

Let $A_1,\ldots,A_k$ be $n\times n$ real matrices with $\Tr(A_i A_j^T)=n\delta_{ij}$. Chatterjee and Meckes \cite[Thm.~11]{Chatterjee} bounded the $L_1$-Wasserstein distance between $(\Tr(A_i U_n))_{i=1}^{k}$ and a vector of $k$ independent standard Gaussians. They proved that the distance goes to $0$ as $k=o(n)$ (see \cite{DAristotile,MeckesLinear,Keating} for results on the $k=1$ case and \cite{Collins} for another approach when $k$ is fixed). This can be seen as an analogue of our Theorem~\ref{thm:dtv}, where convergence holds in the wider range $2n-k\to \infty$. In fact, for fixed $q$ we determine in \S\ref{sec:tv} the exact range in which the distance in Theorem~\ref{thm:dtv} tends to $0$: it turns out to be $(n^2-k)/n\to \infty$.
\begin{remark}
The range of Chatterjee--Meckes is sharp, as can be seen by considering $A_i = \sqrt{n} E_{1,i}$ in which case $(\Tr(A_i U_n))_{i=1}^{k}$ is distributed like the first $k$ coordinates of a point uniformly distributed on the $(n-1)$-sphere of radius $\sqrt{n}$. When $k/n$ tends to a positive constant, the $L^1$-Wasserstein distance does not go to $0$; we leave the details to the reader.
\end{remark}
\begin{remark}
If one specializes Chatterjee--Meckes to $(A_i)_{i=1}^{k} = (\sqrt{n}E_{i,j})_{1 \le i\le p_n,\, 1\le j \le q_n}$, then their result implies that the $L^1$-Wasserstein distance between $\sqrt{n} U_n(p_n,q_n)$ and $Z(p_n,q_n)$ goes to $0$ as $p_n q_n=o(n)$. So their result recovers the Wasserstein analogue of the aforementioned total variation results.
\end{remark}
\subsection*{Acknowledgments}
O.G.~is supported by the Israel Science Foundation (grant no.~2088/24), an Alon Fellowship and the Rabbi Dr.~Roger Herst Faculty Fellowship.
\subsection*{AI usage}
A draft of this paper was prepared, and was uploaded to an LLM upon completion. Apart from proofreading and helping navigate some of the coding theory and universality literature, the LLM ended up contributing the argument behind Theorem~\ref{thm:pvfunc} (namely the proof of \eqref{eq:rankr}) and the results in \S\ref{sec:tv} on total variation.

\section{Strategy}\label{sec:strategy}
Let $(L_i)_{i=1}^{k}$ and $(I_i)_{i=1}^{k}$ be as in the introduction. Recall the notation $A_i\in \Mn$ for the unique matrix with $L_i(g) = \Tr(A_ig)$. We begin by giving a Fourier-analytic formula for $S((L_i)_{i=1}^{k}, (I_i)_{i=1}^{k})$ which will serve as the basis for our proofs. Fix a nontrivial additive character $\psi \colon \FF_q\to \CC^{\times}$. Given a subset $I_i \subseteq \FF_q$, our initial step is expanding $t\mapsto \mathbf{1}_{t\in I_i}$ using additive characters. We express
\[\mathbf{1}_{t \in I_i} = q^{-1} \sum_{a \in \FF_q} \widehat{I_i}(-a) \psi(at)\]
for every $t \in \FF_q$, where for $X \subseteq \FF_q$ we define
\[ \widehat{X}(a) := \sum_{x \in X} \psi(ax).\]
In this way, our quantity \eqref{eq:mainquantity} transforms into
\begin{equation}\label{eq:Fourier}
S((L_i)_{i=1}^{k}, (I_i)_{i=1}^{k}) = q^{-k}\sum_{(a_1,\ldots,a_k) \in \FF^k_q} \prod_{i=1}^{k} \widehat{I_i}(-a_i) \sum_{g \in \GLn}\psi\big(\sum_{i=1}^{k} L_i(g) a_i\big).
\end{equation}
The main term $ \prod_{i=1}^{k}(|I_i|/q)|\GLn|$ arises from the zeroth frequency $(a_1,\ldots,a_k)=\vec{0}$:
\begin{equation}\label{eq:zeroth}
q^{-k} \prod_{i=1}^{k} \widehat{I_i}(-0)\sum_{g \in \GLn}\psi\big(\sum_{i=1}^{k} L_i(g) 0\big) = q^{-k} |\GLn| \prod_{i=1}^{k}|I_i|.
\end{equation}
We input Hodges' formula \eqref{eq:Hodges} into \eqref{eq:Fourier}, obtaining
\begin{equation}\label{eq:Fourierinput}
S((L_i)_{i=1}^{k}, (I_i)_{i=1}^{k}) = q^{\binom{n}{2}-k}\sum_{ (a_1,\ldots,a_k) \in \FF^k_q} \prod_{i=1}^{k} \widehat{I_i}(-a_i) (-1)^{\rank(\sum_{j=1}^{k} A_j a_j)}\prod_{i=1}^{n-\rank(\sum_{j=1}^{k} A_ja_j)}(q^i-1).
\end{equation}
The identity \eqref{eq:Fourierinput} is the basis for our results. We draw several conclusions from \eqref{eq:Fourierinput}.
\begin{lem}\label{lem:gensingle}
Let $q$ be a prime power. Let $L_1,\ldots,L_k \colon \Mn \to \FF_q$ be $k$ linearly independent linear functionals given by $L_i(g)=\Tr(A_ig)$, $A_i \in \Mn$. Let $(x_i)_{i=1}^{k}$ be a sequence of elements of $\FF_q$. We have
\[ S((L_i)_{i=1}^{k}, (\{x_i\})_{i=1}^{k}) =q^{-k}|\GLn| \bigg( 1+O\bigg( \sum_{r=1}^{n} f(r) q^{-nr+\binom{r}{2}}\bigg)\bigg) \]
where $f$ is defined in \eqref{eq:deff}.
\end{lem}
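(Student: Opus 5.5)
We start from the Fourier identity \eqref{eq:Fourierinput} specialized to singletons $I_i=\{x_i\}$. In this case $\widehat{I_i}(-a_i)=\psi(-a_ix_i)$, which has absolute value $1$. The frequency $(a_1,\ldots,a_k)=\vec{0}$ gives the main term $q^{-k}|\GLn|$ by \eqref{eq:zeroth}.

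For a nonzero frequency vector, linear independence of $(A_i)_{i=1}^{k}$ guarantees that $\sum_j a_jA_j\neq 0$, so its rank $r$ satisfies $1\le r\le n$. We group the remaining frequencies by $r=\rank(\sum_j a_jA_j)$ and bound each summand trivially in absolute value. This gives
\[ \Big|S((L_i)_{i=1}^{k},(\{x_i\})_{i=1}^{k}) - q^{-k}|\GLn|\Big| \le q^{\binom{n}{2}-k}\sum_{r=1}^{n} f(r)\prod_{i=1}^{n-r}(q^i-1), \]
with $f$ as in \eqref{eq:deff}.

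It remains to compare $q^{\binom{n}{2}}\prod_{i=1}^{n-r}(q^i-1)$ with $|\GLn|$. We write
\[ |\GLn| = q^{\binom{n}{2}}\prod_{i=1}^{n}(q^i-1). \]
Then the ratio is
\[ \frac{q^{\binom{n}{2}}\prod_{i=1}^{n-r}(q^i-1)}{|\GLn|} = \prod_{i=n-r+1}^{n}(q^i-1)^{-1}. \]
Using $q^i-1\ge q^i/2$ is too lossy, since it produces a factor $2^r$. Instead we use
\[ \prod_{i=n-r+1}^{n}(1-q^{-i})\ \ge\ \prod_{i=1}^{\infty}(1-2^{-i}) \gg 1. \]
This gives
\[ \prod_{i=n-r+1}^{n}(q^i-1)^{-1} \ll q^{-\sum_{i=n-r+1}^{n} i} = q^{-nr+\binom{r}{2}}, \]
since $\sum_{i=n-r+1}^{n} i = nr-\binom{r}{2}$. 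The implied constant is absolute.

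Dividing the error bound by $q^{-k}|\GLn|$ then yields exactly $O\big(\sum_{r=1}^{n} f(r)q^{-nr+\binom{r}{2}}\big)$, which is the claim. The only step requiring care is this uniform comparison of the products, and the bound by the infinite product $\prod_{i\ge 1}(1-2^{-i})$ handles it.
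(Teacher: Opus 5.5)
Your proposal is correct and follows essentially the same route as the paper: specialize \eqref{eq:Fourierinput} to singletons, extract the zero frequency as the main term, group the remaining frequencies by rank, bound trivially, and use $q^{\binom{n}{2}}\prod_{i=1}^{n-r}(q^i-1)\ll |\GLn|\,q^{-nr+\binom{r}{2}}$. Your explicit justification of that last comparison via $\prod_{i\ge1}(1-2^{-i})\gg 1$ correctly fills in the step the paper states without proof.
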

\begin{proof}
We apply \eqref{eq:Fourierinput} with $I_i = \{ x_i\}$. We observe $\widehat{I_i}(-a_i)=\psi(-a_ix_i)$ has absolute value $1$ and that $\widehat{I_i}(0)=1$. We group the summands in the right-hand side of \eqref{eq:Fourierinput} according to the value of $\rank(\sum_{i=1}^{k} a_i A_i)$ and separate the contribution of $r=0$ (which gives the main term) from $1 \le r \le n$. The result is simplified by observing that $q^{\binom{n}{2}}\prod_{i=1}^{n-r}(q^i-1) \ll |\GLn| q^{-nr+\binom{r}{2}}$.
\end{proof}
Let $g=g_n$ be chosen uniformly at random from $\GLn$. Lemma~\ref{lem:gensingle} can be expressed as follows.
\begin{cor}\label{cor:gensingle}
In the notation of Lemma~\ref{lem:gensingle}, if $(U_i)_{i=1}^{k}$ are i.i.d.~random variables uniformly distributed on $\FF_q$ then
\[  \PP( L_i(g)=x_i\, \forall 1\le i \le k) - \PP( U_i=x_i \, \forall 1 \le i \le k) \ll q^{-k} \sum_{r=1}^{n} f(r) q^{-nr+\binom{r}{2}}. \]
\end{cor}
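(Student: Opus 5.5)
The plan is to deduce Corollary~\ref{cor:gensingle} directly from Lemma~\ref{lem:gensingle} by dividing through by $|\GLn|$. Since $g$ is uniform on $\GLn$, we have
\[ \PP(L_i(g)=x_i\ \forall 1\le i\le k) = \frac{S((L_i)_{i=1}^{k},(\{x_i\})_{i=1}^{k})}{|\GLn|}. \]
Since the $U_i$ are i.i.d.\ uniform on $\FF_q$, we also have $\PP(U_i=x_i\ \forall i)=q^{-k}$.

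Next, divide the conclusion of Lemma~\ref{lem:gensingle} by $|\GLn|$. This gives
\[ \PP(L_i(g)=x_i\ \forall i) = q^{-k}\bigg(1+O\bigg(\sum_{r=1}^{n} f(r)q^{-nr+\binom{r}{2}}\bigg)\bigg). \]
Subtracting $q^{-k}$ leaves exactly $q^{-k}\cdot O\big(\sum_{r=1}^n f(r)q^{-nr+\binom r2}\big)$, which is the claimed bound. The implied constant is absolute, inherited from Lemma~\ref{lem:gensingle}.

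There is no real obstacle. The only point worth checking is that the $O$-term in Lemma~\ref{lem:gensingle} is an absolute bound on the error itself, not merely a relative one valid for large $n$. That holds because its proof uses only $|\psi|=1$ and the bound $q^{\binom n2}\prod_{i=1}^{n-r}(q^i-1)\ll|\GLn|q^{-nr+\binom r2}$. The latter follows from $\prod_{i\ge1}(1-q^{-i})\ge\prod_{i\ge1}(1-2^{-i})>0$, which gives a uniform absolute constant.
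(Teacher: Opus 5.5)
Your proposal is correct and matches the paper's approach: the paper gives no separate proof and simply states that Lemma~\ref{lem:gensingle} ``can be expressed as follows,'' i.e.\ the corollary is obtained by dividing by $|\GLn|$ and noting $\PP(U_i=x_i\ \forall i)=q^{-k}$, exactly as you do. Your remark that the implied constant is absolute, via $\prod_{i\ge1}(1-q^{-i})\ge\prod_{i\ge1}(1-2^{-i})>0$, is also sound.
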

From Corollary~\ref{cor:gensingle} summed over $\{x_i\}_{i=1}^{k} \in \FF_q^k$, we have
\[  d_{\TV}((L_i(g))_{i=1}^{k}, (U_i)_{i=1}^{k}) \ll  \sum_{r=1}^{n} f(r) q^{-nr+\binom{r}{2}}. \]
However, one can do better as shown in the next lemma.
\begin{lem}\label{lem:dtv}
	Let $q$ be a prime power. Let $L_1,\ldots,L_k \colon \Mn \to \FF_q$ be $k$ linearly independent linear functionals given by $L_i(g)=\Tr(A_ig)$, $A_i \in \Mn$. Let $(U_i)_{i=1}^{k}$ be i.i.d.~random variables uniformly distributed on $\FF_q$. We have
	\[  d_{\TV}((L_i(g))_{i=1}^{k}, (U_i)_{i=1}^{k}) \ll  \sqrt{\sum_{r=1}^{n} f(r) q^{2\left(-nr+\binom{r}{2}\right)}} \]
	where $f$ is defined in \eqref{eq:deff}.
\end{lem}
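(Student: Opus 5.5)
The plan is to use Cauchy--Schwarz together with Parseval on $\FF_q^k$. Write $P(x)=\PP(L_i(g)=x_i\ \forall i)$ for $x=(x_1,\ldots,x_k)\in\FF_q^k$. By \eqref{eq:dtv},
\[ d_{\TV}\big((L_i(g))_{i=1}^{k},(U_i)_{i=1}^{k}\big)=\frac12\sum_{x\in\FF_q^k}|P(x)-q^{-k}|. \]
Applying Cauchy--Schwarz over the $q^k$ points gives
\[ d_{\TV}\le \frac12\, q^{k/2}\Big(\sum_{x}|P(x)-q^{-k}|^2\Big)^{1/2}. \]
So it suffices to show that $\sum_x |P(x)-q^{-k}|^2 \ll q^{-k}\sum_{r\ge1} f(r)q^{2(-nr+\binom r2)}$.

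To compute this $L^2$ quantity, I would use \eqref{eq:Fourier} with singletons $I_i=\{x_i\}$. This gives
\[ P(x)=q^{-k}\sum_{a\in\FF_q^k}\psi(-a\cdot x)\,\Phi(a), \qquad \Phi(a):=|\GLn|^{-1}\sum_{g\in\GLn}\psi\Big(\Tr\big(\textstyle\sum_j a_jA_j\, g\big)\Big). \]
The $a=0$ term equals exactly $q^{-k}$. Hence $P(x)-q^{-k}$ is $q^{-k}$ times the Fourier expansion of $\Phi$ restricted to $a\neq0$. Parseval on $\FF_q^k$, namely $\sum_x|\sum_a c_a\psi(-a\cdot x)|^2=q^k\sum_a|c_a|^2$, then yields
\[ \sum_x|P(x)-q^{-k}|^2=q^{-k}\sum_{a\ne0}|\Phi(a)|^2. \]

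Next I would insert Hodges' formula \eqref{eq:Hodges}, which depends only on $r=\rank(\sum_j a_jA_j)$. By linear independence of the $A_j$, we have $r\ge1$ whenever $a\neq0$. The same estimate used in the proof of Lemma~\ref{lem:gensingle}, $q^{\binom n2}\prod_{i=1}^{n-r}(q^i-1)\ll|\GLn|q^{-nr+\binom r2}$, gives $|\Phi(a)|\ll q^{-nr+\binom r2}$. Grouping the $a$ by rank produces exactly $f(r)$ terms of size $\ll q^{2(-nr+\binom r2)}$ for each $r$.

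Combining the pieces: $d_{\TV}\le \frac12 q^{k/2}\big(q^{-k}\sum_{r\ge1}f(r)q^{2(-nr+\binom r2)}\big)^{1/2}$. The powers of $q^{k}$ cancel, leaving the claimed bound. I expect no real obstacle here. The only points needing care are the normalisation constants in Parseval and the uniformity of the implied constant in the bound $\prod_{i=1}^{n-r}(q^i-1)/\prod_{i=1}^{n}(q^i-1)\cdot q^{\binom n2}/|\GLn|\cdots\ll q^{-nr+\binom r2}$. That uniformity follows since $\prod_{i\ge1}(1-q^{-i})\ge\prod_{i\ge1}(1-2^{-i})>0$.
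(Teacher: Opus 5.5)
Your proposal is correct and follows the paper's proof essentially step for step. Like the paper, you expand $P(x)-q^{-k}$ via \eqref{eq:Fourier} and Hodges' formula, apply Parseval on $\FF_q^k$ and Cauchy--Schwarz over the $q^k$ points, group the nonzero frequencies by rank, and use $q^{\binom n2}\prod_{i=1}^{n-r}(q^i-1)\ll|\GLn|q^{-nr+\binom r2}$. Your normalisations, including the factor $\tfrac12$ and the uniformity of the implied constant, check out.
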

\begin{proof}
	We apply \eqref{eq:Fourierinput} with $I_i = \{ x_i\}$ to obtain
	\[	S((L_i)_{i=1}^{k}, (\{x_i\})_{i=1}^{k}) = q^{\binom{n}{2}-k}\sum_{r=0}^{n} (-1)^{r}\prod_{i=1}^{n-r}(q^i-1)  \sum_{\substack{ (a_1,\ldots,a_k) \in \FF^k_q \\ \rank(\sum_{j=1}^{k} a_jA_j)=r}} \psi(- \sum_{i=1}^{k} a_i x_i).\]
	Note that $\rank(\sum_{j=1}^{k}a_jA_j)=0$ if and only if $a_j=0$ for all $j$. Identifying the contribution of $r=0$ as $|\GLn|\PP( U_i=x_i \, \forall 1 \le i \le k)$, and dividing by $|\GLn|$, this becomes
	\begin{multline}\label{eq:PPdiff}  \PP( L_i(g)=x_i\, \forall 1\le i \le k) - \PP( U_i=x_i \, \forall 1 \le i \le k) \\=  q^{-k}\sum_{r=1}^{n} \frac{(-1)^{r}\prod_{i=1}^{n-r}(q^i-1) q^{\binom{n}{2}}}{|\GLn|} \sum_{\substack{ (a_1,\ldots,a_k) \in \FF^k_q \\ \rank(\sum_{j=1}^{k} a_jA_j)=r}} \psi(- \sum_{i=1}^{k} a_i x_i).
	\end{multline}
	In general, given a function $g\colon \FF_q^k \to \CC$ given by $g(\mathbf{x}) = \sum_{\mathbf{a}\in \FF_q^{k}} c_{\mathbf{a}} \psi(-\sum_{i=1}^{k}a_i x_i)$ for some complex constants $(c_{\mathbf{a}})_{\mathbf{a}\in \FF_q^k}$ (where $\mathbf{x}=(x_1,\ldots,x_k)$ and $\mathbf{a}=(a_1,\ldots,a_k)$), we have
	\[ \sum_{\mathbf{x}\in \FF_q^k} |g(\mathbf{x})|^2 =q^k\sum_{\mathbf{a}\in \FF_q^{k}} |c_{\mathbf{a}}|^2\]
	by orthogonality of $\mathbf{x}\mapsto \psi(-\sum_{i=1}^{k}a_i x_i)$ with respect to the inner product $\langle g_1,g_2\rangle = \sum_{\mathbf{x} \in \FF_q^k} g_1(x)\overline{g_2}(x)/q^k$. Hence, by applying Cauchy--Schwarz and using the Fourier expansion of $g(\mathbf{x}) = \PP( L_i(g)=x_i\, \forall 1\le i \le k) - \PP( U_i=x_i \, \forall 1 \le i \le k) $ given in \eqref{eq:PPdiff}, we get that
	\begin{align*}  
		&\sum_{\mathbf{x}\in \FF_q^k} |\PP( L_i(g)=x_i\, \forall 1\le i \le k) - \PP( U_i=x_i \, \forall 1 \le i \le k)| \\
		&\qquad\le q^{\frac{k}{2}} \sqrt{ \sum_{\mathbf{x}\in \FF_q^k} | \PP( L_i(g)=x_i\, \forall 1\le i \le k) - \PP( U_i=x_i \, \forall 1 \le i \le k)|^2}  \\
		&\qquad =  \sqrt{\sum_{r=1}^{n} \frac{\prod_{i=1}^{n-r}(q^i-1)^2 q^{2\binom{n}{2}}}{|\GLn|^2} f(r).}
	\end{align*}
	The bound is simplified by observing that $q^{\binom{n}{2}}\prod_{i=1}^{n-r}(q^i-1) \ll |\GLn| q^{-nr+\binom{r}{2}}$.
\end{proof}
The next lemma estimates  $S((L_i)_{i=1}^{k}, (I_i)_{i=1}^{k})$ for general sets $I_i$, but is most useful for structured sets such as intervals. It is a generalization of Lemma~\ref{lem:gensingle}.
\begin{lem}\label{lem:gen}
Let $q$ be a prime power. Let $L_1,\ldots,L_k \colon \Mn \to \FF_q$ be $k$ linearly independent linear functionals given by $L_i(g)=\Tr(A_ig)$, $A_i \in \Mn$. Let $I_1,\ldots,I_k$ be nonempty subsets of $\FF_q$. We have
\[ S((L_i)_{i=1}^{k}, (I_i)_{i=1}^{k}) = q^{-k}\big(\prod_{i=1}^{k}|I_i|\big) |\GLn| (1+O( M_1(\mathbf{L},\mathbf{I}))) \]
where $\mathbf{L}=(L_i)_{i=1}^{k}$, $\mathbf{I}=(I_i)_{i=1}^{k}$ and
\[ M_1(\mathbf{L},\mathbf{I}):= \sum_{r=1}^{n} q^{-nr+\binom{r}{2}}\sum_{\substack{ (a_1,\ldots,a_k) \in \FF^k_q\setminus \{\vec{0}\}\\ \rank(\sum_{j=1}^{k} a_jA_j)=r}} \prod_{i=1}^{k} \frac{|\widehat{I_i}(-a_i)|}{\widehat{I_i}(0)}.\]
\end{lem}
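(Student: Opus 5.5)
We start from the identity \eqref{eq:Fourierinput}, exactly as in the proof of Lemma~\ref{lem:gensingle}, but now keeping general sets $I_i$. The summands are grouped according to $r=\rank(\sum_{j=1}^{k} a_jA_j)$.

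By linear independence of $(A_j)_{j=1}^{k}$, the rank is $0$ only for $(a_1,\ldots,a_k)=\vec{0}$. That term contributes $q^{\binom{n}{2}-k}\prod_{i=1}^{n}(q^i-1)\prod_{i=1}^{k}|I_i|$. Since $q^{\binom{n}{2}}\prod_{i=1}^{n}(q^i-1)=|\GLn|$, this is exactly the main term $q^{-k}\big(\prod_{i=1}^{k}|I_i|\big)|\GLn|$, consistent with \eqref{eq:zeroth}.

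For the remaining frequencies with $1\le r\le n$, we take absolute values, which discards the sign $(-1)^r$. We then use the bound already invoked in Lemma~\ref{lem:gensingle}:
\[ q^{\binom{n}{2}}\prod_{i=1}^{n-r}(q^i-1) \ll |\GLn|\, q^{-nr+\binom{r}{2}}. \]
This follows because
\[ \frac{q^{\binom{n}{2}}\prod_{i=1}^{n-r}(q^i-1)}{|\GLn|}=\prod_{i=n-r+1}^{n}(q^i-1)^{-1}, \]
and $\prod_{i=n-r+1}^{n}(q^i-1)\asymp q^{\sum_{i=n-r+1}^{n} i}=q^{nr-\binom{r}{2}}$. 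The implied constant is absolute, since $\prod_{i\ge1}(1-q^{-i})\ge\prod_{i\ge1}(1-2^{-i})>0$.

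This bounds the error by
\[ q^{-k}|\GLn|\sum_{r=1}^{n}q^{-nr+\binom{r}{2}}\sum_{\substack{\mathbf{a}\neq\vec{0}\\ \rank(\sum_j a_jA_j)=r}}\prod_{i=1}^{k}|\widehat{I_i}(-a_i)|. \]
Factoring out the main term, i.e.\ dividing each factor $|\widehat{I_i}(-a_i)|$ by $|I_i|=\widehat{I_i}(0)$, gives exactly $q^{-k}\big(\prod_{i=1}^{k}|I_i|\big)|\GLn|\cdot O(M_1(\mathbf{L},\mathbf{I}))$.

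There is no real obstacle. The only points requiring care are checking that the constant in the bound above is absolute, uniformly in $q$ and $r$, and that $\widehat{I_i}(0)=|I_i|>0$ since each $I_i$ is nonempty, so the normalization is legitimate.
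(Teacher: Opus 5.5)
Your proposal is correct and follows the paper's proof exactly: you isolate the zero frequency in \eqref{eq:Fourierinput} using linear independence, bound the remaining terms with the triangle inequality together with $q^{\binom{n}{2}}\prod_{i=1}^{n-r}(q^i-1)\ll |\GLn|\,q^{-nr+\binom{r}{2}}$, and then normalize by $\widehat{I_i}(0)=|I_i|$. Your justification that the implied constant is absolute, via $\prod_{i\ge1}(1-2^{-i})>0$, fills in a detail the paper leaves implicit.
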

\begin{proof}
Recall $(A_i)_{i=1}^{k}$ are linearly independent. The contribution of the case $(a_1,\ldots,a_k)=\vec{0}$ to the right-hand side of \eqref{eq:Fourierinput} is exactly $q^{-k} (\prod_{i=1}^{k} |I_i| )|\GLn|$ by \eqref{eq:zeroth}. From \eqref{eq:Fourierinput} and the triangle inequality,
\[\big|S((L_i)_{i=1}^{k}, (I_i)_{i=1}^{k}) -q^{-k}\big(\prod_{i=1}^{k}|I_i|\big)|\GLn| \big| \le 
q^{\binom{n}{2}-k} \sum_{r=1}^{n}\prod_{i=1}^{n-r}(q^i-1) \sum_{\substack{(a_1,\ldots,a_k)\in \FF_q^k \setminus \{\vec{0}\}\\ \rank(\sum_{j=1}^{k} a_j A_j)=r}} \prod_{i=1}^{k} |\widehat{I_i}(-a_i)| .\]
By observing that $q^{\binom{n}{2}}\prod_{i=1}^{n-r}(q^i-1) \ll |\GLn| q^{-nr+\binom{r}{2}}$ and $\widehat{I_i}(0)=|I_i|$, the conclusion follows.
\end{proof}
\begin{remark}\label{rem:equiv}
If $k=1$, we can simplify \eqref{eq:Fourier} considerably. In this case we drop the indices and write $L$ and $I$ for $L_1$ and $I_1$:
\[S(L,I) = q^{-1}\sum_{a \in \FF_q}  \widehat{I}(-a) \sum_{g \in \GLn}\psi(L(g) a) = q^{-1}\sum_{a \in \FF_q}  \widehat{I}(-a) \sum_{g \in \GLn}\psi(L(ag)). \]
Given any $a \in \FF_q^{\times}$, $(ag)_{g\in \GLn}$ is just a permutation of $(g)_{g \in \GLn}$. This allows us to write
\begin{align*}
S(L,I) &= \frac{|I|}{q} |\GLn|+ q^{-1} \sum_{a \in \FF_q^{\times}} \widehat{I}(-a) \sum_{g \in \GLn} \psi(L(g))\\
&=\frac{|I|}{q} |\GLn| +  \big( \mathbf{1}_{0\in I} - \frac{|I|}{q}\big)\sum_{g \in \GLn} \psi(L(g))
\end{align*}
because $\sum_{a \in \FF_q^{\times}} \widehat{I}(-a) = q\mathbf{1}_{0 \in I}-|I|$, showing that $S(L,I)$ is determined by $\sum_{g \in \GLn}\psi(L(g))$. On the other hand, $\sum_{g \in \GLn}\psi(L(g))$ is determined by $S(L,\{0\})$. Indeed, by choosing $I = \left\{0\right\}$ we have
\[ \sum_{g \in \GLn} \psi(L(g)) = \big(1-\frac{1}{q}\big)^{-1}\big( S(L,\{0\}) - \frac{1}{q} |\GLn|\big).\]
\end{remark}
\subsection{Coding theory}
Let $X_1,\ldots,X_k\in \mathrm{M}_{n\times m}(\FF_q)$ be linearly independent matrices and let $C:=\Sp_{\FF_q}(X_1,\ldots,X_k)$. The space $C$, equipped with the metric $d(A,B)=\rank(A-B)$, is called a (linear) rank-metric code. The rank distribution of $C$ is the function $f(r)=f_C(r):=\#\{A \in C: \rank(A)=r\}$ which is central to our work.

Rank-metric codes were introduced by Delsarte \cite{Delsarte} and rediscovered by Gabidulin \cite{Gabidulin}. A distinguished quantity of $C$ is the \textit{minimum rank distance}, $d=d_C$, defined as the smallest positive integer such that $f_C(d)\neq 0$. Assuming $m\ge n$, the Singleton bound states $d\le n-\lceil \frac{k}{m}\rceil +1$ \cite[Thm.~5.4]{Delsarte}; codes attaining equality are called maximal-rank-distance (MRD) codes. The rank distribution of MRD codes is determined by $d,n,m,k,q$ alone \cite[Thm.~5.6]{Delsarte}.

As we explain in \S\ref{sec:tv}, a sharp upper bound on $f(1)$ is implicitly proved by Schaathun \cite{Schaathun}; we include in \S\ref{sec:tv} self-contained proofs for upper bounds on $f(r)$.
\section{Proofs}
\subsection{Proof of Theorem~\ref{thm:basic}}
By Lemma~\ref{lem:gensingle}, since $f(r)\le q^k$ for $1\le r \le n$, and since $-nr+\binom{r}{2}$ is decreasing for $r$ in the range $1\le r \le n$, the result follows. 
\subsection{Proof of Theorem~\ref{thm:dtv}}
By Lemma~\ref{lem:dtv}, since $f(r)\le q^k$ for $1\le r \le n$, and since $-nr+\binom{r}{2}$ is decreasing for $r$ in the range $1\le r \le n$, the result follows.
\subsection{Proof of Theorem~\ref{thm:pv}}
We suppose that $q$ is a prime. If $q=2$ then each $I_i$ is a singleton, in which case the result is a special case of Theorem~\ref{thm:basic}. From now on we suppose that $q>2$. We take $\psi(x) = e^{2\pi i x/q}$ for $x \in \FF_q$. Recall $I_i=\{ x_i + t \mod q: 0 \le t < h_i\}$. By Lemma~\ref{lem:gen}, since $f(r) \le q^k$ for $1\le r \le n$, and $-nr+\binom{r}{2}$ is decreasing in the range $1\le r \le n$, we have
\[ S((L_i)_{i=1}^{k}, (I_i)_{i=1}^{k}) = q^{-k}\big(\prod_{i=1}^{k}|I_i|\big) |\GLn| (1+O( q^{k-n}M_2(\mathbf{I}))) \]
where
\[ M_2(\mathbf{I}):= q^{-k}\sum_{ (a_1,\ldots,a_k) \in \FF^k_q\setminus \{\vec{0}\}} \prod_{i=1}^{k} \frac{|\widehat{I_i}(-a_i)|}{\widehat{I_i}(0)} \le \prod_{i=1}^{k} \Bigg( q^{-1}\sum_{a_i\in \FF_q}\frac{|\widehat{I_i}(-a_i)|}{\widehat{I_i}(0)}\Bigg).\]
We estimate the inner $a_i$-sum. For $a \in \FF_q^{\times}$,
\[\big| \widehat{I_i}(a)\big| = \big|\sum_{0 \le t < h_i} \psi(at)\big| = \bigg| \frac{\psi(ah_i)-1}{\psi(a)-1}\bigg| \le \frac{2}{|\psi(a)-1|} = \frac{1}{|\sin(a\pi/q)|} \le \frac{1}{2\| a/q\|}\]
where $\|t\|$ is the distance of $t$ to the nearest integer (so $\|a/q\| = a/q$ if $1\le a\le (q-1)/2$ and $\|a/q\| = (q-a)/q$ if $(q+1)/2\le a<q$). We also have the trivial bound $|\widehat{I_i}(a)|\le h_i$, and so
\[ q^{-1} \sum_{a_i\in \FF_q}|\widehat{I_i}(-a_i)| \le q^{-1}\sum_{|a|\le \frac{q}{2h_i}} h_i + 2q^{-1} \sum_{ \frac{q}{2h_i} < a \le \frac{q-1}{2}} \frac{q}{2a} \le \frac{h_i}{q}+ 1+\sum_{a=\lfloor q/(2h_i)\rfloor+1}^{(q-1)/2}\frac{1}{a}.\]
From the bounds  $\sum_{a=m_1+1}^{m_2}\tfrac{1}{a} \le \int_{m_1}^{m_2}\tfrac{ \mathrm{d}t}{t} = \log(m_2/m_1)$  for $m_2\ge m_1 \ge 1$ and $\sum_{a=1}^{m_2}\tfrac{1}{a} \le \log(m_2)+1$ we deduce that 
\begin{equation}\label{eq:FourierL1}
q^{-1}\sum_{a_i\in \FF_q}|\widehat{I_i}(-a_i)|  \le \log(h_i)+3.
\end{equation}
Moreover, by Cauchy--Schwarz,
\[ q^{-1} \sum_{a_i \in \FF_q} |\widehat{I_i}(-a_i)| \le q^{-\frac{1}{2}} \sqrt{\sum_{a_i \in \FF_q} |\widehat{I_i}(a_i)|^2} = \sqrt{h_i} \]
where the last equality is Parseval for functions on $\FF_q$. We have shown $q^{-1}\sum_{a_i\in \FF_q}|\widehat{I_i}(-a_i)|$ is bounded by $\min\{\sqrt{h_i},\log(h_i)+3\}$, implying that $M_2(\mathbf{I}) \le \prod_{i=1}^{k} h_i^{-1}  \min\{ \sqrt{h_i}, \log (h_i)+3\}$, which concludes the proof.
\begin{remark}
If some $h_i$ is larger than $q/2$ one can gain a minor improvement by passing from $I_i$ to its complement $I_i^c = \FF_q \setminus I_i$, which is an interval of length $q-h_i$. Because $\widehat{I_i}(a)=-\widehat{I_i^c}(a)$ as long as $a\neq 0$, this allows replacing the bound $\min\{\sqrt{h_i},\log(h_i)+3\}$ on $q^{-1}\sum_{a_i\in \FF_q}|\widehat{I_i}(-a_i)|$ by $1+\min\{\sqrt{q-h_i},\log(q-h_i)+3\}$; the extra $1$ comes from $q^{-1}(\widehat{I_i}(0)-\widehat{I_i^c}(0))=q^{-1}(2h_i-q) \le 1$.
\end{remark}
\subsection{Proof of Theorem~\ref{thm:diag}}
Let $A_i = E_{i,i}\in \Mn$ be the elementary matrix with $1$ in the $(i,i)$-th entry and $0$ otherwise.
Let us take the functionals $L_i(g)=\Tr(A_ig)=g_{i,i}$ ($1\le i \le n$) and $I_i=\{ x_i\}$, $x_i \in \FF_q$. Then, by \eqref{eq:Fourierinput},
\[	S((g_{i,i})_{i=1}^{n}, (\{x_i\})_{i=1}^{n})  =q^{\binom{n}{2}-n}\sum_{ (a_1,\ldots,a_n) \in \FF^n_q} \psi(-\sum_{i=1}^{n}a_i x_i) (-1)^{\rank(\sum_{j=1}^{n} A_j a_j)} \prod_{i=1}^{n-\rank(\sum_{j=1}^{n} A_j a_j)}(q^i-1). \]
The rank of $\sum_{j=1}^{n} A_j a_j$ is equal to $| \{ 1 \le i \le n: a_i \neq 0\}|$, so  
\begin{equation}\label{eq:Sgii} 	S((g_{i,i})_{i=1}^{n}, (\{x_i\})_{i=1}^{n})  =q^{\binom{n}{2}-n}\sum_{r=0}^{n} (-1)^r \prod_{i=1}^{n-r}(q^i-1) F((x_i)_{i=1}^{n},r)
\end{equation}
where
\[ F((x_i)_{i=1}^{n},r) := \sum_{ \substack{(a_1,\ldots,a_n) \in \FF^n_q:\\ |\{1 \le i \le n:\, a_i \neq 0\}|=r}} \psi(-\sum_{i=1}^{n}a_i x_i).\]
We shall evaluate $F$. Denoting by $U$ the set of indices $u$ with $a_u \neq 0$, we have
\[ F((x_i)_{i=1}^{n},r) = \sum_{\substack{U \subseteq \{1,2,\ldots,n\} \\ |U|=r}} G((x_u)_{u \in U})\]
where
\begin{multline*}G((x_u)_{u\in U}) := \sum_{ \substack{(a_u)_{u \in U} \in \FF^U_q:\\ \forall u \in U: \, a_u \neq 0}} \psi(-\sum_{u\in U} a_u x_u) = \sum_{ (a_u)_{u \in U} \in \FF^U_q} \psi(-\sum_{u\in U} a_u x_u) \prod_{u\in U}(1-\mathbf{1}_{a_u=0}) \\= \sum_{T \subseteq U} (-1)^{|T|} H((x_u)_{u \in U \setminus T})
\end{multline*}
for
\[ H((x_u)_{u \in U \setminus T}) := \sum_{(a_u)_{u \in U \setminus T} \in \FF_q^{U\setminus T}} \psi(-\sum_{u \in U\setminus T} a_u x_u) = q^{|U\setminus T|} \mathbf{1}_{\forall u \in U\setminus T:\, x_u =0}.\]
Let $M:=\{1 \le i \le n : x_i =0 \}$. It follows that  
\begin{align*}
    F((x_i)_{i=1}^{n},r) &=  \sum_{\substack{T\subseteq U \subseteq \{1,2,\ldots,n\} \\ |U|=r}} (-1)^{|T|} q^{|U\setminus T|} \mathbf{1}_{\forall u \in U\setminus T:\, x_u=0} \\
    &=  \sum_{\substack{T\subseteq U \subseteq \{1,2,\ldots,n\} \\ |U|=r \\ U \setminus T \subseteq M}} (-1)^{|T|} q^{|U\setminus T|} \\
    &= \sum_{j=0}^{r} (-1)^{r-j} q^{j} \binom{|M|}{j} \binom{n-j}{r-j}\\
    &=\binom{n}{r}(-1)^r \sum_{j=0}^{\min\{|M|,r\}} (-q)^{j} \binom{r}{j} \frac{(|M|)_j}{(n)_j} 
    \end{align*} 
    where $j$ stands for the size of $U\setminus T$. Plugging this in \eqref{eq:Sgii} we obtain the desired result.

\begin{remark}
The function $F((x_i)_{i=1}^{n},r)$ appeared in the coding theory literature, where it is shown that it agrees with the $q$-ary Krawtchouk polynomial \cite[Theorem~4.1]{Delsarte73}. It arises as an eigenvalue of a certain adjacency matrix \cite[Theorem~4.2]{Delsarte73}.
\end{remark}
\subsection{Proof of Corollary~\ref{cor:diag}}
We first show the total variation distance in \eqref{eq:TVdiag} is $O(nq^{-n})$, given \eqref{eq:diagpw}. Summing the error term in \eqref{eq:diagpw} over all possible choices of $(x_i)_{i=1}^{n}$, we obtain
\begin{multline*}
d_{\TV}( (g_{i,i})_{i=1}^{n},(U_i)_{i=1}^{n})\ll \sum_{m=0}^{n}\sum_{\substack{(x_i)_{i=1}^{n} \in \FF_q^n:\\ |\{1\le i \le n: \,x_i= 0\}| = m}} q^{-2n}(qm+n) \\
=q^{-2n}\sum_{m=0}^{n}\binom{n}{m}(q-1)^{n-m}(qm+n)=q^{-2n}(n q^n+nq^n)=2nq^{-n}
\end{multline*}
using the binomial theorem. If $n\le 4$, \eqref{eq:TVdiag} is established since $n\ll \sqrt{nq}$. Otherwise, we apply Lemma~\ref{lem:dtv}: defining $f(r) := |\{ (a_j)_{j=1}^{n} \in \FF_q^n : \rank(\sum_{j=1}^{n} E_{j,j}a_j)=r\}| = (q-1)^r \binom{n}{r}$ we have
\[d_{\TV}( (g_{i,i})_{i=1}^{n},(U_i)_{i=1}^{n})\ll \sqrt{\sum_{r=1}^{n} f(r) q^{2\left(-nr+\binom{r}{2}\right)}}.\]
The sequence $b_r := f(r)q^{2\left(-nr+\binom{r}{2}\right)}$ satisfies
\[ \frac{b_{r+1}}{b_r} =\frac{n-r}{r+1} (q-1) q^{-2(n-r)} \le nq^{1-n} \le \frac{1}{2}\]
for $n \ge 4$, $q\ge 2$ and $1\le r \le n/2$. Moreover, $\sum_{r=0}^{n}f(r) =q^n$ holds, and $q^{2\left(-nr+\binom{r}{2}\right)}$ is decreasing in $1\le r \le n$. Thus
\[d_{\TV}( (g_{i,i})_{i=1}^{n},(U_i)_{i=1}^{n})\ll \sqrt{b_1} +  \sqrt{\sum_{r=0}^n f(r)}\cdot \max_{n\ge r>n/2} \sqrt{q^{-2nr+2\binom{r}{2}}}  \ll \frac{\sqrt{qn}}{q^n}\]
and \eqref{eq:TVdiag} is established. It remains to demonstrate \eqref{eq:diagpw}. Recall $|\GLn|=q^{\binom{n}{2}}\prod_{i=1}^{n}(q^i-1)$. The contribution of $r=0$ to the right-hand side of \eqref{eq:Sgiiform} is exactly $q^{-n}|\GLn|$, so using the triangle inequality, \eqref{eq:Sgiiform} implies that 
\begin{equation}\label{eq:Sgiiform2}
S((g_{i,i})_{i=1}^{n}, (\{x_i\})_{i=1}^{n})- q^{-n}|\GLn|  \ll q^{-n}|\GLn|\sum_{r=1}^{n} a_r
\end{equation}
where
\begin{equation}\label{eq:ardef}
a_r= \binom{n}{r}\prod_{i=n-r+1}^{n}(q^i-1)^{-1}   \sum_{j=0}^{\min\{m,r\}} q^j \binom{r}{j}\frac{(m)_j}{(n)_j}
\end{equation}
and $m=|\{ 1 \le i \le n: x_i = 0\}|$. 
We have 
\[ a_1 =  \frac{n}{q^n-1} \big(1+q\frac{m}{n}\big) \ll q^{-n}(qm+n).\]
To conclude it suffices to demonstrate that $\sum_{i=1}^{n} a_i \ll a_1$. We have
\[ \frac{a_{i+1}}{a_i} = \frac{n-i}{i+1}(q^{n-i}-1)^{-1} \frac{g_{i+1}}{g_i}\]
for $g_i = \sum_{j=0}^{\min\{m,i\}} q^j \binom{i}{j}\frac{(m)_j}{(n)_j}$. Observe that if $i \ge m$ then
\[ \frac{g_{i+1}}{g_i} \le \max_{0 \le j \le m} \frac{\binom{i+1}{j}}{\binom{i}{j}} = \frac{i+1}{i+1-m} \le i+1.\]
Similarly, if $1\le i\le m-1$ then
\[ \frac{g_{i+1}}{g_i} =\frac{q^{i+1} \frac{(m)_{i+1}}{(n)_{i+1}}}{g_i} + \frac{\sum_{j=0}^{i} q^j \binom{i+1}{j}\frac{(m)_j}{(n)_j}}{\sum_{j=0}^{i} q^j \binom{i}{j}\frac{(m)_j}{(n)_j}}  \le \frac{q^{i+1}\frac{(m)_{i+1}}{(n)_{i+1}}}{q^i \frac{(m)_i}{(n)_i}}+\max_{0 \le j \le i} \frac{\binom{i+1}{j}}{\binom{i}{j}} = q \frac{m-i}{n-i} + i+1.\]
In particular, $g_{i+1}/g_i = O(i+q)$ holds uniformly for $1 \le i \le n-1$ which implies that \[\frac{a_{i+1}}{a_i} \ll \frac{n-i}{q^{n-i}} + \frac{n-i}{iq^{n-i-1}}\]
holds for $1\le i \le n-1$. In particular, $a_{i+1}/a_i = O(1)$ for $1 \le i \le n-1$ and $a_{i+1}/a_i \le 1/2$ if $n-i \ge C$ for some absolute constant $C>0$. This yields the estimate $\sum_{i=1}^{n} a_i \ll a_1$ as needed.

\begin{remark}\label{rem:bin}
When deriving \eqref{eq:Sgiiform2} via the triangle inequality above we replaced $(-q)^j$ with $q^j$ in \eqref{eq:Sgiiform}. If instead we keep the sign in \eqref{eq:Sgiiform}, we obtain in the same way
\begin{equation}\label{eq:Sgiiform3}
	S((g_{i,i})_{i=1}^{n}, (\{x_i\})_{i=1}^{n})- q^{-n}|\GLn|  \ll q^{-n}|\GLn|( a'_1 +a'_2+ \sum_{r=3}^{n} a_r)
\end{equation}
where $a_r$ is as in \eqref{eq:ardef} if $r\le n$, $a_r=0$ if $r>n$,
\[ a'_r= \binom{n}{r}\prod_{i=n-r+1}^{n}(q^i-1)^{-1}  \left|\sum_{j=0}^{\min\{m,r\}} (-q)^j \binom{r}{j}\frac{(m)_j}{(n)_j}\right|\]
if $r\le n$, and $a'_r=0$ if $r>n$. For $r=1$, $a'_1 \asymp q^{-n} |n-qm|$. The argument proving Corollary~\ref{cor:diag} already shows $\sum_{r=3}^{n}a_r \ll a_3$. A computation reveals $a'_2+a_3 \ll q^{2-2n}n+a'_1$ and we obtain
\begin{equation}\label{eq:diagpw2} S((g_{i,i})_{i=1}^{n}, (\{x_i\})_{i=1}^{n})= q^{-n}|\GLn| (1+O(q^{-n}|n-qm|+q^{2-2n}n \mathbf{1}_{n\ge 2})).
\end{equation}
\end{remark}
\subsection{Proof of Theorem~\ref{thm:submatrices}}
Let $1 \le m_1,m_2 \le n$. By \eqref{eq:trivial}, the estimate is trivial if $m_1+m_2 >n$. Suppose $m_1+m_2 \le n$. Let $E_{i,j}\in \Mn$ be the elementary matrix with $1$ in the $(i,j)$-th entry.  
By Lemma~\ref{lem:gensingle} with $(L_i(g))_{i=1}^{m_1m_2} = (g_{i,j})_{1\le i \le m_1,\,1\le j \le m_2}$, we find that (since $g_{i,j}=\Tr(E_{j,i}g)$)
\begin{equation}\label{eq:entrypt} S(  (g_{i,j})_{\substack{1\le i \le m_1 \\ 1 \le j \le m_2}}, (\{x_{i,j}\})_{\substack{1 \le i \le m_1 \\ 1 \le j \le m_2}})=q^{-m_1 m_2 }|\GLn| \bigg( 1+O\bigg( \sum_{r=1}^{n} f(r) q^{-nr+\binom{r}{2}}\bigg)\bigg) 
\end{equation}
for any $x_{i,j} \in \FF_q$, where
\[ f(r) := |\{(a_{i,j})_{1 \le i \le m_1,\, 1\le j \le m_2}  \in \FF_q^{m_1 m_2}: \rank\big(\sum_{1 \le i \le m_1,\,1 \le j \le m_2} a_{i,j} E_{j,i}\big)=r\}|.\]
The function $f(r)$ counts the number of $m_1 \times m_2$ matrices over $\FF_q$ of rank $r$, a count which was evaluated by Landsberg \cite{Landsberg}. It is $0$ if $r > \min\{m_1,m_2\}$, and otherwise it is 
\begin{equation}\label{eq:lands}
f(r) = \prod_{i=0}^{r-1} \frac{(q^{m_1}-q^i)(q^{m_2}-q^i)}{q^{r}-q^i} \ll q^{(m_1+m_2-r)r}.
\end{equation}
If $m_1 + m_2 \le n$, it follows that the error term in \eqref{eq:entrypt} is 
\[ \sum_{r=1}^{n} f(r) q^{-nr+\binom{r}{2}}\ll \sum_{r=1}^{\min\{m_1,m_2\}} q^{(m_1+m_2-r)r-nr+\binom{r}{2}}.\]
Observe $r\mapsto (m_1+m_2-r)r-nr+\binom{r}{2} = (m_1+m_2-n-\frac{1}{2}-\frac{r}{2})r $ is a decreasing integer-valued function in $1\le r \le \min\{m_1,m_2\}$, proving the error term in \eqref{eq:entrypt} is $\ll q^{m_1+m_2-1-n}$. 
\subsection{Proof of Lemma~\ref{lem:form}}
Recall $m_1\le m_2$ is assumed. Let $X=(x_{i,j})_{1\le i\le m_1,\, 1\le j \le m_2} \in \mathrm{M}_{m_1 \times m_2}(\FF_q)$, and recall the rank of $X$ is $r$. 
Evaluating 
\[S(  (g_{i,j})_{\substack{1\le i \le m_1 \\ 1 \le j \le m_2}}, (\{x_{i,j}\})_{\substack{1 \le i \le m_1 \\ 1 \le j \le m_2}})\]
is the same as counting matrices 
\[Y_1 \in \mathrm{M}_{m_1 \times (n-m_2)}(\FF_q),\qquad Y_2 \in \mathrm{M}_{(n-m_1) \times m_2}(\FF_q), \qquad Y_3 \in \mathrm{M}_{(n-m_1)\times (n-m_2)}(\FF_q)\]
such that
\[ g:=\begin{pmatrix} X && Y_1 \\ Y_2 &&Y_3 \end{pmatrix} \in \GLn.\]
We write $X$ as
\[ X = X_1 D_{r} X_2\]
where $X_i \in \mathrm{GL}_{m_i}(\FF_q)$ ($i=1,2$) and $D_r \in \mathrm{M}_{m_1\times m_2}(\FF_q)$ is supported on the diagonal, which consists of $r$ ones followed by zeros. Then $g\in \GLn$ holds if and only if 
\[ \begin{pmatrix} X^{-1}_1 && 0 \\ 0 && I_{n-m_1} \end{pmatrix}g \begin{pmatrix} X^{-1}_2 && 0 \\ 0 &&I_{n-m_2} \end{pmatrix} \in \GLn\]
holds. Let $Y'_1 := X^{-1}_1 Y_1$ and $Y'_2:= Y_2 X^{-1}_2$. Then simplifying the above product of three matrices we see that we are counting matrices $Y'_1,Y'_2,Y_3$ such that
\[ g':=\begin{pmatrix} D_r && Y'_1 \\ Y'_2 &&Y_3 \end{pmatrix}\]
is invertible. We first select $Y'_2$: as the first $m_2$ columns of $g'$ must be linearly independent, we select the first $r$ columns of $Y'_2$ arbitrarily; there are $q^{(n-m_1)r}$ choices. The next $m_2-r$ columns of $Y'_2$ are chosen to be linearly independent vectors in $\FF_q^{n-m_1}$; this is possible only if $m_2-r \le n-m_1$ holds, and there are $\prod_{i=0}^{m_2-r-1}(q^{n-m_1}-q^i)$ choices. It remains to choose $Y'_1,Y'_3$ in such a way that the $j$th column of $g'$ is linearly independent of the earlier ones for every $n\ge j>m_2$. Proceeding column by column we find there are $\prod_{i=m_2}^{n-1}(q^n-q^i)$ choices for these columns. This gives the desired result.
\subsection{Proof of Theorem~\ref{thm:submatricestv}}
If $X,Y$ are two  random variables taking values in a finite set $\Omega$, we have \cite[p.~67]{ABR}
\[ d_{\TV}(X,Y) = \sum_{x \in \Omega'} (\PP(X=x) - \PP(Y=x))\]
where $\Omega' := \{ x \in \Omega: \PP(X=x)\ge \PP(Y=x)\}$.
We apply this equality with
\[ X = (g_{i,j})_{\substack{1\le i\le m_1 \\ 1 \le j \le m_2}},\qquad  Y=(U_{i,j})_{\substack{1 \le i \le m_1 \\ 1 \le j \le m_2}}.\]
Suppose without loss of generality that $m_1 \le m_2$. Then, for every $(x_{i,j})_{1\le i \le m_1,\, 1\le j \le m_2} \in \mathrm{M}_{m_1\times m_2}(\FF_q)$,
\begin{equation}\label{eq:Pyprov}
\PP(Y=(x_{i,j})_{\substack{1\le i\le m_1 \\ 1\le j\le m_2}}) = q^{-m_1 m_2}
\end{equation}
and, by Lemma~\ref{lem:form},
\begin{equation}\label{eq:Pxprov}
\PP(X=(x_{i,j})_{\substack{1\le i\le m_1 \\ 1\le j\le m_2}}) = q^{-m_1 m_2} \frac{\prod_{j=n-m_1-m_2+r+1}^{n-m_1}(1-q^{-j})}{\prod_{j=n-m_2+1}^{n}(1-q^{-j})}=: q^{-m_1 m_2} h(r)
\end{equation}
where $r$ is the rank of $(x_{i,j})_{1\le i \le m_1,\, 1\le j \le m_2} \in \mathrm{M}_{m_1\times m_2}(\FF_q)$ and
\[ h(r)=\frac{\prod_{j=n-m_1-m_2+r+1}^{n-m_1}(1-q^{-j})}{\prod_{j=n-m_2+1}^{n}(1-q^{-j})}. \]
Observe $h(r)=0$ if $0\le r\le m_1+m_2-1-n$, while for $m_1+m_2-n\le r \le m_1$, $h$ is positive and increasing. Moreover,
\[ h(m_1)=\frac{\prod_{j=n-m_2+1}^{n-m_1}(1-q^{-j})}{\prod_{j=n-m_2+1}^{n}(1-q^{-j})}= \prod_{j=n-m_1+1}^{n}(1-q^{-j})^{-1}>1\]
while
\[ h(m_1-1) = \frac{\prod_{j=n-m_2}^{n-m_1}(1-q^{-j})}{\prod_{j=n-m_2+1}^{n}(1-q^{-j})}=(1-q^{-(n-m_2)}) \prod_{j=n-m_1+1}^{n}(1-q^{-j})^{-1}<1\]
where in the last inequality we used $\prod_{j=n-m_1+1}^{n}(1-a_j)\ge 1-\sum_{j=n-m_1+1}^{n} a_j$. 
Thus $\Omega'$ above is the set of matrices in $\mathrm{M}_{m_1 \times m_2}(\FF_q)$ of rank  $m_1$. It follows that the total variation distance in Theorem~\ref{thm:submatricestv} is precisely
\[  q^{-m_1m_2}(h(m_1)-1) \sum_{\substack{A \in \mathrm{M}_{m_1\times m_2}(\FF_q)\\\rank(A)=m_1}}1. \]
We have $h(m_1)-1 \asymp q^{-(n-m_1+1)}$ using $\log(1+x) \asymp x$ for $0\le x \le 1$. The proof is concluded via Landsberg's result  \cite{Landsberg}, already used when proving Theorem~\ref{thm:submatrices}, saying the $A$-sum is $\prod_{i=0}^{m_1-1} (q^{m_2}-q^i) \asymp q^{m_1 m_2}$.
\begin{remark}\label{rem:alt}
Suppose $m_1 \le m_2$. In the notation of Theorem~\ref{thm:submatrices}, formulas \eqref{eq:Pyprov}--\eqref{eq:Pxprov} above give
\[S(  (g_{i,j})_{\substack{1\le i \le m_1 \\ 1 \le j \le m_2}}, (\{x_{i,j}\})_{\substack{1 \le i \le m_1 \\ 1 \le j \le m_2}})=q^{-m_1 m_2 }|\GLn| ( 1+O( |h(r)-1|))\]
and this error is optimal, where $r$ is the rank of $(x_{i,j})_{1\le i \le m_1,\, 1\le j \le m_2} \in \mathrm{M}_{m_1\times m_2}(\FF_q)$. If $0\le r\le m_1+m_2-1-n$ then $h(r)=0$ and this recovers Theorem~\ref{thm:submatrices}. If $r=m_1$ then $|h(m_1)-1|=h(m_1)-1\asymp q^{-(n-m_1+1)}$, which improves Theorem~\ref{thm:submatrices} by a factor of size $q^{\min\{m_2,n-m_1+1\}}$. If $\max\{0,m_1+m_2-n\}\le r<m_1$ then $|h(r)-1|=1-h(r) \ll q^{-(n-m_1-m_2+r+1)}$ by using $-\log(1-x)=x+O(x^2)$ for $0\le x\le 1/2$; this improves Theorem~\ref{thm:submatrices} by a factor of size $q^{\min\{r,n-m_1-m_2+r+1\}}$.
\end{remark}
\subsection{Proof of Theorem~\ref{thm:pvfunc}}
Our starting point is Lemma~\ref{lem:gen}. We specialize the functionals there to $(L_i(g))_{i=1}^{k} = (\Tr( E_{j,i} g))_{1\le i,j\le n}$. The lemma reduces the proof to the bound
\begin{equation}\label{eq:reduce} \sum_{r=1}^{n} q^{-nr+\binom{r}{2}}\sum_{\substack{ A=(a_{i,j})_{1\le i,j\le n} \in \Mn \\ \rank A =r}} \prod_{1\le i,j\le n} \frac{|\widehat{I_{i,j}}(-a_{i,j})|}{\widehat{I_{i,j}}(0)} \ll \left( 3\frac{q^{\frac{n-1}{2n-1}}(\log (h)+3)}{h}\right)^{2n-1}.
\end{equation}
We shall show the contribution of matrices $A$ of rank $r$ to the above sum is 
\begin{equation}\label{eq:rankr}
\sum_{\substack{ A=(a_{i,j})_{1\le i,j\le n} \in \Mn \\ \rank A =r}} \prod_{1\le i,j\le n} \frac{|\widehat{I_{i,j}}(-a_{i,j})|}{\widehat{I_{i,j}}(0)} \ll \binom{n}{r}^2 M^{r(2n-r)}
\end{equation}
where $M:=\sum_{x\in \FF_q}|\widehat{I_{i,j}}(x)|/\widehat{I_{i,j}}(0)$; observe $M$ does not depend on $i,j$ because all the $I_{i,j}$ are intervals of length $h$. Moreover, $M\le q (\log(h)+3)/h$ by \eqref{eq:FourierL1}. By \eqref{eq:rankr}, we bound the left-hand side of \eqref{eq:reduce} by
\begin{equation}\label{eq:totalranks}
\ll  \sum_{r=1}^{n} q^{-nr+\binom{r}{2}} \binom{n}{r}^2 M^{r(2n-r)} \le  \sum_{r=1}^{n} \binom{n}{r}^2 \left( \frac{q^{\frac{1}{2}-\frac{1}{2(2n-r)}} (\log (h)+3)}{h}\right)^{r(2n-r)}
\end{equation}
using $M\le q(\log(h)+3)/h$. We consider two cases. When 
\begin{equation}\label{eq:largeh} \frac{q^{\frac{1}{2}-\frac{1}{2(2n-1)}} (\log (h)+3)}{h} \le 1,
\end{equation}
the right-hand side of \eqref{eq:totalranks} is at most
\[  \max_{1\le r\le n}\left( \frac{q^{\frac{1}{2}-\frac{1}{2(2n-r)}} (\log(h)+3)}{h}\right)^{r(2n-r)} \sum_{r=1}^{n} \binom{n}{r}^2 =\left( \frac{q^{\frac{n-1}{2n-1}}(\log(h)+3)}{h}\right)^{2n-1} \left(\binom{2n}{n}-1\right)\]
which implies Theorem~\ref{thm:pvfunc} when \eqref{eq:largeh} holds. When \eqref{eq:largeh} fails to hold we simply invoke \eqref{eq:trivial}.

It remains to establish \eqref{eq:rankr}. Any $A\in \Mn$ has  $\binom{n}{r}^2$ $r \times r$ minors, and if $\rank A =r$ then at least one of these minors is invertible. We fix an $r \times r$ minor and consider the contribution to \eqref{eq:rankr} of matrices with that minor being invertible. After permuting rows and columns, such matrices have the block form
\[\begin{pmatrix}
	U & V \\ W & W U^{-1}V
\end{pmatrix}\]
where $U\in \mathrm{GL}_r(\FF_q)$, i.e.~the lower $(n-r)\times (n-r)$ block is determined by the rest of the blocks, that have $n^2-(n-r)^2=r(2n-r)$ entries in total. Thus such matrices contribute at most $M^{r(2n-r)}$ to \eqref{eq:rankr}, establishing the required bound. Here we used that the weight $|\widehat{I_{i,j}}(-a_{i,j})|/\widehat{I_{i,j}}(0)$ is at most $1$.
\begin{remark}
For $r=n$, the bound \eqref{eq:rankr} should be sharp (because the left-hand side is precisely $M^{n^2}$ if $A$ ranges over all $\Mn$). Consequently, the most optimistic bound on \eqref{eq:reduce} is $q^{-n^2+\binom{n}{2}}M^{n^2} \le q^{\binom{n}{2}}((\log(h)+3)/h)^{n^2}$, which tends to $0$ as $h/ (q^{(n-1)/(2n)}(\log(q)+3))\to \infty$. This is the limit of our Fourier method, and is nearly reached by Theorem~\ref{thm:pvfunc}. 
\end{remark}
\subsection{Proof of Lemma~\ref{lem:genzero}}\label{sec:iden}
Let $S \subseteq \{1,2,\ldots,n\} \times \{1,2,\ldots,n\}$. We apply \eqref{eq:Fourierinput} with $k=|S|$, $(L_i(g))_{i=1}^{k} = (g_s)_{s \in S}$ and $I_i = \{0\}$ for all $i$. The left-hand side of \eqref{eq:Fourierinput} becomes $T_q(n,S)$ and we obtain that 
\[T_q(n,S) =
q^{\binom{n}{2}-|S|}\sum_{ (a_s)_{s\in S} \in \FF^S_q} (-1)^{\rank(\sum_{s \in S} a_s E^T_s )}\prod_{i=1}^{n-\rank(\sum_{s \in S} a_s E^T_s)}(q^i-1)\]
where $E_s\in \Mn$ is the elementary matrix with $1$ in the $s$-th entry and $0$ otherwise. We now rearrange the sum according to the value of $\rank(\sum_{s \in S} a_s E^T_s)$, which we denote by $r$.
\section{More on total variation}\label{sec:tv}
We pose the following problem.
\begin{prob}\label{prob}
Let $(U_i)_{i=1}^{k}$ be i.i.d.~random variables uniformly distributed on $\FF_q$. Let $g=g_n$ be a matrix chosen uniformly at random from $\GLn$. Determine the order of magnitude of 
\[ A_{n,k,q}:=\max_{L_1,\ldots,L_k} d_{\TV}( (L_i(g))_{i=1}^{k},(U_i)_{i=1}^{k})\]
where the maximum is taken over $k$-tuples of linearly independent functionals $L_i\colon  \Mn \to \FF_q$.
\end{prob}
We already saw $A_{n,k,q}\ll q^{\frac{k}{2}-n}$ in Theorem~\ref{thm:dtv}. In this section we record additional progress on the problem. We shall use the notation of Problem~\ref{prob}.
\begin{lem}\label{lem:tvtriv}
Uniformly in $1\le k \le n^2$, we have $A_{n,k,q}\ll 1/q$. For $k=n^2$ we have a matching lower bound.
\end{lem}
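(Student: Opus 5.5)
The upper bound will come from a direct comparison of the law of $(L_i(g))_{i=1}^{k}$ with the law of $(L_i(M))_{i=1}^{k}$, where $M$ is uniform on $\Mn$. Since the $L_i$ are linearly independent, the map $M\mapsto (L_i(M))_{i=1}^{k}$ is a surjective linear map $\Mn\to\FF_q^k$, so $(L_i(M))_{i=1}^{k}$ has exactly the distribution of $(U_i)_{i=1}^{k}$. Total variation does not increase under pushforward by a deterministic map, so
\[ d_{\TV}((L_i(g))_{i=1}^{k},(U_i)_{i=1}^{k}) \le d_{\TV}(g,M). \]
We then compute the right-hand side exactly. The law of $g$ is uniform on $\GLn\subseteq\Mn$, so $d_{\TV}(g,M)=\PP(M\notin\GLn)=1-|\GLn|/q^{n^2}=1-\prod_{i=1}^{n}(1-q^{-i})$. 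By the inequality $\prod(1-a_i)\ge 1-\sum a_i$ (already used in the proof of Theorem~\ref{thm:submatricestv}), this is at most $\sum_{i\ge1}q^{-i}=1/(q-1)\le 2/q$. The bound is uniform in $k$, which gives $A_{n,k,q}\ll 1/q$.

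For the lower bound at $k=n^2$, the functionals $L_1,\ldots,L_{n^2}$ form a basis of the dual of $\Mn$, so $g\mapsto (L_i(g))_{i=1}^{n^2}$ is a linear bijection $\Mn\to\FF_q^{n^2}$. Total variation is preserved under bijections, so for every admissible choice the distance equals $d_{\TV}(g,M)=1-\prod_{i=1}^{n}(1-q^{-i})\ge 1-(1-q^{-1})=1/q$. Hence $A_{n,n^2,q}\ge 1/q$, which matches the upper bound.

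Neither step presents a real obstacle. The only point requiring care is the observation that a surjective linear map pushes the uniform measure on $\Mn$ forward to the uniform measure on $\FF_q^k$, since every fiber is a coset of the kernel and so all fibers have equal size. Together with the data-processing (pushforward) inequality for total variation, this is all the argument needs, and the Fourier machinery of \S\ref{sec:strategy} is not used.
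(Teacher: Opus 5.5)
Your proof is correct and takes essentially the paper's route. The paper realizes $g$ as $m$ conditioned on invertibility and cites the general fact $d_{\TV}(X\mid E,X)\le 1-\PP(E)$ for $X=(L_i(m))_{i=1}^{k}$, which is what your pushforward (data-processing) step together with the exact value $d_{\TV}(g,M)=1-|\GLn|/q^{n^2}$ establishes; your lower bound for $k=n^2$, via the linear bijection and $1-\prod_{i=1}^{n}(1-q^{-i})\ge 1/q$, is identical to the paper's.
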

\begin{proof}
Let $m=m_n$ be a matrix chosen uniformly at random from $\Mn$. Let $E_{\mathrm{Inv}}$ be the event that $m$ is invertible. Note that $g$ and $m\mid E_{\mathrm{Inv}}$ have the same distribution. Moreover, $(L_i(m))_{i=1}^{k}$ has the same distribution as $(U_i)_{i=1}^{k}$. Thus
\[ d_{\TV}( (L_i(g))_{i=1}^{k},(U_i)_{i=1}^{k}) = d_{\TV}( (L_i(m))_{i=1}^{k}\mid E_{\mathrm{Inv}},(L_i(m))_{i=1}^{k}).\]
We now use the general fact that $d_{\TV}(X\mid E, X)\le 1-\PP(E)$ holds for any random variable $X$ and an event $E$ that happens with positive probability, see e.g.~\cite[Lem.~6.2]{Svante}. To conclude the upper bound, we observe $\PP(E_{\mathrm{Inv}})=|\GLn|/q^{n^2} = 1 +O(1/q)$ with an absolute implied constant. As for the lower bound, if $k=n^2$ then $(L_i(g))_{i=1}^{n^2}$ and  $(L_i(m))_{i=1}^{n^2}$, after a linear change of variables, are equal to $g$ and $m$, and so
\[ d_{\TV}( (L_i(g))_{i=1}^{k},(U_i)_{i=1}^{k}) =d_{\TV}(g,m)=  d_{\TV}(m\mid \{m \in E_{\mathrm{Inv}}\},m)=1-\PP(E_{\mathrm{Inv}})\asymp 1/q\]
where we used the general fact $d_{\TV}(X,X \mid \{ X \in  E\}) = 1-\PP(X\in E)$ \cite[Eq.~(7.25)]{Aldous} and the bound $1-\PP(E_{\mathrm{Inv}}) = 1-\prod_{i=1}^{n}(1-q^{-i})\ge 1-(1-q^{-1})=1/q$.
\end{proof}
For $1\le k \le n$ we can save a factor of $q^{-\frac{1}{2}}$ over Theorem~\ref{thm:dtv}.
\begin{thm}
For $1\le k \le n$, $A_{n,k,q} \ll q^{\frac{k}{2}-n-\frac{1}{2}}$.
\end{thm}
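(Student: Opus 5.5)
We plan to start from Lemma~\ref{lem:dtv}, which gives
\[ d_{\TV}\big((L_i(g))_{i=1}^{k},(U_i)_{i=1}^{k}\big)\ll \sqrt{\sum_{r=1}^{n} f(r)\, q^{2(-nr+\binom{r}{2})}}, \]
where $f(r)$ is the rank distribution of the code $C=\Sp_{\FF_q}(A_1,\ldots,A_k)$. Theorem~\ref{thm:dtv} used only $f(r)\le q^k$. The dominant term comes from $r=1$, namely $f(1)q^{-2n}$. Saving $q^{-1/2}$ in the distance therefore amounts to saving a factor $q$ in $f(1)$, i.e.\ proving $f(1)\ll q^{k-1}$ for $k\le n$. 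The terms $r\ge 2$ are already fine: the trivial bound gives $\sum_{r\ge2} q^{k}q^{2(-nr+\binom r2)}\ll q^{k-4n+2}$, which is at most $q^{k-2n-1}$ since $2n\ge 3$ once $n\ge 2$. (The case $n=1$ forces $k=1$, and then Lemma~\ref{lem:tvtriv} gives $A_{1,1,q}\ll 1/q=q^{k/2-n-1/2}$.)

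The heart of the proof is the bound $f(1)\le q^{k-1}$ (or $\ll q^{k-1}$) for a $k$-dimensional subspace $C\subseteq\Mn$ with $k\le n$. This is where Schaathun's bound enters. I would prove it directly. Rank-one matrices are exactly the $uv^T$ with $u,v\ne 0$. Let $R=\{uv^T\}\cap C$, and work projectively with the set $P\subseteq\PP^{n-1}\times\PP^{n-1}$ of pairs $([u],[v])$ for which $uv^T\in C$. Then $f(1)=(q-1)|P|$, so it suffices to show $|P|\le (q^{k-1}-1)/(q-1)$, the number of points of $\PP(C)=\PP^{k-1}$ (up to a constant factor).

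Step 1 of the argument uses the structure of rank-one matrices in a subspace. Consider the projection of $P$ to the first factor, and for each $[u]$ the fiber $V_u=\{v: uv^T\in C\}\cup\{0\}$, which is a linear subspace. For distinct $[u_1],\ldots,[u_s]$ in the projection, the subspaces $u_iV_{u_i}^T\subseteq C$ are independent: a matrix in $u_iV^T$ has column space $\langle u_i\rangle$, so a sum over linearly independent $u_i$ is direct. A cleaner route is induction on $k$. Choose a hyperplane $H\subset C$ that avoids many rank-one points. Alternatively, count incidences: every $2$-dimensional subspace of $C$ spanned by two rank-one matrices $u_1v_1^T,u_2v_2^T$ with $u_1\not\parallel u_2$ and $v_1\not\parallel v_2$ contains exactly $2$ rank-one lines, whereas a span sharing $u$ or $v$ consists entirely of rank one. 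I expect this combinatorial bound to be the main obstacle. What I would try first is to show that the rank-one projective points of $\PP(C)$ cannot fill a positive proportion exceeding $1/q$ unless $C$ lies inside some $u\FF_q^{n\,T}$ or $\FF_q^n v^T$, in which case $C$ is entirely rank one and has dimension at most $n$. Even then, though, $f(1)=q^k-1$, which would break the claim.

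So the obstacle is genuine: the example $L_i(g)=g_{1,i}$ has $f(1)=q^k-1$. For that example, however, the distance is computable exactly. The first row of $g$ is uniform on $\FF_q^n\setminus\{0\}$, so the first $k$ coordinates have total variation of order $q^{-n}\cdot q^{k}\cdot q^{-k}$, which is roughly $q^{-n}$ and is small. The fix is therefore \emph{not} to use Lemma~\ref{lem:dtv} crudely. Instead I would use the exact Fourier expansion \eqref{eq:PPdiff} and split $\mathbf a$ according to rank, and then exploit cancellation in $\sum_{\mathbf x}$ by decomposing $C$. The $r=1$ part $\sum_{\mathbf a:\rank=1}\psi(-\mathbf a\cdot\mathbf x)$ is the Fourier transform of the indicator of the rank-one cone $R\cup\{0\}$. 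That cone is a union of subspaces $u V_u^T$ and $V'_v v^T$, and the Fourier transform of a subspace indicator is supported on its annihilator with $L^1$ mass $q^{k}$ regardless of dimension. I would bound the $L^1$ norm of the $r=1$ contribution to $\sum_{\mathbf x}|\cdot|$ by $q^{-k}q^{-n}\cdot q^{k}\cdot(\text{number of maximal subspaces in the cone})$ via inclusion–exclusion. For $k\le n$ this gives $\ll q^{-n}\cdot\#\{\text{components}\}\ll q^{k/2-n-1/2}$ once the number of components is shown to be $\ll q^{(k-1)/2}$, and otherwise I would fall back on Cauchy–Schwarz as in Lemma~\ref{lem:dtv} for the part of $R$ lying outside large linear components.
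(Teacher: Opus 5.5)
\textbf{There is a genuine gap: no case of the proof is actually completed.}

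Your reduction is correct as far as it goes. By Lemma~\ref{lem:dtv}, with the $r\ge 2$ terms and the case $n=1$ handled as you do, everything hinges on the rank-one count $f(1)$. You also rightly observe that $f(1)\ll q^{k-1}$ is false in general, e.g.\ for $L_i(g)=g_{1,i}$.

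But the proposal never establishes a bound in any case. The projection/incidence ideas in your Step~1 are left unfinished. The closing ``fix'' rests on three unproved claims:
\begin{itemize}
\item that the rank-one cone is covered by $\ll q^{(k-1)/2}$ linear components;
\item that inclusion--exclusion over their overlapping intersections stays under control;
\item that Cauchy--Schwarz handles ``the part of $R$ outside large linear components''.
\end{itemize}
As written, this does not prove the theorem.

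\textbf{The missing idea is a case split} on whether $V=\Sp_{\FF_q}(A_1,\ldots,A_k)$ contains a matrix of rank at least $2$. You came within a sentence of it when you noted that a space consisting only of rank-one matrices lies in some $u\FF_q^{n\,T}$ or $\FF_q^n v^T$, but you then abandoned the dichotomy instead of separating the two cases.

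\emph{Case 1: $V$ contains a matrix of rank $\ge 2$.} Fix a $2\times 2$ minor that is nonzero at that matrix. As a function of $(a_1,\ldots,a_k)$, this minor of $\sum_i a_iA_i$ is a homogeneous quadratic polynomial that is not identically zero. It vanishes at every $\mathbf a$ with $\rank(\sum_i a_iA_i)=1$. Schwartz--Zippel then gives $f(1)\le 2q^{k-1}$, and Lemma~\ref{lem:dtv} yields $q^{k/2-n-1/2}$ directly.

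\emph{Case 2: every nonzero element of $V$ has rank one.} Then $V=\{uw^T:w\in W\}$ or its transpose. A linear change of basis of $V$ does not affect the total variation distance. Moreover, $g$ and $PgQ$ are equidistributed for fixed $P,Q\in\GLn$. Together these turn $(L_i(g))_{i=1}^k$ into $k$ entries of a single row or column of $g$. Theorem~\ref{thm:submatricestv} with $\min\{m_1,m_2\}=1$ then gives total variation $O(q^{-n})\le O(q^{k/2-n-1/2})$. This is equivalently the exact computation you already did for $g_{1,i}$.

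No $L^1$ Fourier analysis of the rank-one cone is needed. The bad example you found is the \emph{only} obstruction to $f(1)\ll q^{k-1}$, and it is dispatched by exact computation rather than by Lemma~\ref{lem:dtv}.
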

\begin{proof}
For $n=1$ this follows from Lemma~\ref{lem:tvtriv} so we assume $n\ge 2$. Let $L_1,\ldots,L_k$ be $k$ linearly independent functionals of the form $L_i(g)=\Tr(A_i g)$. We first suppose that 
\begin{equation}\label{eq:vdef}
V:=\Sp_{\FF_q}(A_1,\ldots,A_k)
\end{equation}
contains a matrix of rank at least $2$. By Lemma~\ref{lem:dtv}, it suffices to prove 
\[\sum_{r=1}^{n} f(r) q^{2\left(-nr+\binom{r}{2}\right)} \ll q^{k-2n-1}\]
with an absolute implied constant, where $f$ is defined in \eqref{eq:deff}. The contribution of $r\ge 2$ is $\ll q^{2(-2n+1)+k}\ll q^{k-2n-1}$ since $n\ge 2$, $f(r)\le q^k$ and $r\mapsto -nr+\binom{r}{2}$ is strictly decreasing. It then suffices to show $f(1) \ll q^{k-1}$; recall $f(1)$ is the number of rank-$1$ matrices in $V$. Since $V$ contains a matrix of rank at least $2$, there is a $2\times 2$ minor of $\sum_{i=1}^{k} a_i A_i$ that does not vanish as $a_i$ range over $\FF_q$, i.e.
\[ g(a_1,\ldots,a_k):=\det\begin{pmatrix}(\sum_{i=1}^{k} a_i A_i)_{i_1,j_1} & (\sum_{i=1}^{k} a_i A_i)_{i_1,j_2} \\ (\sum_{i=1}^{k} a_i A_i)_{i_2,j_1} & (\sum_{i=1}^{k} a_i A_i)_{i_2,j_2}\end{pmatrix}\]
is not identically zero for certain $1\le i_1,i_2,j_1,j_2\le n$.
If $\sum_{i=1}^{k} a_i A_i$ has rank $1$ then $g(a_1,\ldots,a_k)=0$, so $f(1)$ is bounded by the number of $\FF_q$-zeros of $g$. By the Schwartz--Zippel Lemma, since $g$ is a nonzero quadratic function we have $f(1) \le 2q^{k-1}$ as needed. 

It remains to consider the case where all the nonzero matrices in $V$ are of rank $1$. After a linear change of variables, we may assume without loss of generality that $A_i = E_{1,i}$ for $1\le i \le k$, say\footnote{Any rank-$1$ matrix is of the form $A_i=u_i v_i^T$ for some column vectors $u_i$ and $v_i$. If $\rank(A_i+A_j)=1$ then either $u_i,u_j$ are linearly dependent or $v_i,v_j$ are linearly dependent. Applying this to every pair of $A_i$ it follows that either the $u_i$ span a $1$-dimensional space or the $v_i$ do. Assume without loss of generality that we are in the first case, in which case after further relabeling we may assume that the $u_i$ are equal to a common $u$. It is then a matter of replacing $g$ by $g'=VgU$ for invertible $V,U\in \GLn$ with $e_i^T V=v_i^T$ and $Ue_1=u$: $g$ and $g'$ have the same distribution, while $\Tr(u_iv_i^T g) = \Tr(E_{1,i}g')$ for all $i$.}. In this case, Theorem~\ref{thm:submatricestv} with $m_1=k$ and $m_2=1$ gives the stronger bound $O(q^{-n})$ on the total variation distance.
\end{proof}
The main result of this section is the following bound on $A_{n,k,q}$.
\begin{thm}\label{thm:tv2}
	Uniformly in $n\le k \le n^2$ and $q$, we have $A_{n,k,q}\ll q^{\lfloor k/n\rfloor /2 -n/2 - 1/2}$.
\end{thm}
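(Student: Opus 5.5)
The plan is to feed bounds on $f$ into Lemma~\ref{lem:dtv}. Fix $k$ linearly independent $A_1,\ldots,A_k$ and set $V:=\Sp_{\FF_q}(A_1,\ldots,A_k)$ as in \eqref{eq:vdef}. Write $m:=\lfloor k/n\rfloor$, so $1\le m\le n$ and $k<(m+1)n$. By Lemma~\ref{lem:dtv} it suffices to show
\[ \sum_{r=1}^{n} f(r)\, q^{2(-nr+\binom{r}{2})} \ll q^{m-n-1}. \]
The model case is $V$ equal to all matrices supported on the first $m$ rows. There $f(r)\asymp q^{r(m+n-r)}$, and the $r$-th term is $\asymp q^{r(m-n-1)}$. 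This suggests the target bound
\begin{equation*}
f(r)\le C^{r}\, q^{r(n+m-r)}\qquad (1\le r\le n)
\end{equation*}
for an absolute constant $C$. Given it, the $r$-th term is $\le C^r q^{r(m-n-1)}$. For $m\le n-1$ this is a geometric series with ratio $Cq^{m-n-1}\le C/4$. For $m=n$ we only need $O(1/q)$, which Lemma~\ref{lem:tvtriv} already gives. If $C$ is not small enough, I would treat small $q$, or $n-m$ bounded, separately using the crude bound $f(r)\le q^k$.

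\textbf{Rank one.} Every rank-$1$ matrix is $uv^T$, with $u$ determined up to scalar. For a projective point $[u]\in\PP(\FF_q^n)$ let $K_u:=\{v: uv^T\in V\}$ and $d_u:=\dim K_u$. Then
\[ f(1)=\sum_{[u]}(q^{d_u}-1). \]
The key observation concerns linearly independent $u_1,\ldots,u_s$. The spaces $u_i\otimes K_{u_i}\subseteq V$ sit in independent "column directions", so their sum is direct. Hence $\sum_i d_{u_i}\le k$.

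Consequently the set $P_t:=\{[u]: d_u\ge t\}$ spans a space of dimension at most $\lfloor k/t\rfloor$: take a basis of its span drawn from $P_t$. So $|P_t|\le q^{\min\{n,\lfloor k/t\rfloor\}-1}\cdot O(1)$. Summation by parts then gives
\[ f(1)\ll \sum_{t=1}^{n} q^{t+\min\{n,\lfloor k/t\rfloor\}-1}. \]
The exponent $t+\min\{n,\lfloor k/t\rfloor\}$ is at most $m+n$:
\begin{itemize}
\item for $t\le m$ it is $t+n\le m+n$;
\item for $m+1\le t\le n$, convexity of $t+k/t$ bounds it by $\max\{m+1+k/(m+1),\,n+k/n\}<m+n+1$.
\end{itemize}
To avoid losing a factor of $n$ from the number of $t$, I would check that the exponent drops linearly away from the two endpoints $t=m$ and $t=n$. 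This should give $f(1)\ll q^{m+n-1}$, and hence an $r=1$ term $\ll q^{m-n-1}$.

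\textbf{Higher rank.} I would run the same argument with $r$-dimensional column spaces $W$ in place of lines. For such $W$ let $M_W$ be the matrices with column space inside $W$, and set $d_W:=\dim(V\cap M_W)$. Then
\[ f(r)\le \sum_{W}q^{d_W}, \]
where the sum runs over the roughly $q^{r(n-r)}$ subspaces $W$ of dimension $r$. I would again try to show that subspaces $W$ with large $d_W$ are rare, for instance by choosing a flag and peeling off one column direction at a time, so that the rank-$1$ bound is applied $r$ times. The aim is $\sum_W q^{d_W}\le C^r q^{r(n-r)+rm}$.

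\textbf{Main obstacle.} I expect this last step to be the hardest. The direct-sum argument for lines does not immediately control how many $W$ carry large $d_W$. The crude bounds $\min\{q^k,\,q^{r(n-r)+rn}\}$ are insufficient for intermediate $r\approx m/2$. If the flag argument fails, I would fall back on Delsarte-type rank-distribution bounds for rank-metric codes, in the spirit of the Schaathun-type bound mentioned earlier. That would only be needed for the range of $r$ where the crude bounds are not already $\ll q^{r(m-n-1)}$.
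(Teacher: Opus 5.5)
Your reduction via Lemma~\ref{lem:dtv}, your direct-sum observation, and your handling of $k=n^2$ are fine. Both bounds on $f$, however, have genuine gaps.

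\textbf{Rank one.} Your bound $|P_t|\le (q^{s_t}-1)/(q-1)$ with $s_t=\min\{n,\lfloor k/t\rfloor\}$ is correct. Summing it over $t$ cannot give $f(1)\ll q^{m+n-1}$ with an absolute constant, because the hoped-for linear drop of the exponent away from $t=m$ and $t=n$ does not occur. For $m\le t\le n$ one checks that $t-1+\min\{n,\lfloor k/t\rfloor\}=m+n-1$ exactly when $(t-m)(n-t)\le k-mn$. For example, $n=5$, $k=14$ gives exponent $6$ at each of $t=2,3,4,5$. For $k=mn+n-1$ with $n-m\approx 2\sqrt{n}$, every $t\in[m,n]$ ties, so your bound is $\gg\sqrt{n}\,q^{m+n-1}$ and costs a factor $n^{1/4}$ in the total variation estimate. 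What is missing is the \emph{joint} constraint on the $P_t$. They are nested, so you can draw from them a basis adapted to the flag $\Sp P_n\subseteq\cdots\subseteq\Sp P_1$. Applying your direct-sum observation to that basis gives $\sum_t w_t\le k$ for $w_t:=\dim\Sp P_t$, and your summation by parts gives $f(1)\le\sum_t q^{t-1}(q^{w_t}-1)$. This is exactly Proposition~\ref{prop:rank-one}, and would be a neat alternative to the paper's row-filtration proof. You would then still need the Young-diagram optimization of Lemma~\ref{lem:diagram} to conclude $f(1)\le B_n(k)\ll q^{m+n-1}$.

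\textbf{Higher rank.} Here you give no argument, and your target $f(r)\le C^r q^{r(n+m-r)}$ is false precisely in the range where it would be needed. Take $n=5$, $k=19$ (so $m=3$). Let $V$ be the matrices whose fifth row vanishes and whose fourth row lies in a fixed hyperplane $U\subseteq\FF_q^5$. Choose the first three rows to form a rank-$2$ matrix ($\asymp q^{12}$ ways), with row space $R$. Then take the fourth row in $R\cap U$, which has dimension at least $1$. This shows $f(2)\gg q^{13}$, against your $C^2q^{12}$. Meanwhile the crude bounds $q^k$ and $q^{r(2n-r)}$ only bound the $r=2$ term by $q$ and $q^{-2}$ respectively, not by $q^{m-n-1}=q^{-3}$.

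The way out is not to aim for $q^{r(m-n-1)}$ term by term, since the terms with $r\ge2$ have slack. The true bound $f(r)\le\binom{n}{r}_q q^{rk/n}$ of Proposition~\ref{prop:rank-r} has $k/n$ in place of $\lfloor k/n\rfloor$. It makes the $r$-th term $\ll q^{r(k/n-n-1)}$, which is geometric in $r$ with ratio at most $1/q$. Its $r=2$ term is at most $q^{m-n-1}$ because $2k/n\le m+n+1$. The paper proves this bound in four steps:
\begin{itemize}
\item sort rank-$r$ matrices by their row space $H$;
\item bound $\dim V(H)\le\sum_i\dim(R_i\cap H)$ using the row filtration of $V$, where $\sum_i\dim R_i=k$;
\item apply H\"older's inequality;
\item use the $q$-Vandermonde identity $\sum_{\dim H=r}q^{n\dim(R\cap H)}=\binom{n}{r}_q q^{r\dim R}$.
\end{itemize}
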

Before proving Theorem~\ref{thm:tv2} we derive a corollary.
\begin{cor}
For fixed $q$, $A_{n,k,q}$ tends to $0$ if and only if $(n^2-k)/n$ tends to $\infty$.
\end{cor}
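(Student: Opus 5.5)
We prove the two directions separately, with $q$ fixed throughout. The "if" direction comes from Theorem~\ref{thm:tv2}, and the "only if" direction from an explicit family of functionals that forces the distance to stay bounded away from $0$.

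\emph{If.} Suppose $(n^2-k)/n\to\infty$. If $k<n$, Theorem~\ref{thm:dtv} (or the bound for $1\le k\le n$ proved above) gives $A_{n,k,q}\ll q^{k/2-n}\le q^{-n/2}\to 0$. If $n\le k\le n^2$, Theorem~\ref{thm:tv2} gives $A_{n,k,q}\ll q^{(\lfloor k/n\rfloor-n-1)/2}$. Since $\lfloor k/n\rfloor\le k/n$, the exponent is at most $-\tfrac12\big((n^2-k)/n+1\big)$, which tends to $-\infty$. Hence $A_{n,k,q}\to 0$.

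\emph{Only if.} Suppose $(n^2-k)/n$ does not tend to $\infty$. Passing to a subsequence, we may assume $n^2-k\le Cn$ for a constant $C$. Set $m=\lceil k/n\rceil$, so that $n-m\le C+1$ is bounded. Choose $L_1,\ldots,L_k$ to include the $(m-1)n$ functionals $g_{i,j}$ with $1\le i\le m-1$ and $1\le j\le n$, together with $k-(m-1)n$ further entries from row $m$. It is simpler to use monotonicity: $d_{\TV}$ cannot increase under projection, and projecting onto a sub-collection of the functionals sends the uniform vector to the uniform vector. So it suffices to exhibit a sub-collection whose distance is bounded below. Take the sub-collection of entries $(g_{i,j})$ with $1\le i\le m'$ and $1\le j\le n$, where $m'=\lfloor k/n\rfloor\ge n-C-1$. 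By Theorem~\ref{thm:submatricestv} with $m_1=m'$ and $m_2=n$, its distance is $\asymp q^{\min\{m',n\}-1-n}\ge q^{-C-2}$. This is a positive constant depending only on $q$ and $C$, so $A_{n,k,q}\not\to 0$ along the subsequence.

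The only step that needs care is this lower bound. Theorem~\ref{thm:submatricestv} supplies it directly, and the monotonicity of total variation under the coordinate projection $\FF_q^k\to\FF_q^{m'n}$ is standard. So there is no real obstacle beyond checking that $k\ge m'n$, which holds because $m'=\lfloor k/n\rfloor$.
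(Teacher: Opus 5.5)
Your proof is correct and follows the paper's argument. Theorem~\ref{thm:tv2}, together with Theorem~\ref{thm:dtv} for $k<n$, gives the ``if'' direction. For the ``only if'' direction you apply Theorem~\ref{thm:submatricestv} to a full-width block of $\lfloor k/n\rfloor$ rows, while the paper uses the transposed block with $m_1=n$, $m_2=n-R$. Your explicit use of monotonicity of total variation under coordinate projection handles general $k$, not just $k=n(n-R)$, and so fills in a step the paper leaves implicit.
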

\begin{proof}
If $(n^2-k)/n\to \infty$ we apply Theorem~\ref{thm:tv2} to conclude $A_{n,k,q}$ tends to $0$. Conversely, it suffices to exhibit for any fixed positive integer $R$ an example of $n(n-R)$ functionals $(L_i)_{i=1}^{k}$ where $ d_{\TV}( (L_i(g))_{i=1}^{k},(U_i)_{i=1}^{k})$ does not tend to $0$; indeed, for $k=n(n-R)$ we have $(n^2-k)/n = R$. For such an example, apply Theorem~\ref{thm:submatricestv} with $m_1=n$ and $m_2=n-R$, which produces an example of $k= n(n-R)$ functionals with  $ d_{\TV}( (L_i(g))_{i=1}^{k},(U_i)_{i=1}^{k})\asymp q^{-R-1}$. 
\end{proof}
By Lemma~\ref{lem:dtv}, in order to prove Theorem~\ref{thm:tv2} it suffices to prove 
\[\sum_{r=1}^{n} f(r) q^{2\left(-nr+\binom{r}{2}\right)} \ll q^{\lfloor k/n\rfloor -n-1}\]
holds with an absolute constant, where $f$ is defined in \eqref{eq:deff}. This in turn follows if we show
\[ f(1) \ll q^{\lfloor k/n\rfloor +n-1}\]
and, for $2\le r\le n$,
\[ f(r) \ll q^{rk/n+r(n-r)}.\]
This follows from the next two propositions. We set
$f(r;V)=\#\{A\in V:\rank(A)=r\}$ given $V\subseteq \Mn$.
\begin{prop}\cite{Schaathun}\label{prop:rank-one}
	For every subspace $V\subseteq \Mn$ of dimension $k$, \[f(1;V)\le \max_{\substack{n\ge a_1\ge \ldots \ge a_n \ge 0\\ a_1+\ldots+a_n=k}}\sum_{i=1}^{n}q^{i-1}(q^{a_i}-1).\]
\end{prop}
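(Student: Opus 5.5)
The plan is to sort rank-one matrices by their column space and then use a greedy basis argument. Every rank-one matrix $A\in\Mn$ can be written as $A=uv^T$ with $u,v\in\FF_q^n\setminus\{0\}$, and its column space is the line $[u]\in\PP^{n-1}(\FF_q)$, which is determined by $A$. For each point $p=[u]$ I will set
\[W_p:=\{v\in \FF_q^n:\ uv^T\in V\}.\]
This is a linear subspace of $\FF_q^n$, and it depends on the choice of representative $u$ only up to scaling, so its dimension $d(p):=\dim W_p\le n$ is well defined. The rank-one matrices of $V$ with column space $p$ are exactly the $uv^T$ with $v\in W_p\setminus\{0\}$. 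Hence
\[ f(1;V)=\sum_{p\in \PP^{n-1}(\FF_q)} \big(q^{d(p)}-1\big).\]

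The key structural input is a directness statement. Let $p_1=[u_1],\ldots,p_m=[u_m]$ be points with $u_1,\ldots,u_m$ linearly independent. Then the subspaces $u_i W_{p_i}^T:=\{u_iv^T: v\in W_{p_i}\}\subseteq V$ form a direct sum. To see this, suppose $\sum_i u_iv_i^T=0$. Extend the $u_i$ to a basis and apply the dual basis functional $u_j^*$ on the left; this gives $v_j^T=0$ for every $j$. Consequently
\[\sum_{i=1}^m d(p_i)\le \dim V=k.\]

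Next I choose points $p_1,\ldots,p_n$ greedily. Take $p_1$ to maximize $d$ over all points. Given $p_1,\ldots,p_{i-1}$, take $p_i$ to maximize $d$ over the points outside $\Sp(p_1,\ldots,p_{i-1})$. Set $a_i:=d(p_i)$.

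1. The sequence $a_1\ge a_2\ge\cdots\ge a_n$ is non-increasing, because each step maximizes over a smaller set of points.
2. Since $p_1,\ldots,p_n$ are independent, the directness statement gives $\sum_i a_i\le k$.
3. The points of $\PP^{n-1}$ lying in $\Sp(p_1,\ldots,p_i)$ but not in $\Sp(p_1,\ldots,p_{i-1})$ number $(q^i-q^{i-1})/(q-1)=q^{i-1}$. Each of them has $d\le a_i$ by the maximality of $p_i$.

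Summing over these layers gives
\[f(1;V)\le \sum_{i=1}^n q^{i-1}(q^{a_i}-1).\]
Finally, I fix the constraint $\sum_i a_i=k$ in the statement. If $\sum a_i<k$, I raise $a_1$ up to $n$, then $a_2$, and so on, until the total is exactly $k$. This is possible since $k\le n^2$. It keeps the sequence non-increasing and bounded by $n$, and it only increases the right-hand side. So the bound is dominated by the maximum in the statement.

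The main obstacle I anticipate is the directness lemma combined with choosing the right ordering. A naive bound would count each point with the global maximum $d$, which is far too weak. The greedy filtration is what makes the $q^{i-1}$ weights pair correctly with the decreasing $a_i$. I also need to check carefully that at each layer the bound $d\le a_i$ applies to all points outside the previous span, not only to $p_i$ itself.
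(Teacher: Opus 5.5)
Your proof is correct, and it takes a genuinely different route from the paper's.

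\textbf{The paper's route.} The paper uses a fixed coordinate filtration. It sets $V_i$ to be the matrices in $V$ whose first $i$ rows vanish and $R_i$ the space of possible $i$th rows of $V_{i-1}$, so that $\sum_i\dim R_i=k$ exactly. It then sorts rank-one matrices by their row space $\ell$. Via a second filtration it shows $\dim X_\ell\le m_\ell:=\#\{i:\ell\subseteq R_i\}$. It finishes by expanding $q^{m_\ell}=\prod_i\bigl(1+(q-1)\mathbf{1}_{\ell\subseteq R_i}\bigr)$ and using $\dim\bigcap_{i\in I}R_i\le d_{\max I}$. This yields the bound with $a_j=\dim R_j$ sorted decreasingly.

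\textbf{Your route.} You use only one intrinsic fact: $\sum_i d(p_i)\le k$ whenever the points $p_1,\ldots,p_m$ are independent, which is your direct-sum lemma. You then make a greedy, matroid-style choice of basis. Each layer $\Sp(p_1,\ldots,p_i)\setminus\Sp(p_1,\ldots,p_{i-1})$ contains exactly $q^{i-1}$ points, and every one of them has $d\le a_i$ by maximality of $p_i$.

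\textbf{What each approach buys.} Your argument is coordinate-free and avoids the inclusion--exclusion step entirely. It actually proves the inequality for any function $d\colon\PP^{n-1}(\FF_q)\to\{0,\ldots,n\}$ with $\sum_i d(p_i)\le k$ on independent points. The cost is that your $a_i$ only satisfy $\sum_i a_i\le k$, so you need the padding step. That step is valid: raising $a_1$, then $a_2$, and so on up to $n$ keeps the sequence non-increasing and only increases the right-hand side. The paper's $a_i=\dim R_i$ sum to $k$ exactly, so no padding is needed. More importantly, its row filtration $(V_i,R_i)$ is reused directly in the proof of Proposition~\ref{prop:rank-r}, through $\dim V(H)\le\sum_i\dim(R_i\cap H)$; your greedy argument does not supply that shared framework.
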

Schaathun \cite{Schaathun} does not state his result that way; instead, it is a result on the generalized Hamming weights of product codes, essentially bounding the intersection of the Segre variety $\Sigma_{n-1,n-1}$\footnote{It is the image of the embedding of $\mathbb{P}^{n-1}(\FF_q)\times \mathbb{P}^{n-1}(\FF_q)$ in $\mathbb{P}^{n^2-1}(\FF_q)$ defined by $(x,y)\mapsto xy^T$ where $x,y$ are column vectors; the image parametrizes rank-one matrices up to multiplication by scalars.} with $\mathbb{P}^{n-1}(V)$. The upper bound in Proposition~\ref{prop:rank-one} has a closed form. Given integers $n\ge 1$, $k\ge 0$, define
\[ 
B_n(k):=\frac{(q^a-1)(q^n-1)}{q-1}+q^a(q^b-1)\]
where $a,b$ are uniquely defined by $k=an+b$, $0\le b<n$.
\begin{lem}\label{lem:diagram}
	Suppose $1\le m\le n$ and $n\ge a_1\ge\cdots\ge a_m\ge0$ are
	integers with $\sum_{i=1}^{m} a_i=k$. Then
	\[ \sum_{i=1}^m q^{i-1}(q^{a_i}-1) \le B_n(k).\]
\end{lem}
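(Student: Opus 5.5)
The plan is to recast the left-hand side as a sum over the cells of a Young diagram. I then show that the extremal diagram is the one whose rows are filled greedily to length $n$, which is exactly the diagram producing $B_n(k)$.

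\textbf{Reformulation.} Since $q^{i-1}(q^{a_i}-1)=(q-1)\sum_{c=1}^{a_i}q^{i-1+c-1}$, we have
\[\sum_{i=1}^m q^{i-1}(q^{a_i}-1)=(q-1)\sum_{(i,c)\in\lambda}q^{i+c-2},\]
where $\lambda=\{(i,c):1\le i\le m,\ 1\le c\le a_i\}$. This $\lambda$ is a Young diagram with $k$ cells, rows of length at most $n$, and at most $m\le n$ rows. So each cell carries the weight $q^{e}$ with $e=i+c-2$, its ``content-sum''.

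Now write $k=an+b$ with $0\le b<n$, and take the diagram $\lambda^*$ with $a$ full rows of length $n$ followed by one row of length $b$. Its weight is
\[(q^n-1)\sum_{i=1}^{a}q^{i-1}+q^a(q^b-1)=B_n(k),\]
so the lemma says precisely that $\lambda^*$ maximizes the weight among admissible diagrams. (If $a=n$ then $b=0$, so $\lambda^*$ still has at most $n$ rows.) Note that $B_n(k)-B_n(k-1)=(q-1)q^{e(k)}$, where $e(k)$ is the exponent of the $k$-th cell added greedily: $e(k)=a+b-1$ if $b\ge1$, and $e(k)=a+n-2$ if $b=0$.

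\textbf{Induction on $k$.} The case $k=0$ is trivial. For the inductive step, it suffices to find a removable corner $(i,a_i)$ of $\lambda$ (so $a_i>a_{i+1}$) whose exponent satisfies $i+a_i-2\le e(k)$. Deleting that corner leaves an admissible diagram with $k-1$ cells, and the inductive hypothesis then gives
\[\mathrm{wt}(\lambda)\le B_n(k-1)+(q-1)q^{e(k)}=B_n(k).\]

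\textbf{Finding the corner (main obstacle).} My first attempt is to take the corner of $\lambda$ with the smallest exponent and bound it using rectangle counting. A cell $(i,c)\in\lambda$ forces the whole rectangle $[1,i]\times[1,c]$ into $\lambda$, so $k\ge ic$. The corners of $\lambda$ form a staircase $(i_1,c_1),\dots,(i_s,c_s)$ with $i_1<\dots<i_s$ and $c_1>\dots>c_s$, and $k$ equals the area under this staircase. Suppose every corner had $i_t+c_t-2>e(k)$. Then the staircase would lie entirely above the antidiagonal $i+c=e(k)+2$, intersected with the strip $c\le n$. The area of such a region is minimized by the greedy shape and already exceeds $k$, which is a contradiction.

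The care needed is in the boundary case where rows are capped at $n$: there the antidiagonal bound must be replaced by the row-length constraint, which is exactly why $e(k)$ jumps to $a+n-2$ when $b=0$. Sample cases behave as expected. With $\lambda=(1,1,1)$ and $n\ge 3$, the only corner has exponent $2=e(3)$, so equality is allowed. With $\lambda=(2,1)$, the corners $(1,2)$ and $(2,1)$ both have exponent $1\le e(3)=2$.

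\textbf{Fallback.} If this corner argument becomes messy, I would instead prove a weak majorization statement: for every $j$, the sum of the $j$ largest cell-exponents of $\lambda$ is at most the corresponding sum for $\lambda^*$. This reduces to counting cells with $i+c\ge s$ inside an admissible diagram, and the conclusion then follows from convexity and monotonicity of $x\mapsto q^x$ via Karamata's inequality.
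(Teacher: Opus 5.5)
Your reformulation matches the paper's: cell $(i,c)$ gets weight $(q-1)q^{i+c-2}$, and $B_n(k)$ is the weight of the row-greedy diagram $\lambda^*$. Your reduction is also valid: a removable corner of exponent at most $e(k)$ would suffice. The gap is that such a corner need not exist.

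Take $n=3$ and $\lambda=(2,2)$. Then $k=4=1\cdot 3+1$, so $e(4)=a+b-1=1$, but the only removable corner is $(2,2)$, of exponent $2$. More generally, for $k=n+1$ (where $e=1$) the only admissible diagrams with a removable corner of exponent at most $1$ are $(n,1)$ and $(2,1^{n-1})$. So, for example, $(n-1,2)$ fails for every $n\ge 3$.

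Your area argument breaks exactly here. The diagram $(2,2)$ has area exactly $k$ and all its corners lie strictly above the antidiagonal $i+c=3$, so there is no contradiction. The lemma still holds for $(2,2)$, with equality: $(q^2-1)(q+1)=q^3+q^2-q-1=B_3(4)$. It holds only because $(2,1)$ has weight $2q^2-q-1$, far below $B_3(3)=q^3-1$.

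The underlying problem is that $e(k)$ is not monotone. It drops from $a+n-2$ at $k=an$ to $a$ at $k=an+1$. So the increments of $B_n$ are not nondecreasing, and a one-cell-at-a-time induction on $k$ cannot track $B_n$ without carrying slack. Your remark about the boundary case points the wrong way: the trouble occurs just \emph{after} a row is completed, where the threshold is smallest.

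The fallback does not close the gap either. The Karamata (Tomi\'c--Weyl) step is fine for the convex increasing function $x\mapsto q^x$. But the statement it needs is that, for every $s$, no admissible $k$-cell diagram has more cells with $i+c\ge s$ than $\lambda^*$. That is the entire combinatorial content of the lemma, and you give no argument for it. It is also not a routine exchange argument:
\begin{itemize}
\item There are many ties. For $n=4$, $(3,3,3)$ and $(4,4,1)$ have identical exponent multisets.
\item The natural move of pushing the last row's corner into the first non-full row can lower an exponent. For example, $(3,3,3)\to(4,3,2)$ replaces exponent $4$ by $3$.
\end{itemize}

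The paper avoids this by deleting the whole first column instead of a single cell. If the diagram has $h$ nonempty rows, then $W(\mathbf a)=q^h-1+qW(\mathbf a')$, where $\mathbf a'$ has $k-h$ cells and rows of length at most $n-1$.
\begin{itemize}
\item For $h\le n-1$, induction on $m+n$ gives $W(\mathbf a)\le q^h-1+qB_{n-1}(k-h)$. An explicit factorization shows $B_n(k)-(q^h-1+qB_{n-1}(k-h))\ge 0$, using $h\ge a$, which is forced by $k\le hn$.
\item For $h=n$, one transposes $\mathbf a'$, which then has fewer than $n$ rows, and uses $q^n-1+qB_n(k-n)=B_n(k)$.
\end{itemize}
Removing a full column at once is what absorbs the non-monotonicity of the greedy increments.
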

Denote by $\binom{n}{r}_q$ the number of $r$-dimensional subspaces of $\FF_q^n$. It satisfies $\binom{n}{r}_q =\prod_{i=1}^{n}(q^i-1)/(\prod_{i=1}^{r}(q^i-1)\prod_{i=1}^{n-r}(q^i-1)) \ll q^{r(n-r)}$.
\begin{prop}\label{prop:rank-r}
	For every subspace $V\subseteq \Mn$ of dimension $k$, and every $1\le r\le n$, $f(r;V)\le \binom{n}{r}_q q^{rk/n}$.
\end{prop}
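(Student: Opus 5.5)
The plan is to count each rank-$r$ matrix of $V$ through its kernel, and to reduce the proposition to an averaging inequality over $(n-r)$-dimensional subspaces of $\FF_q^n$.

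\textbf{Reduction to kernels.} For an $(n-r)$-dimensional subspace $K\subseteq \FF_q^n$ set
\[ V_K:=\{A\in V: AK=0\}, \]
a linear subspace of $V$. Every $A\in V$ of rank $r$ has a kernel $K=\ker A$ of dimension $n-r$, and $A\in V_K$ for exactly this $K$. Hence
\[ f(r;V)\le \sum_{\dim K=n-r}|V_K| = \binom{n}{r}_q\cdot \mathbb{E}_K\, q^{\dim V_K}, \]
where $K$ is uniform among the $\binom{n}{r}_q$ subspaces of dimension $n-r$. So it suffices to prove the averaging inequality
\begin{equation*}
\mathbb{E}_K\, q^{\dim V_K}\le q^{rk/n}.
\end{equation*}
This is sharp in two cases, which is a good sign the constant $1$ is attainable:
\begin{itemize}
\item For $V=\Mn$, $V_K$ is the space of maps $\FF_q^n/K\to\FF_q^n$, so $\dim V_K=rn=rk/n$ for every $K$.
\item For $V=\{0\}$ both sides equal $1$.
\end{itemize}

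\textbf{The averaging inequality.} I would prove it by induction along a random flag, and I expect this to be the main obstacle. Write $K$ as the last term of a uniformly random chain $\FF_q^n=K_0\supset K_1\supset\cdots\supset K_{n-r}=K$ with $\dim K_j=n-j$. Then $V_{K_j}$ consists of the matrices in $V$ vanishing on $K_j$, and the spaces $V_{K_j}$ increase in $j$. It is cleaner to work with $d_j=\dim V_{K_j}$ and to think of $V$ as a subspace of $\mathrm{Hom}(\FF_q^n,\FF_q^n)$ restricted to $K_j^\perp$-type conditions.

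The target is the stronger statement that $j\mapsto \log_q \mathbb{E}\,q^{d_{n-j}}$ is bounded by the linear interpolation between $d_n=0$ (the value at $K=\FF_q^n$) and $\log_q|V|=k$ (the value at $K=0$). In other words, I aim for a one-step Hölder/convexity estimate of Shearer type:
\begin{itemize}
\item For a fixed subspace $X$ of linear maps on an $m$-dimensional space $U$, average $|X_H|$ over hyperplanes $H\subset U$, where $X_H$ is the set of maps in $X$ vanishing on $H$.
\item Compare this average with $|X|^{1/m}$, in the form $\mathbb{E}_H|X_H|^{\,\cdot}\le \cdots$.
\item Iterate the estimate $n-r$ times, with Hölder's inequality to pass the expectation through the nesting.
\end{itemize}
The exponent $r/n$ should emerge as the product of the ratios $(m-1)/m$ over the steps, equivalently as a telescoping of the per-step exponents.

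\textbf{Fallback: an explicit formula.} If the one-step estimate is awkward, compute the expectation directly:
\[ \sum_{K}|V_K|=\sum_{A\in V}\#\{K\subseteq\ker A\}=\sum_{s=0}^{n} f(s;V)\binom{n-s}{n-r}_q . \]
This writes $\mathbb{E}_K q^{\dim V_K}$ as a $q$-binomial moment of the rank distribution of $V$. The remaining task would then be to bound this moment by $q^{rk/n}$, using the dimension constraint $\sum_s f(s;V)=q^k$ together with the analogous identity for each $V_{K'}$, and proceeding by induction on $n$ (restricting to hyperplanes).

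In either route, the single step to check carefully is the one-dimensional estimate that makes the exponent exactly $rk/n$ with no loss in the constant. The rest of the proof (the kernel reduction and the sum over $K$) is bookkeeping.
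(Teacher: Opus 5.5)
\textbf{There is a genuine gap: the averaging inequality is not proved, and your one-step estimate is false.} Your reduction to kernels is correct. It is the paper's reduction in dual form: your $V_K$ is the paper's $V(K^\perp)$, the matrices of $V$ all of whose rows lie in the $r$-dimensional space $K^\perp$. But the averaging inequality $\mathbb{E}_K q^{\dim V_K}\le q^{rk/n}$ is the whole content of the proposition, and you do not prove it. The one-step estimate is written with placeholders, and the fallback only rewrites $\sum_K|V_K|$ as a $q$-binomial moment of the rank distribution.

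The form of the one-step estimate that makes your exponents telescope to $r/n$ is $\mathbb{E}_H|X_H|\le|X|^{1/m}$, for $X=V|_{K_j}\subseteq\mathrm{Hom}(K_j,\FF_q^n)$ with $m=\dim K_j$. With it, Jensen would give $\mathbb{E}\,q^{d_{j+1}}\le q^{k/m}(\mathbb{E}\,q^{d_j})^{(m-1)/m}$, and the induction would close. However, it is false as soon as $m<n$. Take $n\ge 3$, $V=\{w\phi_0: w\in\FF_q^n\}$ for a fixed nonzero row vector $\phi_0$ (so $k=n$), and a hyperplane $K_1\ne\ker\phi_0$. Then $X=V|_{K_1}$ has dimension $n$ and $m=n-1$. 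Also $X_H=X$ for the single hyperplane $H=\ker(\phi_0|_{K_1})$ of $K_1$, and $X_H=0$ otherwise. Hence
\[\mathbb{E}_H|X_H|=1+(q-1)\frac{q^n-1}{q^{n-1}-1}>q^2-q+1>q^{3/2}\ge |X|^{1/m}.\]
Yet for this $V$ the target holds with equality: $\mathbb{E}_K q^{\dim V_K}=q^r$. So no per-step bound that sees only $\dim X$ and $\dim K_j$ can close the induction. Moreover, the first step ($m=n$) is already the $r=1$ case of the inequality you are trying to prove, so the random-flag induction does not reduce the problem.

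\textbf{The missing idea is to decompose $V$ along a fixed flag on the row side}, which records how $V$ sits in the codomain.
\begin{enumerate}
\item Let $V_i\subseteq V$ consist of the matrices whose first $i$ rows vanish, and let $R_i\subseteq\FF_q^n$ be the space of $i$th rows of elements of $V_{i-1}$. Then $\sum_i \dim R_i=k$.
\item Filtering $V_K$ the same way gives $\dim V_K\le\sum_{i=1}^n\dim(R_i\cap K^\perp)$.
\item H\"older's inequality with $n$ factors, each with exponent $n$, gives $\sum_K|V_K|\le\prod_{i=1}^n(\sum_K q^{n\dim(R_i\cap K^\perp)})^{1/n}$.
\item For a single subspace $R$ of dimension $d$, the inner sum is computed exactly, with $H=K^\perp$ ranging over $r$-dimensional subspaces: $\sum_{\dim H=r}q^{n\dim(R\cap H)}=\binom{n}{r}_q q^{rd}$. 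To see this, note there are $\binom{d}{j}_q\binom{n-d}{r-j}_q q^{(d-j)(r-j)}$ subspaces $H$ with $\dim(R\cap H)=j$, and apply the $q$-Vandermonde identity.
\item Multiplying the $n$ factors gives $\binom{n}{r}_q q^{rk/n}$.
\end{enumerate}
Here the factor $1/n$ in the exponent comes from spreading over the $n$ rows, not from a chain of $n-r$ hyperplane steps. It is the match between the H\"older exponent $n$ and the ambient dimension $n$ that makes each factor an identity with no loss. Your counterexample above is precisely an equality case of this argument, with $R_1=\cdots=R_n=\langle\phi_0\rangle$.
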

We are ready to prove Proposition~\ref{prop:rank-one}, Lemma~\ref{lem:diagram} and Proposition~\ref{prop:rank-r}. See \cite{Beelen2015,Beelen} for related bounds on $f$. 
\subsection{Proof of Proposition~\ref{prop:rank-one}}
	Let $V_i\subseteq V$ be the subspace of matrices whose first $i$ rows vanish, and let $R_i\subseteq\FF_q^n$ be the possible $i$th rows in $V_{i-1}$. The map $V_{i-1}\to R_i$ sending a matrix to its $i$th row is surjective, with kernel $V_i$. Hence
	$\dim R_i =\dim V_{i-1}-\dim V_i$, and summing over $i$ yields $\sum_{i=1}^n \dim R_i=k$. Let $d_i:=\dim R_i$.
	
	For a $1$-dimensional subspace $\ell\subseteq\FF_q^n$, fix a nonzero row vector $y_\ell \in \ell$, and put
	\[  X_\ell := \{x\in\FF_q^n:xy_\ell\in V\},  \qquad m_\ell := \#\{i:\ell\subseteq R_i\}.
	\]
	We now show 
	\begin{equation}\label{eq:dim}
		\dim X_\ell\le m_\ell. 
	\end{equation}
	For each $0\le i\le n$, define $X_{\ell,i} :=
	\{x\in X_\ell:x_1=\cdots=x_i=0\}$. Then
	$X_\ell = X_{\ell,0}\supseteq X_{\ell,1}
	\supseteq\cdots\supseteq X_{\ell,n}=\{0\}$. For each $1\le i\le n$, put
	$\delta_i := \dim X_{\ell,i-1}-\dim X_{\ell,i}$. Since $X_{\ell,i}$ is the kernel of the linear map
	$X_{\ell,i-1}\rightarrow\FF_q$ sending $x\mapsto x_i$, we have $\delta_i\in\{0,1\}$. Moreover, 
	$\dim X_\ell=\sum_{i=1}^n\delta_i$. We claim  $\delta_i \le \mathbf{1}_{\ell\subseteq R_i}$. It suffices to consider $\delta_i=1$. In this case there exists $x\in X_{\ell,i-1}$ such that $x_i\neq 0$. By definition,
	$x_1=\cdots=x_{i-1}=0$ and $xy_\ell\in V$. Thus the matrix $A=xy_\ell$ has its first $i-1$ rows equal
	to zero, so $A\in V_{i-1}$. Its $i$th row is $x_i y_\ell$,
	and hence $x_i y_\ell\in R_i$. Since $x_i\ne0$, we obtain $y_\ell\in R_i$, and therefore
	$\ell\subseteq R_i$. Consequently, $\delta_i\le \mathbf{1}_{\ell\subseteq R_i}$ and  \eqref{eq:dim} follows.
	
	There is a bijection between $X_{\ell}\setminus \{0\}$ and rank-one matrices in $V$ whose row space is $\ell$, given by $x\mapsto xy_{\ell}$. Hence 
	\[  f(1;V)=\sum_\ell(q^{\dim X_\ell}-1) \le\sum_\ell(q^{m_\ell}-1) \]
	by \eqref{eq:dim}. Reorder the $R_i$ so that $d_1\ge\cdots\ge d_n$.
	Expanding $q^{m_\ell}=\prod_i(1+(q-1)\mathbf1_{\ell\subseteq R_i})$
	and using $\dim\bigcap_{i\in I}R_i\le d_{\max I}$ gives
	\[ \sum_\ell(q^{m_\ell}-1)=\sum_{\varnothing\neq I\subseteq \{1,2,\ldots,n\}}
	(q-1)^{|I|-1}\bigl(q^{\dim\cap_{i\in I}R_i}-1\bigr)\le\sum_{j=1}^n q^{j-1}(q^{d_j}-1)\]
	by the binomial theorem. The result follows.
\subsection{Proof of Lemma~\ref{lem:diagram}}
Given a sequence $a_1\ge \cdots \ge a_m \ge 0$ we form a Young diagram $Y_{\mathbf{a}}$ with $m$ rows (some may be empty), where $a_i$ is the number of cells in its $i$th row. We equip $Y_{\mathbf{a}}$ with a weight $(q-1)q^{i+j-2}$ on cell $(i,j)$. The sum of weights in the $i$th row of $Y_{\mathbf{a}}$ is $q^{i-1}(q^{a_i}-1)$, and so the total sum of weights in $Y_{\mathbf{a}}$ agrees with $\sum_{i=1}^{m} q^{i-1}(q^{a_i}-1)$. We denote the sum of weights in  $Y_{\mathbf{a}}$ by $W(\mathbf{a})$. We wish to prove $W(\mathbf{a}) \le B_n(k)$ where $n,k$ are as in the lemma. We proceed by induction on $m+n$. The cases $m=1$ or $n=1$ are immediate, as well as $k=0$. We explain the induction step.
	
Given a diagram $Y_{\mathbf{a}}$ with exactly $h$ nonempty rows, deleting its first column leaves a diagram $Y_{\mathbf{a'}}$ with $k-h$ cells. Putting weights in it according to the aforementioned rule, we have the equality
\begin{equation}\label{eq:weightrecur}
W(\mathbf{a})=q^h-1+qW(\mathbf{a'}).
\end{equation}
Necessarily $h\le m \le n$; we may assume $h=m$. We first suppose $h\le n-1$. From \eqref{eq:weightrecur} and the induction hypothesis, $W(\mathbf{a})\le q^h-1+qB_{n-1}(k-h)$. A direct calculation gives
\[  B_n(k)-\big(q^h-1+qB_{n-1}(k-h)\big) = \begin{cases}
		(q^{a+b-h}-1)(q^h-q^{a+1}),&\text{if }h\le a+b,\\
		(q^{h-a-b}-1)(q^{n-1-h+2a+b}-q^{a+b}),&\text{if } h>a+b, 
\end{cases} \]
where recall $k=an+b$. To finish the induction when $h\le n-1$, we want the left-hand side to be nonnegative. If $h>a+b$, this identity is nonnegative since $h\le n-1$. When $h\le a+b$, this identity is nonnegative when $h\ge a+1$ or when $a+b=h$. From $k=an+b$ and $k\le hn$, $h\ge a$ must hold. If $h\ge a+1$ we are done, and otherwise $h=a$ and $b=0$, forcing $a+b=h$ and we are done as well. Finally we consider the case $h=m=n$. We transpose $Y_{\mathbf{a'}}$. The transposed diagram has the same sum of weights but less than $n$ rows, so $m+n$ has decreased and we may appeal to the induction hypothesis and conclude $W(\mathbf{a})\le q^n-1+qB_n(k-n)=B_n(k)$. 
\subsection{Proof of Proposition~\ref{prop:rank-r}}
	For an $r$-dimensional subspace $H\subseteq\FF_q^n$, set
	$V(H) := \{A\in V:\text{every row of $A$ belongs to $H$}\}$.
	Let $R_i$ and $V_i$ be linear spaces as in the proof of Proposition~\ref{prop:rank-one}. We claim
	\begin{equation}\label{eq:dimsum}
		\dim V(H)\le\sum_{i=1}^n\dim(R_i\cap H).
	\end{equation}
	Indeed, let $W_i:=V(H)\cap V_i$ ($0\le i \le n$) and define the linear map $T_i\colon W_{i-1}\to \FF_q^n$ ($1\le i \le n$) sending a matrix to its $i$th row ($1\le i \le n$). We have $\ker T_i = W_i$, and the image of $T_i$ lies in $R_i\cap H$. Thus $\dim W_{i-1}-\dim W_i\le \dim(R_i\cap H)$, yielding \eqref{eq:dimsum} upon summation over $i$.
	
	We need the following identity: for every subspace
	$R\subseteq\FF_q^n$,
	\begin{equation}\label{eq:average}
		\sum_{\dim H=r}q^{n\dim(R\cap H)}
		=\binom{n}{r}_q q^{r \dim R},
	\end{equation}
	where $\binom{n}{r}_q$ counts the $r$-dimensional subspaces of
	$\FF_q^n$, and $H$ ranges over $r$-dimensional subspaces of $\FF_q^n$. To prove it, let $d=\dim R$ and observe the number of $r$-dimensional subspaces $H\subseteq \FF_q^n$ satisfying $\dim(R\cap H)=j$ is
	\[ \binom{d}{j}_q \binom{n-d}{r-j}_q q^{(d-j)(r-j)}, \]
	by first choosing $R\cap H \subseteq R$, then the image of $H$ in
	$\FF_q^n/R$, and finally a linear map from this
	image into $R/(R\cap H)$. Consequently,
	\begin{align*}
		\sum_{\dim H=r} q^{n\dim(R\cap H)}=
		\sum_j
		\binom{d}{j}_q
		\binom{n-d}{r-j}_q
		q^{(d-j)(r-j)+nj}=
		q^{rd}
		\sum_j
		\binom{d}{j}_q
		\binom{n-d}{r-j}_q
		q^{j(n-d-r+j)}=	q^{rd}\binom{n}{r}_q,
	\end{align*}
	where the last equality is the $q$-Vandermonde identity. This is \eqref{eq:average}.
	
	Every rank-$r$ matrix in $V$ belongs to $V(H)$ for exactly one $H$, so \eqref{eq:dimsum}, H\"older's inequality and \eqref{eq:average} imply
	\[  f(r;V)\le\sum_{\dim H=r}|V(H)|\le\sum_{\dim H=r}\prod_{i=1}^nq^{\dim(R_i\cap H)}\le\prod_{i=1}^n
	\left(\sum_{\dim H=r}q^{n\dim(R_i\cap H)}\right)^{1/n} =\binom{n}{r}_q q^{rk/n},\]
	where in the last equality we used $\sum_{i=1}^{n} \dim R_i = k$ (shown in the proof of Proposition~\ref{prop:rank-one}).

\bibliographystyle{abbrv}
\bibliography{references}

\Addresses

\end{document}